\newtheorem{theorem}{Theorem}
\newtheorem{lemma}[theorem]{Lemma}
\newtheorem{remark}[theorem]{Remark}
\newtheorem{corollary}[theorem]{Corollary}
\newtheorem{proposition}[theorem]{Proposition}
\newtheorem{example}[theorem]{Example}
\newcommand{\tto}{\twoheadrightarrow}
\font\sc=rsfs10
\newcommand{\cC}{\sc\mbox{C}\hspace{1.0pt}}
\newcommand{\cI}{\sc\mbox{I}\hspace{1.0pt}}
\newcommand{\cJ}{\sc\mbox{J}\hspace{1.0pt}}
\newcommand{\cS}{\sc\mbox{S}\hspace{1.0pt}}
\newcommand{\cD}{\sc\mbox{D}\hspace{1.0pt}}
\newcommand{\cA}{\sc\mbox{A}\hspace{1.0pt}}
\newcommand{\cB}{\sc\mbox{B}\hspace{1.0pt}}
\font\scc=rsfs7
\newcommand{\ccC}{\scc\mbox{C}\hspace{1.0pt}}
\newcommand{\ccA}{\scc\mbox{A}\hspace{1.0pt}}
\newcommand{\ccJ}{\scc\mbox{J}\hspace{1.0pt}}
\font\sccc=rsfs5
\newcommand{\cccA}{\sccc\mbox{A}\hspace{1.0pt}}
\begin{document}

\title[Additive versus abelian $2$-representations]{Additive versus 
abelian $2$-representations of fiat $2$-categories}
\author{Volodymyr Mazorchuk and Vanessa Miemietz}
\date{\today}

\begin{abstract}
We study connections between additive and abelian $2$-rep\-re\-sen\-ta\-ti\-ons of fiat $2$-categories, describe 
combinatorics of $2$-categories in terms of multisemigroups and determine the annihilator of a cell $2$-representation. 
We also describe, in detail, examples of fiat $2$-categories associated to $\mathfrak{sl}_2$-categorification in 
the sense of Chuang and Rouquier, and $2$-categorical analogues of Schur algebras.
\end{abstract}

\maketitle

\section{Introduction and description of the results}\label{s0}

$2$-categories appear naturally in the study of ``categorifications'' of various classical mathematical objects,
see for example \cite{Ro0,Ro,MM}. Natural representations of $2$-categories are called $2$-representations and
represent $2$-categories using functorial actions on some classical categories, for example additive or abelian 
categories. In our previous paper \cite{MM} we defined a class of {\em fiat} $2$-categories which we believe is a 
proper $2$-categorical analogue of finite dimensional associative algebras with involution and started the project 
of understanding the $2$-representation theory of such $2$-categories. The main result of \cite{MM} is the construction
and study of what we call {\em cell $2$-representations}, which we believe is a sensible $2$-categorical analogue
of simple modules.

The present paper, on the one hand, continues and extends the study from \cite{MM} and, on the other hand, proposes
an approach to these questions from a slightly different perspective. The paper \cite{MM} mostly studies
$2$-representations of fiat categories in abelian categories. In the present paper we go back to the original point 
of view of Rouquier, see \cite{Ro}, and try to represent fiat categories using additive categories instead. The
advantage is that many constructions are simplified, the disadvantage is that there is not much one can do with
additive $2$-representations. To combine the advantages of both approaches we connect additive and abelian
categorifications via a $2$-functor, called the {\em abelianization} $2$-functor. As an immediate consequence we
obtain natural constructions for many new $2$-representations as follows: given a fiat category $\cC$ and an additive
$2$-representation $\mathbf{M}$ of $\cC$, one can abelianize $\mathbf{M}$ and then find many additive subrepresentations
inside this abelian $2$-representation by taking additive closures of any collection of objects stable (up to 
isomorphism) under the action of $\cC$.

Let us briefly describe the content of the paper. In Section~\ref{s1} we collect all necessary preliminaries
on $2$-categories, $2$-representation and decategorifications together with some classical examples.
Section~\ref{s3} describes the combinatorics of $2$-categories using the language of multisemigroups. A multisemigroup
is a ``semigroup'' with a multivalued operation. Isomorphism classes of $1$-morphisms in a $2$-category form in a
natural way a multisemigroup with respect to the operation induced by the horizontal composition. Similarly to
classical semigroup theory, on each multisemigroup one can define Green's equivalence relations which control
when two elements of a (multi)semigroup generate the same principal left-, right- or two-sided ideals. We explain
that the latter coincide with the notions of left-, right- and two-sided cells for $2$-categories as introduced in \cite{MM}.

In Section~\ref{s2} we discuss the connection between additive and abelian $2$-rep\-re\-sen\-ta\-ti\-ons as described 
above. We define principal additive $2$-representations, the abelianization $2$-functor and explain how to find
additive $2$-subrepresentations in abelian $2$-representations. As an application, we reformulate the construction
of cell $2$-representations of fiat categories from \cite{MM} using this machinery. Section~\ref{s4} is dedicated
to the comparison of various classes of $2$-representations. We show that every abelian representation of a fiat category 
is equivalent to the abelianization of the additive $2$-subrepresentation given by the subcategory of projective objects
and also extend the comparison result for cell $2$-representations from \cite{MM} to the additive setup.

In Section~\ref{s5} we study annihilators of $2$-representations, in particular, those of cell $2$-representations.
In \cite{MM} we show that cell $2$-representations of fiat $2$-categories have properties similar to those
 of simple representations of associative algebras. Here we add another property, formulated below,
which says that the image of a fiat $2$-category on a cell $2$-representation is ``simple'' in the sense that
any nontrivial $2$-ideal necessarily annihilates the cell $2$-representation. Our main result is the following:

\begin{theorem}\label{thmmain}
Let $\mathcal{J}$ be a two-sided cell in a fiat $2$-category $\cC$ and $\mathcal{L}$ be a left cell in $\mathcal{J}$.
Then the ``image'' $2$-category $\cC/\mathrm{Ker}(\mathbf{C}_{\mathcal{L}})$ of the cell 
$2$-representation $\mathbf{C}_{\mathcal{L}}$ is $\mathcal{J}$-simple.
\end{theorem}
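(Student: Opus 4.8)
The plan is to exploit the faithfulness of the cell $2$-representation on the quotient together with the adjunction $2$-morphisms provided by the fiat structure. Write $\mathcal{A}=\cC/\mathrm{Ker}(\mathbf{C}_{\mathcal{L}})$; by definition of the kernel, $\mathbf{C}_{\mathcal{L}}$ descends to an action of $\mathcal{A}$ which is faithful on $2$-morphisms, and $\mathcal{J}$-simplicity of $\mathcal{A}$ amounts to saying that every nonzero $2$-ideal $\mathcal{I}$ of $\mathcal{A}$ contains $\mathrm{id}_F$ for each $F\in\mathcal{J}$. (That $\mathrm{id}_F\neq 0$ in $\mathcal{A}$ for such $F$, so that the statement is not vacuous, is clear: $\mathbf{C}_{\mathcal{L}}(F)\neq 0$ since $\mathbf{C}_{\mathcal{L}}$ is a cell $2$-representation whose apex is $\mathcal{J}$, i.e.\ $\mathcal{J}$ is the maximal two-sided cell acting nontrivially on it.) The first thing I would record is the easy ``spreading'' observation: if $\mathrm{id}_{F_0}\in\mathcal{I}$ for a single $F_0\in\mathcal{J}$, then $\mathrm{id}_F\in\mathcal{I}$ for all $F\in\mathcal{J}$. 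Indeed $F\geq_{\mathcal{J}}F_0$, so $F$ is a direct summand of $H_1\,F_0\,H_2$ for suitable $1$-morphisms $H_1,H_2$; writing the split idempotent on $H_1\,F_0\,H_2$ that cuts out $F$ as $\iota\circ\pi$ with $\pi\circ\iota=\mathrm{id}_F$, we get $\mathrm{id}_F=\pi\circ(\mathrm{id}_{H_1}\ast\mathrm{id}_{F_0}\ast\mathrm{id}_{H_2})\circ\iota\in\mathcal{I}$. So it suffices to produce one $F_0\in\mathcal{J}$ with $\mathrm{id}_{F_0}\in\mathcal{I}$.

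Next, given a nonzero $\alpha\in\mathcal{I}$, faithfulness gives $\mathbf{C}_{\mathcal{L}}(\alpha)\neq 0$; decomposing source and target into indecomposables we may assume $\alpha\colon F\to G$ with $F,G$ indecomposable, and since $\mathbf{C}_{\mathcal{L}}(\alpha)$ is a nonzero natural transformation, its component at some indecomposable object of $\mathbf{C}_{\mathcal{L}}$ is nonzero — recall from the reformulation in Section~\ref{s2} that the indecomposables of $\mathbf{C}_{\mathcal{L}}$ are naturally labelled by the $1$-morphisms $H\in\mathcal{L}$, so this object is some $L_H$. Now I would push $\alpha$ around using the fiat adjunctions: for instance, replacing $\alpha$ by its adjunct $\widetilde{\alpha}\colon\mathbbm{1}_{\mathtt{j}}\to F^{\ast}G$ in $\cC$ (the vertical composite of the adjunction unit $\mathbbm{1}_{\mathtt{j}}\to F^{\ast}F$ with $\mathrm{id}_{F^{\ast}}\ast\alpha$) stays inside $\mathcal{I}$, because $2$-ideals are closed under horizontal composition with arbitrary $2$-morphisms and under vertical composition, and does not destroy non-vanishing under $\mathbf{C}_{\mathcal{L}}$, since passing to the adjunct is reversible (via the counit). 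Iterating such manipulations, and composing on both sides with $1$-morphisms in $\mathcal{J}$ and with adjunction units and counits, the goal is to arrive at a $2$-endomorphism $\beta\in\mathcal{I}$ of some $1$-morphism $F_0\in\mathcal{J}$ such that $\mathbf{C}_{\mathcal{L}}(\beta)$ acts by a nonzero scalar on the multiplicity-one copy of $L_{G_{\mathcal{L}}}$ inside $\mathbf{C}_{\mathcal{L}}(F_0)(L_{G_{\mathcal{L}}})$, where $G_{\mathcal{L}}\in\mathcal{L}$ is the distinguished (Duflo) $1$-morphism. Then $\mathbf{C}_{\mathcal{L}}(\beta)$ is not nilpotent, and since $F_0$ is indecomposable $\mathrm{End}_{\mathcal{A}}(F_0)$ is local, so $\beta$ is invertible and $\mathrm{id}_{F_0}=\beta^{-1}\circ\beta\in\mathcal{I}$, which finishes the proof by the spreading step.

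The hard part is exactly this reduction: converting an arbitrary nonzero component $\mathbf{C}_{\mathcal{L}}(\alpha)_{L_H}$ into a $2$-endomorphism of a $1$-morphism in $\mathcal{J}$ that survives modulo the radical of its endomorphism ring. This is where the detailed combinatorics of cell $2$-representations of fiat $2$-categories must be used: that $\mathbf{C}_{\mathcal{L}}$ is transitive with apex $\mathcal{J}$, that its indecomposables are parametrized by $\mathcal{L}$, and especially the distinguished role of the Duflo $1$-morphism $G_{\mathcal{L}}$ — the property that the adjunction morphism $\mathbbm{1}_{\mathtt{i}}\to F^{\ast}F$ (for $F\in\mathcal{L}$) factors through the summand $G_{\mathcal{L}}$ with ``multiplicity one'' — which is what ensures that the scalar produced by the manipulations is genuinely nonzero and not absorbed into the radical. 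Everything else (faithfulness, the spreading step, locality of endomorphism rings) is formal once this combinatorial core is in place.
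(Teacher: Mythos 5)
Your overall architecture (the spreading step, locality of endomorphism rings of indecomposable $1$-morphisms, faithfulness of the induced action of $\cC/\mathrm{Ker}(\mathbf{C}_{\mathcal{L}})$) is sound, and your spreading step is exactly Lemma~\ref{lem13}\eqref{lem13.1} of the paper. But the core of your argument is missing: the passage from an arbitrary nonzero $\alpha\in\mathcal{I}$ to an element $\beta\in\mathcal{I}$ which is a $2$-endomorphism of some $F_0\in\mathcal{J}$ lying outside the radical is described only as ``the goal'' of ``iterating such manipulations'', and you yourself flag it as ``the hard part''. Nothing in the proposal actually establishes that the adjunction manipulations terminate at an endomorphism whose image under $\mathbf{C}_{\mathcal{L}}$ acts by a nonzero scalar on the multiplicity-one copy of $L_{\mathrm{G}_{\mathcal{L}}}$; note that $\alpha$ may a priori live between indecomposable $1$-morphisms far from $\mathcal{J}$ (for instance $2$-endomorphisms of $\mathbbm{1}_{\mathtt{i}}$ not killed by $\mathbf{C}_{\mathcal{L}}$), and it is not clear that whiskering and taking adjuncts preserves non-vanishing all the way down to the Duflo summand. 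As it stands this is a plan, not a proof.

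It is also worth recording that the paper avoids this difficulty by a different route. It first proves abstractly (Theorem~\ref{thm12}, via a Jacobson-radical argument applied to the sum of all $2$-ideals avoiding $\{\mathrm{id}_{\mathrm{F}}:\mathrm{F}\in\mathcal{J}\}$) that there is a unique $2$-ideal $\cI$ with $\cC/\cI$ $\mathcal{J}$-simple, and then identifies $\mathrm{Ker}(\mathbf{C}_{\mathcal{L}})$ with $\cI$ by comparing $\mathbf{C}_{\mathcal{L}}$ with the cell $2$-representation of $\cA=\cC/\cI$ pulled back to $\cC$: a homomorphism between them is produced by the Yoneda lemma, and it is shown to be an equivalence by a dimension count on the spaces $\mathrm{Hom}(\mathrm{F}\,L,\mathrm{G}\,L)$ using adjunction and the multiplicity formulas of \cite{MM}, the point being that decomposition multiplicities in $\cC$ and in $\cA$ agree. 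If you want to complete your direct approach you would essentially have to reprove that comparison; the cleanest fix is to route your argument through Theorem~\ref{thm12} as the paper does.
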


Using Theorem~\ref{thmmain}, we propose an alternative construction for cell $2$-rep\-re\-sen\-ta\-ti\-ons by considering
``simple'' quotients of fiat categories associated to two-sided cells.

Finally, in Section~\ref{s6} we describe, in detail, examples of fiat $2$-categories associated to 
$\mathfrak{sl}_2$-categorification in the sense of Chuang and Rouquier (see \cite{CR}), and also  $2$-categorical 
analogues of Schur algebras. We show that both of these examples have particularly nice combinatorial properties
and hence all of our results from \cite{MM} and the present paper are fully applicable. We complete the paper with 
one general construction for fiat categories inspired by the restriction of projective functors to parabolic 
blocks of the BGG category $\mathcal{O}$.

\vspace{5mm}

\noindent
{\bf Acknowledgment.} A substantial part of the paper was written during the visit, supported by ERC grant PERG07-GA-2010-268109, of the first author to the University of East Anglia, whose hospitality is gratefully acknowledged.
The first author is also partially supported by the Swedish Research Council and the Royal Swedish Academy of Sciences.

\section{$2$-categories and $2$-representations}\label{s1}

\subsection{$2$-categories}\label{s1.1}

If not stated otherwise, all categories considered in this paper are assumed to be small.  We denote by $\mathbf{Cat}$
the locally small category of small categories, which is monoidal with respect to the cartesian product. By a 
{\em $2$-category} we mean a category enriched over $\mathbf{Cat}$. In other words, a $2$-category $\cC$ consists
of the following data: a set $\cC$ of objects; for every $\mathtt{i},\mathtt{j}\in\cC$ a small category
$\cC(\mathtt{i},\mathtt{j})$ of morphisms from $\mathtt{i}$ to $\mathtt{j}$ (objects of $\cC(\mathtt{i},\mathtt{j})$
are called {\em $1$-morphisms} of $\cC$ while morphisms of $\cC(\mathtt{i},\mathtt{j})$
are called {\em $2$-morphisms} of $\cC$); the identity $1$-morphism $\mathbbm{1}_{\mathtt{i}}\in 
\cC(\mathtt{i},\mathtt{i})$ for all $\mathtt{i}$; and functorial compositions
$\cC(\mathtt{j},\mathtt{k})\times \cC(\mathtt{i},\mathtt{j})\to \cC(\mathtt{i},\mathtt{k})$; which satisfy the
strict versions of the usual axioms.

We retain the notational conventions from \cite{MM}. For a $2$-category $\cC$, objects of $\cC$ will be denoted by 
$\mathtt{i},\mathtt{j}$ and so on; $1$-morphisms of $\cC$ will be called $\mathrm{F},\mathrm{G}$ and so on;
$2$-morphisms of $\cC$ will be written $\alpha,\beta$ and so on. The identity $2$-endomorphism of a
$1$-morphism $\mathrm{F}$ will be denoted $\mathrm{id}_{\mathrm{F}}$. Composition of $1$-morphisms will be denoted 
by $\circ$, horizontal composition of $2$-morphisms will be denoted by $\circ_0$ and vertical composition of 
$2$-morphisms will be denoted by $\circ_1$. We often abbreviate $\mathrm{id}_{\mathrm{F}}\circ_0\alpha$ and 
$\alpha\circ_0\mathrm{id}_{\mathrm{F}}$ by $\mathrm{F}(\alpha)$ and $\alpha_{\mathrm{F}}$, respectively.
To avoid the cumbersome notation $\cC(\mathtt{i},\mathtt{j})(\mathrm{F},\mathrm{G})$ we will write
$\mathrm{Hom}_{\cC(\mathtt{i},\mathtt{j})}(\mathrm{F},\mathrm{G})$ instead.

\subsection{Fiat categories}\label{s1.2}

Let $\Bbbk$ be a field. A $2$-category $\cC$ is called {\em fiat} (over $\Bbbk$) provided that the following
conditions are satisfied:
\begin{itemize}
\item $\cC$ has finitely many objects;
\item every $\cC(\mathtt{i},\mathtt{j})$ is a fully additive $\Bbbk$-linear category with finitely many 
indecomposable objects up to isomorphism;
\item horizontal compositions are additive and $\Bbbk$-linear;
\item every $\Bbbk$-linear space $\mathrm{Hom}_{\cC(\mathtt{i},\mathtt{j})}(\mathrm{F},\mathrm{G})$ is finite dimensional;
\item all $1$-morphisms $\mathbbm{1}_{\mathtt{i}}$ are indecomposable;
\item $\cC$ has a weak object preserving involutive anti-autoequivalence $*$;
\item $\cC$ has {\em adjunction morphisms}, that is for any $\mathtt{i}, \mathtt{j}\in\cC$ and any $1$-morphism
$\mathrm{F}\in\cC(\mathtt{i},\mathtt{j})$ there exist  $2$-morphisms $\alpha:\mathrm{F}\circ\mathrm{F}^*\to
\mathbbm{1}_{\mathtt{j}}$ and $\beta:\mathbbm{1}_{\mathtt{i}}\to \mathrm{F}^*\circ\mathrm{F}$ such that 
$\alpha_{\mathrm{F}}\circ_1\mathrm{F}(\beta)=\mathrm{id}_{\mathrm{F}}$ and
$\mathrm{F}^*(\alpha)\circ_1\beta_{\mathrm{F}^*}=\mathrm{id}_{\mathrm{F}^*}$.
\end{itemize}

\begin{example}\label{ex1}
{\rm 
Let $A=A_1\oplus A_2\oplus\dots\oplus A_k$ be a finite dimensional associative $\Bbbk$-algebra such that 
each $A_i$ is connected and not simple and the $A_i$ are pairwise non-isomorphic. Assume further that
every $A_i$ is weakly symmetric (i.e. it is self-injective and the top of each indecomposable projective is
isomorphic to the socle). For $i=1,\dots,k$ fix some small category $\mathcal{C}_i$, equivalent to
$A_i\text{-}\mathrm{mod}$. A {\em projective functor} from $\mathcal{C}_i$ to $\mathcal{C}_j$ is a functor 
isomorphic to tensoring with a projective $A_j\text{-}A_i$-bimodule. Let $\cC_{A}$ be the $2$-category with $k$ 
objects $\mathcal{C}_i$, $i=1,\dots,k$; $1$-morphisms being all projective functors between objects together with
all endofunctors of objects isomorphic to the corresponding identity functors; and $2$-morphisms being all
natural transformations of functors. Then $\cC_{A}$ is a fiat category, see \cite[Subsection~7.3]{MM}.
} 
\end{example}

\begin{example}\label{ex2}
{\rm 
Let $\mathfrak{g}$ be a semi-simple complex finite dimensional Lie algebra with a fixed triangular
decomposition $\mathfrak{g}=\mathfrak{n}_-\oplus \mathfrak{h}\oplus \mathfrak{n}_+$, let $W$ be the Weyl
group of $\mathfrak{g}$ and $\mathtt{C}_W$ the coinvariant algebra associated to the action of $W$ on
$\mathfrak{h}^*$. For a simple reflection $s\in W$ let $\mathtt{C}_W^s$ denote the subalgebra of $s$-invariants in
$\mathtt{C}_W$. For $w\in W$ fix some reduced expression $w=s_1\cdot s_2\cdot \cdots\cdot  s_k$ and define the 
$\mathtt{C}_W\text{-}\mathtt{C}_W$-bimodule 
\begin{displaymath}
\hat{B}_w:=\mathtt{C}_W\otimes_{\mathtt{C}_W^{s_1}}\mathtt{C}_W\otimes_{\mathtt{C}_W^{s_2}}\cdots 
\otimes_{\mathtt{C}_W^{s_k}}\mathtt{C}_W. 
\end{displaymath}
Define $B_e:=\mathtt{C}_W$, and for $w\neq e$ define $B_w$ inductively (with respect to the length of $w$) as the 
unique indecomposable direct summand of $\hat{B}_w$ that is not isomorphic to something already defined.
The $\mathtt{C}_W\text{-}\mathtt{C}_W$-bimodule $B_w$ is called {\em Soergel bimodule} associated with $w$ 
(see \cite{So}). Up to isomorphism it does not depend on the choice of  a reduced expression for $w$.
Let $\mathcal{A}$ be some small category equivalent to $\mathtt{C}_W\text{-}\mathrm{mod}$.
Let $\cS_{\mathfrak{g}}$ be the $2$-category with one object $\mathcal{A}$; $1$-morphisms being all endofunctors 
of $\mathcal{A}$ isomorphic to tensoring with direct sums of Soergel bimodules; and $2$-morphisms being all
natural transformations of functors. Then $\cS_{\mathfrak{g}}$ is a fiat category, see \cite[Subsection~7.1]{MM},
the so-called {\em $2$-category of Soergel bimodules}.
} 
\end{example}

From now on, if not explicitly stated otherwise, $\cC$ is always assumed to be a fiat category.

\subsection{$2$-representations}\label{s1.3}

Denote by $\mathfrak{A}_{\Bbbk}$ the locally small $2$-subcategory of $\mathbf{Cat}$ defined as follows: objects of 
$\mathfrak{A}_{\Bbbk}$ are small fully additive $\Bbbk$-linear categories, $1$-morphisms are  additive $\Bbbk$-linear
functors and $2$-morphisms are natural transformations of functors. Denote by $\mathfrak{A}_{\Bbbk}^{f}$ the full 
subcategory of $\mathfrak{A}_{\Bbbk}$ containing all small fully additive $\Bbbk$-linear categories which have a finite 
number of indecomposable objects up to isomorphism. Finally, denote by $\mathfrak{R}_{\Bbbk}$ the full subcategory of 
$\mathfrak{A}_{\Bbbk}$ containing all small fully additive $\Bbbk$-linear categories that are equivalent to 
module categories of finite dimensional associative $\Bbbk$-algebras.

Given a $2$-category $\cC$, a {\em $2$-representation} of $\cC$ is a $2$-functor from $\cC$ to one of the $2$-categories
$\mathfrak{A}_{\Bbbk}$, $\mathfrak{A}_{\Bbbk}^f$ or $\mathfrak{R}_{\Bbbk}$. Representations of $\cC$ in
$\mathfrak{A}_{\Bbbk}$, $\mathfrak{A}_{\Bbbk}^f$ or $\mathfrak{R}_{\Bbbk}$ are called {\em additive}, {\em finitary} 
and {\em abelian} (or of additive, finitary or abelian {\em type}), respectively. Together with 
$2$-natural transformations and modifications, additive, finitary
and abelian representations of $\cC$ form $2$-categories $\cC\text{-}\mathrm{amod}$, $\cC\text{-}\mathrm{afmod}$
and $\cC\text{-}\mathrm{mod}$, respectively. We refer the reader to \cite{McL,Ma,MM} for more details on
$2$-functors, $2$-natural transformations and modifications.

\begin{example}\label{ex3}
{\rm  
Both the category $\cC_{A}$ from Example~\ref{ex1} and the category $\cS_{\mathfrak{g}}$ from Example~\ref{ex2}
are defined via the corresponding {\em defining representation} (given by the identity maps on all ingredients).
}
\end{example}

\subsection{Decategorification}\label{s1.4}

The {\em decategorification} of a fiat category $\cC$ is a category (i.e. a $1$-category) $[\cC]$ defined as follows: 
$[\cC]$ has the same objects as $\cC$; for $\mathtt{i},\mathtt{j}\in [\cC]$ the morphism space 
$[\cC](\mathtt{i},\mathtt{j})$ is defined as the split Grothendieck group $[\cC(\mathtt{i},\mathtt{j})]$ of the 
additive category $\cC(\mathtt{i},\mathtt{j})$; the composition in $[\cC]$ is induced by the composition in $\cC$
(note that the latter is biadditive). 

Given an additive $2$-representations $\mathbf{M}$ of $\cC$, the {\em decategorification} of $\mathbf{M}$
is a functor $[\mathbf{M}]$ from $[\cC]$ to the category $\mathbf{Ab}$ of abelian groups defined as follows:
for $\mathtt{i}\in[\cC]$ the abelian group $[\mathbf{M}](\mathtt{i})$ is defined as the split Grothendieck
group of $\mathbf{M}(\mathtt{i})$ and for any $1$-morphism $\mathrm{F}\in\cC(\mathtt{i},\mathtt{j})$, the
action of the class $[\mathrm{F}]\in[\cC(\mathtt{i},\mathtt{j})]$ on $[\mathbf{M}](\mathtt{i})$ is induced
by the functorial action of $\mathrm{F}$ on $\mathbf{M}(\mathtt{i})$ (note that the latter functor is additive).
In this way, the decategorification of a $2$-representation of $\cC$ is a representation of $[\cC]$.
This gives rise to a functor from the category of additive $2$-representations of $\cC$ (with modifications forgotten) 
to the category of representations of $[\cC]$ in $\mathbf{Ab}$.

As $\cC$ is fiat, every $1$-morphism of $\cC$ acts as a exact functor on any abelian $2$-representation of $\cC$.
Hence one similarly defines the decategorification of an abelian $2$-representation of $\cC$ using the
usual Grothendieck group of an abelian category.

\subsection{Equivalent $2$-representations}\label{s1.5}

Two $2$-representations $\mathbf{M}$ and $\mathbf{N}$ of $\cC$ are called {\em elementary equivalent} provided that
there exists either a $2$-natural transformation $\Phi:\mathbf{M}\to \mathbf{N}$ or
a $2$-natural transformation $\Phi:\mathbf{N}\to \mathbf{M}$ such that $\Phi_{\mathtt{i}}$
is an equivalence for any $\mathtt{i}\in\cC$.  Two $2$-representations $\mathbf{M}$ and $\mathbf{N}$ of $\cC$ 
are called {\em equivalent} provided that there is a sequence $\mathbf{M}_1,\mathbf{M}_2,\dots,\mathbf{M}_k$
of $2$-representations of $\cC$ such that  $\mathbf{M}_1=\mathbf{M}$, $\mathbf{M}_k=\mathbf{N}$ and
for any $i=1,2,\dots, k-1$ the $2$-representations $\mathbf{M}_i$ and $\mathbf{M}_{i+1}$ are
elementary equivalent. Note that equivalent $2$-representations of $\cC$ descend
to isomorphic representations of $[\cC]$.

Two $2$-representations $\mathbf{M}$ and $\mathbf{N}$ of $\cC$ are called {\em isomorphic} provided that
there exists either a $2$-natural transformation $\Phi:\mathbf{M}\to \mathbf{N}$ such that $\Phi_{\mathtt{i}}$
is an isomorphism for any $\mathtt{i}\in\cC$. In this case the inverse isomorphism $\Phi^{-1}:\mathbf{N}\to \mathbf{M}$
is a $2$-natural transformation as well.

\section{Combinatorics of additive $2$-categories}\label{s3}

Before coming to $2$-representations, we first address the question of internal structure of $2$-categories.
In this section we describe the multisemigroup approach to the combinatorics of additive $\Bbbk$-linear 
$2$-categories. This is inspired by \cite{Vi}.

\subsection{Multisemigroups}\label{s3.1}

For a set $X$ we will denote by $\mathcal{B}(X)$ the set of all subsets of $X$ (the Boolean of $X$). For
$x\in X$ we will identify the element $x$ with the subset $\{x\}\in \mathcal{B}(X)$.

A {\em multisemigroup} is a pair $(S,*)$ consisting of a set $S$ and a multivalued operation 
$*:S\times S\to \mathcal{B}(S)$ (written $(a,b)\mapsto a*b$), which is assumed to be {\em associative} in the 
sense that for all $a,b,c\in S$ one has
\begin{displaymath}
\bigcup_{t\in a*b} t*c = \bigcup_{s\in b*c} a*s.
\end{displaymath}
For example, any semigroup is a multisemigroup. Many other examples of multisemigroup can be found in \cite{Vi},
we just give one more in Example~\ref{ex5} below.

A {\em unit} of a multisemigroup $(S,*)$ is an element $e\in S$ such that $a*e=e*a=a$. If a multisemigroup has
a unit, this unit is unique. If a multisemigroup $(S,*)$ does not have a unit, one can formally adjoin it by
considering the multisemigroup $(S^1,\circledast)$, where $S^1=S\cup\{1\}$ for some $1\not\in S$, and defining
the operation $\circledast$ for $a,b\in S^1$ as follows:
\begin{displaymath}
a \circledast b=\begin{cases}a,&b=1;\\b,&a=1;\\a*b,&\text{otherwise}.\end{cases}
\end{displaymath}

\begin{example}\label{ex5}
{\em 
Consider the set $\mathbb{Z}_+=\{0,1,2,3,\dots\}$ of all non-negative integers and for $m,n\in \mathbb{Z}_+$ 
define 
\begin{displaymath}
m\diamond n=\{x\in \mathbb{Z}_+: |m-n|\leq x\leq m+n,\,\, x\equiv m+n\,\mathrm{mod}\, 2\}.
\end{displaymath}
This makes $(\mathbb{Z}_+,\diamond )$ into a multisemigroup. The element $0$ is the unit 
element of $(\mathbb{Z}_+,\diamond)$.
} 
\end{example}

Any multisemigroup $(S,*)$ induces a semigroup structure on $\mathcal{B}(S)$ by extending the operation $*$ to the
whole of $\mathcal{B}(S)$ as follows (here $X,Y\in \mathcal{B}(S)$):
\begin{displaymath}
X*Y =\bigcup_{x\in X,\,y\in Y} x*y.
\end{displaymath}
In what follows we will often use this extension of $*$.

\subsection{Green's relations in multisemigroups}\label{s3.2}

Here we define analogues of the classical Green's relations for semigroups (see \cite{Gr}) in our more general
setup of multisemigroups. Let $(S,*)$ be a multisemigroup. For $a,b\in S$ we will write $a\leq_L b$, 
$a\leq_R b$ or $a\leq_J b$ in the cases $S^1*b\subset S^1*a$, $b*S^1\subset a*S^1$ and $S^1*b*S^1\subset S^1*a*S^1$, 
respectively. The partial pre-order relations $\leq_L$, $\leq_R$ and $\leq_J$ will be called the {\em left}, 
{\em right} and {\em two-sided} {\em orders}, respectively. Equivalence classes 
$\sim_L$, $\sim_R$ and $\sim_J$ of these relations will be called 
the {\em left}, {\em right} and  {\em two-sided} {\em cells}, respectively. If $S$ is a semigroup, then left, right
and two-sided cells coincide with Green's $\mathcal{L}$-, $\mathcal{R}$- and $\mathcal{J}$-classes, respectively. 

\begin{example}\label{ex6}
{\em 
For the multisemigroup $(\mathbb{Z}_+,\diamond)$ in Example~\ref{ex5} the whole multisemigroup is the unique cell,
which is both left, right and two-sided at the same time.
} 
\end{example}

\subsection{Multisemigroup of a $2$-category}\label{s3.3}

For this subsection we just assume that $\cC$ is small and fully additive (that is $\cC(\mathtt{i},\mathtt{j})$ 
is additive for all $\mathtt{i}$ and $\mathtt{j}$) and that the composition bifunctor is biadditive. Denote by 
$\mathcal{S}(\cC)$ the set of isomorphism classes of indecomposable $1$-morphisms in $\cC$ with an element $0$ added. 
For indecomposable $1$-morphisms $\mathrm{F}$ and $\mathrm{G}$ let $[\mathrm{F}]$ and $[\mathrm{G}]$ denote 
the corresponding classes in $\mathcal{S}(\cC)$ and set
\begin{displaymath}
[\mathrm{F}]*[\mathrm{G}]=
\begin{cases}
0, & \mathrm{F}\circ \mathrm{G} \text{ is undefined};\\
0, & \mathrm{F}\circ \mathrm{G}=0;\\
\{[\mathrm{H}]\in\mathcal{S}(\cC) : \mathrm{H}\text{ is a summand of }\mathrm{F}\circ \mathrm{G}\}, & \text{ otherwise}.
\end{cases}
\end{displaymath}
Associativity of horizontal composition in $\cC$ implies that $\mathcal{S}(\cC)$ becomes a multisemigroup,
which we will call the {\em multisemigroup} of $\cC$. In case $[\mathrm{F}]*[\mathrm{G}]\neq 0$ for all $[\mathrm{F}]$
and $[\mathrm{G}]$ one can consider the multisubsemigroup $\mathcal{S}'(\cC):=\mathcal{S}(\cC)\setminus\{0\}$ 
of $\mathcal{S}'(\cC)$, which we will call the {\em reduced multisemigroup} of $\cC$.

\begin{example}\label{ex7}
{\em 
Let $\cC_{\mathfrak{sl}_2}$ denote the $2$-category with one object $\mathtt{i}$ such that 
$\cC_{\mathfrak{sl}_2}(\mathtt{i},\mathtt{i})$ is a small category equivalent to the category of finite dimensional
$\mathfrak{sl}_2(\mathbb{C})$-modules and such that the composition in $\cC_{\mathfrak{sl}_2}$ is induced by the 
tensor product of $\mathfrak{sl}_2(\mathbb{C})$-modules. Mapping an $n+1$-dimensional simple
$\mathfrak{sl}_2(\mathbb{C})$-module to $n$ identifies $\mathcal{S}'(\cC)$ with $\mathbb{Z}_+$. 
The classical Clebsh-Gordon rule for $\mathfrak{sl}_2(\mathbb{C})$ implies that $\mathcal{S}'(\cC)$ is
isomorphic to the multisemigroup $(\mathbb{Z}_+,\diamond)$ from Example~\ref{ex5}.
} 
\end{example}

\begin{remark}\label{rem8}
{\rm
One can also consider the natural generalization of multisemigroups using multisets (such objects perhaps should be 
then called {\em multimultisemigroups}). Similarly to the above one then defines the multimultisemigroup 
$\mathcal{MS}(\cC)$ for a small and fully additive $2$-category $\cC$. This multimultisemigroup then contains
complete information about the combinatorics of composition in $\cC$, including all multiplicities in the direct sum
decomposition of the composition of two $1$-morphisms. The multisemigroup $\mathcal{S}(\cC)$ contains only a 
``shadow'' of this information, saying only which direct summands appear in the direct sum decomposition of the 
composition of two $1$-morphisms.
} 
\end{remark}

\section{Additive and abelian $2$-representations}\label{s2}

Let $\cC$ be a fiat category. In \cite{MM} we worked with abelian $2$-representations of $\cC$. In the present paper we
would like to slightly change the approach and start with additive $2$-representations (as was originally suggested
in \cite{Ro}). We also formulate a direct $2$-functorial link between these two classes of representations.

\subsection{Principal additive $2$-representations}\label{s2.1}

For $\mathtt{i}\in\cC$ let $\mathbb{P}_{\mathtt{i}}:\cC\to \mathfrak{A}_{\Bbbk}$ denote the $\mathtt{i}$-th
{\em principal additive} $2$-representation $\cC(\mathtt{i},{}_-)$. This $2$-representation assigns to each 
$\mathtt{j}\in\cC$ the category $\cC(\mathtt{i},\mathtt{j})$, to each $1$-morphism $\mathrm{F}$ the functor given by 
the horizontal composition with $\mathrm{F}$, and to each $2$-morphism $\alpha$ the natural transformation given by 
the horizontal composition with $\alpha$. This $2$-representation is finitary as $\cC$ is fiat.

\begin{lemma}[Yoneda lemma]\label{lem4}
For any $\mathbf{M}\in \cC\text{-}\mathrm{amod}$ we have an isomorphism
\begin{displaymath}
\mathrm{Hom}_{\cC\text{-}\mathrm{amod}} (\mathbb{P}_{\mathtt{i}},\mathbf{M})=\mathbf{M}(\mathtt{i}).
\end{displaymath}
\end{lemma}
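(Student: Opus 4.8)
The plan is to mimic the classical proof of the Yoneda lemma, keeping careful track of the fact that we are working with $2$-functors into $\mathfrak{A}_{\Bbbk}$ rather than ordinary functors into $\mathbf{Set}$, so that every construction must be additive, $\Bbbk$-linear, and compatible with $2$-morphisms. First I would construct the comparison map in one direction: given a $2$-natural transformation $\Phi:\mathbb{P}_{\mathtt{i}}\to\mathbf{M}$, the component $\Phi_{\mathtt{i}}:\cC(\mathtt{i},\mathtt{i})\to\mathbf{M}(\mathtt{i})$ is an additive $\Bbbk$-linear functor, and I would send $\Phi$ to the object $\Phi_{\mathtt{i}}(\mathbbm{1}_{\mathtt{i}})\in\mathbf{M}(\mathtt{i})$. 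In the other direction, given an object $X\in\mathbf{M}(\mathtt{i})$, I would build a $2$-natural transformation $\Psi^X$ whose component at $\mathtt{j}\in\cC$ is the functor $\cC(\mathtt{i},\mathtt{j})\to\mathbf{M}(\mathtt{j})$ sending a $1$-morphism $\mathrm{F}$ to $\mathbf{M}(\mathrm{F})\,X$ and a $2$-morphism $\alpha:\mathrm{F}\to\mathrm{G}$ to the morphism $\mathbf{M}(\alpha)_X:\mathbf{M}(\mathrm{F})\,X\to\mathbf{M}(\mathrm{G})\,X$ in $\mathbf{M}(\mathtt{j})$.

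The key steps are then: (1) check that $\Psi^X$ is well defined on objects and morphisms of each $\cC(\mathtt{i},\mathtt{j})$ and is an additive $\Bbbk$-linear functor — this follows from $\mathbf{M}$ being a $2$-functor with additive, $\Bbbk$-linear component functors and from horizontal composition being biadditive; (2) verify the $2$-naturality squares for $\Psi^X$, i.e. for every $1$-morphism $\mathrm{G}\in\cC(\mathtt{j},\mathtt{k})$ the diagram relating $\mathbf{M}(\mathrm{G})\circ\Psi^X_{\mathtt{j}}$ and $\Psi^X_{\mathtt{k}}\circ(\mathrm{G}\circ{}_-)$ commutes, which reduces to the associativity/coherence of the $2$-functor $\mathbf{M}$ applied to $\mathbf{M}(\mathrm{G}\circ\mathrm{F})\,X\cong\mathbf{M}(\mathrm{G})\,\mathbf{M}(\mathrm{F})\,X$, and similarly compatibility with $2$-morphisms; (3) check that the two assignments $\Phi\mapsto\Phi_{\mathtt{i}}(\mathbbm{1}_{\mathtt{i}})$ and $X\mapsto\Psi^X$ are mutually inverse. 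For the composite $X\mapsto\Psi^X\mapsto\Psi^X_{\mathtt{i}}(\mathbbm{1}_{\mathtt{i}})=\mathbf{M}(\mathbbm{1}_{\mathtt{i}})\,X$, I would use that $\mathbf{M}(\mathbbm{1}_{\mathtt{i}})$ is (coherently isomorphic to) the identity functor on $\mathbf{M}(\mathtt{i})$, so this is naturally isomorphic to $X$; for the other composite $\Phi\mapsto X:=\Phi_{\mathtt{i}}(\mathbbm{1}_{\mathtt{i}})\mapsto\Psi^X$, I would show $\Psi^X=\Phi$ by evaluating the $2$-naturality of $\Phi$ on the square for the $1$-morphism $\mathrm{F}$ acting on the object $\mathbbm{1}_{\mathtt{i}}$: this gives $\Phi_{\mathtt{j}}(\mathrm{F})=\Phi_{\mathtt{j}}(\mathrm{F}\circ\mathbbm{1}_{\mathtt{i}})\cong\mathbf{M}(\mathrm{F})\,\Phi_{\mathtt{i}}(\mathbbm{1}_{\mathtt{i}})=\Psi^X_{\mathtt{j}}(\mathrm{F})$, and a parallel argument handles $2$-morphisms; one then checks these identifications are themselves natural and compatible with the $2$-categorical structure, so the two constructions are inverse bijections (in fact natural in $\mathbf{M}$).

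The main obstacle I anticipate is purely bookkeeping rather than conceptual: in a genuinely strict $2$-category the identity $1$-morphism composes strictly, so $\mathrm{F}\circ\mathbbm{1}_{\mathtt{i}}=\mathrm{F}$ on the nose and $\mathbf{M}(\mathbbm{1}_{\mathtt{i}})$ is the identity functor, which makes all the ``$\cong$'' above into genuine equalities and the argument essentially formal; but since $\mathbf{M}$ is only required to be a $2$-functor (and the paper's conventions about strictness of $2$-functors should be pinned down), one must be careful that the isomorphism in the statement is an isomorphism of abelian groups / of the relevant hom-categories and track the coherence isomorphisms $\mathbf{M}(\mathrm{G})\mathbf{M}(\mathrm{F})\cong\mathbf{M}(\mathrm{G}\circ\mathrm{F})$ through steps (2) and (3). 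I would therefore state at the outset whether $2$-functors here are taken to be strict; if so, the proof is a direct transcription of the classical Yoneda argument with ``$\mathrm{Set}$'' replaced by ``additive $\Bbbk$-linear categories'' and ``natural transformation'' replaced by ``$2$-natural transformation'', and the only thing to emphasize is $\Bbbk$-linearity and additivity of all the functors involved, which are immediate from the fiat hypotheses on $\cC$ and the definition of $\mathfrak{A}_{\Bbbk}$.
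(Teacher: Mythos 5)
Your object-level construction is essentially the paper's: the homomorphism attached to $X\in\mathbf{M}(\mathtt{i})$ sends $\mathrm{F}$ to $\mathrm{F}\,X$ and $\alpha$ to $\alpha_X$, and its uniqueness follows by evaluating $2$-naturality at $\mathbbm{1}_{\mathtt{i}}$. Your concern about coherence isomorphisms is moot in this setting: the $2$-functors here are strict, so $\mathrm{F}\circ\mathbbm{1}_{\mathtt{i}}=\mathrm{F}$ and $\mathbf{M}(\mathbbm{1}_{\mathtt{i}})$ is the identity on the nose, exactly as you anticipate in your final paragraph.

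There is, however, a genuine gap: you prove only half of the statement. Since $\cC\text{-}\mathrm{amod}$ is a $2$-category whose $2$-morphisms are modifications, $\mathrm{Hom}_{\cC\text{-}\mathrm{amod}}(\mathbb{P}_{\mathtt{i}},\mathbf{M})$ is a \emph{category} --- its objects are the $2$-natural transformations $\mathbb{P}_{\mathtt{i}}\to\mathbf{M}$ and its morphisms are the modifications between them --- and the lemma asserts an isomorphism of categories with $\mathbf{M}(\mathtt{i})$. Your argument identifies the objects on the two sides but never addresses morphisms. The missing step, which is the second half of the paper's proof and where the actual Yoneda computation occurs, is this: given $\tau\in\mathrm{Hom}_{\mathbf{M}(\mathtt{i})}(M,N)$, the assignment $\mathrm{F}\mapsto\mathrm{F}(\tau)$ defines a modification $\Phi_M\to\Phi_N$; and conversely, any modification $\xi:\Phi_M\to\Phi_N$ is determined by $\tau:=\xi(\mathbbm{1}_{\mathtt{i}})$, because the modification axiom forces
\begin{displaymath}
\xi(\mathrm{F})=\xi(\mathrm{F}\,\mathbbm{1}_{\mathtt{i}})=\mathrm{F}(\xi(\mathbbm{1}_{\mathtt{i}}))=\mathrm{F}(\tau).
\end{displaymath}
Without this correspondence between morphisms of $\mathbf{M}(\mathtt{i})$ and modifications, the claimed isomorphism is not established. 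Adding that paragraph would bring your proof in line with the paper's.
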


\begin{proof}
Given $M\in \mathbf{M}(\mathtt{i})$ there is a unique homomorphism $\Phi_M:\mathbb{P}_{\mathtt{i}}\to \mathbf{M}$ such 
that $\Phi_M(\mathbbm{1}_{\mathtt{i}})=M$, namely the one sending $\mathrm{F}\in \cC(\mathtt{i},\mathtt{j})$ to
$\mathrm{F}\, M$ and $\alpha:\mathrm{F}\to \mathrm{F}'$ to $\alpha_M$.

Given $M,N\in \mathbf{M}(\mathtt{i})$ and $\tau\in\mathrm{Hom}_{\mathbf{M}(\mathtt{i})}(M,N)$, mapping 
$\mathrm{F}\in \cC(\mathtt{i},\mathtt{j})$ to $\mathrm{F}(\tau)$ defines a modification from $\Phi_M$ to $\Phi_N$. 
On the other hand, given a modification $\xi:\Phi_M\to \Phi_N$, let $\tau=\xi(\mathbbm{1}_{\mathtt{i}})$. Then
for any $\mathrm{F}\in\cC(\mathtt{i},\mathtt{j})$ the axioms of modification imply 
\begin{displaymath}
\mathrm{F}(\tau)= \mathrm{F}(\xi(\mathbbm{1}_{\mathtt{i}}))=
\xi(\mathrm{F}\,\mathbbm{1}_{\mathtt{i}})=\xi(\mathrm{F})
\end{displaymath}
and hence $\xi$ is uniquely determined by $\tau$. The claim follows.
\end{proof}

\subsection{The abelianization $2$-functor}\label{s2.2}

Given $\mathbf{M}\in\cC\text{-}\mathrm{afmod}$ we define a new abelian $2$-representation $\overline{\mathbf{M}}$
of $\cC$ in the following way. For $\mathtt{i}\in\cC$ define the category $\overline{\mathbf{M}}(\mathtt{i})$ as
follows: objects of $\overline{\mathbf{M}}(\mathtt{i})$ are all diagrams of the form $\xymatrix{X\ar[r]^{\alpha}&Y}$,
where $X,Y\in \mathbf{M}(\mathtt{i})$ and $\alpha\in\mathrm{Hom}_{\mathbf{M}(\mathtt{i})}(X,Y)$. The homomorphism
set 
\begin{displaymath}
\mathrm{Hom}_{\overline{\mathbf{M}}(\mathtt{i})}(\xymatrix{X\ar[r]^{\alpha}&Y},\xymatrix{X'\ar[r]^{\alpha'}&Y'})
\end{displaymath}
is defined as the quotient of the linear space generated by all commutative diagrams as shown in the solid part of the
following picture:
\begin{displaymath}
\xymatrix{
X\ar[r]^{\alpha}\ar[d]_{\beta}&Y\ar[d]^{\gamma}\ar@{.>}[dl]_{\xi}\\
X'\ar[r]^{\alpha'}&Y' 
} 
\end{displaymath}
modulo the subspace spanned by all diagrams for which there exists $\xi$ as shown on the picture such that 
$\gamma=\alpha'\xi$. The $2$-action of $\cC$ is defined on such diagrams in the natural way, that is component-wise,
which makes $\overline{\mathbf{M}}$ into an abelian $2$-representation of $\cC$. Extending $2$-natural transformations
and modifications in $\cC\text{-}\mathrm{afmod}$ to diagrams component-wise defines a $2$-functor from
$\cC\text{-}\mathrm{afmod}$ to $\cC\text{-}\mathrm{mod}$ which we will call the {\em abelianization} functor
and denote by $\overline{\,\,\cdot\,\,}:\cC\text{-}\mathrm{afmod}\to\cC\text{-}\mathrm{mod}$.
Applying this $2$-functor to $\mathbb{P}_{\mathtt{i}}$ yields the {\em principal abelian} $2$-representation
$\overline{\mathbb{P}}_{\mathtt{i}}$ considered in \cite{MM}.

\begin{remark}\label{rm73}
{\rm
Let $\mathfrak{T}_{\Bbbk}$ denote the locally small $2$-subcategory of $\mathbf{Cat}$ with objects 
small triangulated $\Bbbk$-linear categories, $1$-morphisms triangle functors and $2$-morphisms
natural transformations of functors. Denote by $\cC\text{-}\mathrm{tmod}$ the category of 
$2$-representations of $\cC$ in $\mathfrak{T}_{\Bbbk}$. Similarly to the above, using the usual 
construction of the homotopy category of an additive category, one defines a $2$-functor 
from $\cC\text{-}\mathrm{afmod}$ to $\cC\text{-}\mathrm{tmod}$.
}
\end{remark}

\subsection{Additive subrepresentations of abelian representations}\label{s2.3}

Assume that we are given $\mathbf{M}\in\cC\text{-}\mathrm{mod}$, $\mathtt{i}\in\cC$ and $X\in\mathbf{M}(\mathtt{i})$.
For $\mathtt{j}\in\cC$ define $\mathbf{M}_{X}(\mathtt{j})$ as $\mathrm{add}(\mathrm{F}\,X)$, where $\mathrm{F}$ runs
through the set of all $1$-morphisms in $\cC(\mathtt{i},\mathtt{j})$. Since the number of $1$-morphisms in 
$\cC(\mathtt{i},\mathtt{j})$ is finite up to isomorphism, we have $\mathbf{M}_{X}(\mathtt{j})\in\mathfrak{A}_{\Bbbk}^f$.
Via restriction from $\mathbf{M}$, $\mathbf{M}_{X}$ becomes a $2$-representation of $\cC$ and from the previous
observation we obtain that $\mathbf{M}_{X}\in \cC\text{-}\mathrm{afmod}$. This is a very general and powerful tool for 
constructing new $2$-representations.

\subsection{Cell $2$-representation}\label{s2.4}

In this subsection we recall the construction of a special class of $2$-representations defined and studied in \cite{MM}.
Let $\mathcal{L}$ be a (nonzero) left cell of $\mathcal{S}(\cC)$. To simplify notation, we will from now on identify 
indecomposable $1$-morphisms in $\cC$ with the corresponding classes in $\mathcal{S}(\cC)$, in particular, we will
write $\mathrm{F}\in \mathcal{S}(\cC)$ instead of $[\mathrm{F}]\in \mathcal{S}(\cC)$. 

There is $\mathtt{i}=\mathtt{i}_{\mathcal{L}}\in\cC$ such that for any $1$-morphism $\mathrm{F}\in \mathcal{L}$ 
we have $\mathrm{F}\in \cC(\mathtt{i},\mathtt{j})$ for some $\mathtt{j}\in\cC$. Consider 
$\overline{\mathbb{P}}_{\mathtt{i}}$. For an indecomposable $1$-morphism $\mathrm{F}$ in some 
$\cC(\mathtt{i},\mathtt{j})$ denote by $P_{\mathrm{F}}$ the indecomposable projective module $0\to \mathrm{F}$
in $\overline{\mathbb{P}}_{\mathtt{i}}(\mathtt{j})$ and by $L_{\mathrm{F}}$ the unique simple top of $P_{\mathrm{F}}$. 
By \cite[Proposition~17]{MM}, there exists the unique $\mathrm{G}_{\mathcal{L}}\in \mathcal{L}$ (called the
{\em Duflo involution} in $\mathcal{L}$) such that the indecomposable projective module $P_{\mathbbm{1}_{\mathtt{i}}}$
has a unique quotient $N$ such that the simple socle of $N$ is isomorphic to $L_{\mathrm{G}_{\mathcal{L}}}$ and
$\mathrm{F}\, N/L_{\mathrm{G}_{\mathcal{L}}}=0$ for any $\mathrm{F}\in \mathcal{L}$. Set
$Q:=\mathrm{G}_{\mathcal{L}} \, L_{\mathrm{G}_{\mathcal{L}}}$. Then the additive $2$-representation
$\mathbf{C}_{\mathcal{L}}:=\left(\overline{\mathbb{P}}_{\mathtt{i}}\right)_{Q}$ is called the {\em additive cell} 
$2$-representation of $\cC$ associated to $\mathcal{L}$. The abelianization $\overline{\mathbf{C}}_{\mathcal{L}}$
of $\mathbf{C}_{\mathcal{L}}$ is called the {\em abelian cell} $2$-rep\-re\-sen\-ta\-ti\-on of $\cC$ associated 
to $\mathcal{L}$.

\section{Equivalent $2$-representations}\label{s4}

\subsection{Abelian representations are defined by the action on projectives}\label{s4.1}

Let $\mathbf{M}\in\cC\text{-}\mathrm{mod}$. As $\cC$ is fiat, the action of any $1$-morphism $\mathrm{F}$ in $\cC$
on $\mathbf{M}$ is given by a functor biadjoint to an exact functor (representing the action of $\mathrm{F}^*$).
In particular, it follows that any such $\mathrm{F}$ sends projective objects to projective objects. For every
$\mathtt{i}$ denote by $\mathbf{M}_{pr}(\mathtt{i})$ the additive subcategory of $\mathbf{M}(\mathtt{i})$ consisting
of all projective objects. Then the restriction of the action of $\cC$ defines the additive $2$-representation
$\mathbf{M}_{pr}\in\cC\text{-}\mathrm{afmod}$. The main result of this subsection is the following:

\begin{theorem}\label{thm9}
The $2$-representations $\mathbf{M}$ and $\overline{\mathbf{M}_{pr}}$ are equivalent. 
\end{theorem}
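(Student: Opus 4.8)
The plan is to construct a $2$-natural transformation $\Phi\colon\overline{\mathbf{M}_{pr}}\to\mathbf{M}$ and show that each $\Phi_{\mathtt i}$ is an equivalence of categories; this suffices since it exhibits $\mathbf{M}$ and $\overline{\mathbf{M}_{pr}}$ as elementary equivalent, hence equivalent. The component $\Phi_{\mathtt i}$ should be the functor which sends a diagram $\xymatrix{P\ar[r]^{\alpha}&Q}$ (with $P,Q$ projective in $\mathbf{M}(\mathtt i)$) to the cokernel $\mathrm{coker}(\alpha)\in\mathbf{M}(\mathtt i)$, with the obvious action on morphisms. First I would check this is well defined: a commutative square $(\beta,\gamma)$ for which some $\xi$ exists with $\gamma=\alpha'\xi$ induces the zero map on cokernels, so the quotient defining $\mathrm{Hom}_{\overline{\mathbf{M}_{pr}}(\mathtt i)}$ is respected. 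Next I would verify $2$-naturality, i.e. that for each $1$-morphism $\mathrm F\in\cC(\mathtt i,\mathtt j)$ the square relating $\mathrm F\circ\Phi_{\mathtt i}$ and $\Phi_{\mathtt j}\circ\mathrm F$ commutes up to a coherent natural isomorphism; this is where fiatness enters, since $\mathrm F$ acts as an exact functor on $\mathbf M$ and therefore commutes with taking cokernels, and it preserves projectives so that $\mathrm F$ applied to a diagram in $\overline{\mathbf{M}_{pr}}(\mathtt i)$ lands in $\overline{\mathbf{M}_{pr}}(\mathtt j)$. Compatibility with $2$-morphisms and with horizontal composition is then a routine diagram chase.

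The core of the argument is showing each $\Phi_{\mathtt i}$ is an equivalence, and the natural way to do this is to observe that $\overline{\mathbf{M}_{pr}}(\mathtt i)$ is, by construction, precisely the abelian category whose objects are presentations $P^{1}\to P^{0}$ by projectives of $\mathbf M(\mathtt i)$ modulo the subspace of maps factoring through the kernel of such a presentation; this is the standard construction recovering an abelian category with enough projectives from its subcategory of projectives. Concretely, $\mathbf M(\mathtt i)\simeq A\text{-}\mathrm{mod}$ for a finite dimensional $A$, the projectives form $\mathrm{add}(A)$, and the functor $\mathrm{coker}\colon \overline{\mathrm{add}(A)}\to A\text{-}\mathrm{mod}$ is an equivalence because every module has a presentation and two presentations of the same module differ by the relations built into the Hom-spaces. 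I would spell out an explicit quasi-inverse: send $X\in\mathbf M(\mathtt i)$ to a chosen projective presentation $P_{X}^{1}\to P_{X}^{0}$ (e.g. the minimal one), and check that the ambiguity in this choice is exactly killed in the Hom-spaces of $\overline{\mathbf{M}_{pr}}(\mathtt i)$, so that the assignment is functorial and inverse to $\Phi_{\mathtt i}$ up to natural isomorphism. Fullness and faithfulness of $\Phi_{\mathtt i}$ can alternatively be verified directly: a morphism $X\to X'$ lifts (non-uniquely) to a morphism of chosen presentations, and two lifts differ by a homotopy, which is precisely the equivalence relation imposed on commutative squares.

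I expect the main obstacle to be bookkeeping rather than conceptual: one must be careful that the equivalences $\Phi_{\mathtt i}$ can be assembled into a genuine $2$-natural transformation, i.e. that the isomorphisms $\mathrm F\circ\Phi_{\mathtt i}\cong\Phi_{\mathtt j}\circ\mathrm F$ satisfy the coherence (hexagon/pentagon) conditions making $\Phi$ a morphism in $\cC\text{-}\mathrm{mod}$, and that the chosen-presentation quasi-inverse is compatible with the $2$-action so that no further elementary-equivalence step is needed. The exactness of the $\mathrm F$ on $\mathbf M$ — guaranteed by the adjunction data in the fiat axioms — is exactly what makes these isomorphisms canonical, so the coherence should follow formally from uniqueness of cokernels. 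A secondary subtlety is to confirm that $\mathbf M_{pr}$ indeed lies in $\cC\text{-}\mathrm{afmod}$ (finitely many indecomposable projectives up to isomorphism, because $\mathbf M(\mathtt i)\in\mathfrak R_{\Bbbk}$), which is needed merely for $\overline{\mathbf M_{pr}}$ to be defined; this is immediate. Once these are in place, the equivalence $\overline{\mathbf{M}_{pr}}\simeq\mathbf M$ and hence the theorem follow.
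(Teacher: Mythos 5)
Your overall strategy --- relating $\mathbf{M}$ and $\overline{\mathbf{M}_{pr}}$ via projective presentations, and checking that each component is an equivalence by the standard recovery of an abelian category with enough projectives from its subcategory of projectives --- matches the paper's, and your verification that the cokernel functor $\Phi_{\mathtt{i}}$ is well defined, full, faithful and dense is correct. The gap is exactly at the point you flag and then dismiss: the cokernel assignment is \emph{not} a $2$-natural transformation in the sense this paper uses. With respect to any fixed choice of cokernels, $\mathrm{F}$ being right exact gives $\mathrm{F}(\mathrm{coker}(\alpha))\cong\mathrm{coker}(\mathrm{F}\alpha)$ but not equality, so the square relating $\mathbf{M}(\mathrm{F})\circ\Phi_{\mathtt{i}}$ and $\Phi_{\mathtt{j}}\circ\overline{\mathbf{M}_{pr}}(\mathrm{F})$ commutes only up to isomorphism. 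The morphisms in $\cC\text{-}\mathrm{mod}$ here are strict $2$-natural transformations --- this is why ``equivalent'' in Subsection~\ref{s1.5} is defined as the zigzag closure of ``elementary equivalent''; if pseudo-natural equivalences were admitted, that closure would be pointless --- and no choice of cokernels can be made strictly compatible with all the functors $\mathbf{M}(\mathrm{F})$ simultaneously. So ``the coherence should follow formally from uniqueness of cokernels'' only buys you a pseudo-natural transformation, and your claim that ``no further elementary-equivalence step is needed'' is precisely what fails.

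The paper's fix is the missing idea: interpose an auxiliary $2$-representation $\mathbf{N}$ whose objects are formal symbols $(\mathrm{F}\,X)_{(\mathrm{F})}$ with Hom-spaces borrowed from $\mathbf{M}$ and with the action defined freely on the label, $\mathrm{G}\,(\mathrm{F}\,X)_{(\mathrm{F})}:=(\mathrm{G}\circ\mathrm{F}\,X)_{(\mathrm{G}\circ\mathrm{F})}$. The forgetful map $\mathbf{N}\to\mathbf{M}$ is strictly $2$-natural and an equivalence on each object; and because a projective presentation $P_X\to Q_X$ is chosen once for each base object $X$ and then transported by applying $\mathrm{F}$ to it, the assignment $(\mathrm{F}\,X)_{(\mathrm{F})}\mapsto(\mathrm{F}\,P_X\to\mathrm{F}\,Q_X)$ is strictly compatible with the action and gives a second elementary equivalence $\mathbf{N}\to\overline{\mathbf{M}_{pr}}$. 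The zigzag $\mathbf{M}\leftarrow\mathbf{N}\to\overline{\mathbf{M}_{pr}}$ is exactly what the definition of equivalence is designed to accommodate; your argument becomes correct once you insert this (or an equivalent strictification) in place of the single pseudo-natural $\Phi$.
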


\begin{proof}
We prove the claim by constructing a sequence of elementary equivalent $2$-representations of $\cC$ connecting 
$\mathbf{M}$ with $\overline{\mathbf{M}_{pr}}$.

Define the $2$-representation $\mathbf{N}$ of $\cC$ as follows: for $\mathtt{i}\in\cC$ the category
$\mathbf{N}(\mathtt{i})$ has objects $(\mathrm{F}\, X)_{(\mathrm{F})}$, where $X\in \mathbf{M}(\mathtt{j})$ and
$\mathrm{F}$ is a $1$-morphism in $\cC(\mathtt{j},\mathtt{i})$ for some $\mathtt{j}$. 
For $(\mathrm{F}\, X)_{(\mathrm{F})},(\mathrm{F}'\, X')_{(\mathrm{F}')}\in
\mathbf{N}(\mathtt{i})$ set 
\begin{equation}\label{eq1}
\mathrm{Hom}_{\mathbf{N}(\mathtt{i})}((\mathrm{F}\, X)_{(\mathrm{F})},(\mathrm{F}'\, X')_{(\mathrm{F}')}):=
\mathrm{Hom}_{\mathbf{M}(\mathtt{i})}(\mathrm{F}\, X,\mathrm{F}'\, X').
\end{equation}
Define the action of $1$-morphisms of $\cC$ on objects by 
$\mathrm{G}\, (\mathrm{F}\, X)_{(\mathrm{F})}:=(\mathrm{G}\circ \mathrm{F}\, X)_{(\mathrm{G}\circ \mathrm{F})}$. 
Define the action of $1$-morphisms of $\cC$ on morphisms by inducing the action from the one on 
$\mathbf{M}(\mathtt{i})$ in the natural way. Define the action of $2$-morphisms similarly by inducing it from 
$\mathbf{M}(\mathtt{i})$ in the  natural way. This gives an abelian $2$-representation of $\cC$. Sending 
$(\mathrm{F}\, X)_{(\mathrm{F})}\in \mathbf{N}(\mathtt{i})$ to  $\mathrm{F}\, X\in \mathbf{M}(\mathtt{i})$ and using 
the identity map on morphisms we get the forgetful $2$-natural transformation  from $\mathbf{N}$ to $\mathbf{M}$ which 
obviously restricts to an equivalence for every $\mathtt{i}$.  Hence $\mathbf{M}$ and $\mathbf{N}$ are elementary  
equivalent.

For any $\mathtt{i}\in\cC$ and any object $X\in \mathbf{M}(\mathtt{i})$ fix some projective presentation
$P_X\overset{\alpha}{\to} Q_X\tto X$ of $X$ in $\mathbf{M}(\mathtt{i})$. The assignment 
\begin{displaymath}
(\mathrm{F}\, X)_{(\mathrm{F})}\mapsto
(\mathrm{F}\,P_X\overset{\mathrm{F}\, \alpha}{\longrightarrow} \mathrm{F}\,Q_X)
\end{displaymath}
extends to a $2$-natural transformation $\mathbf{N}\to \overline{\mathbf{M}_{pr}}$ in the obvious way. 
Clearly, the restriction of this $2$-natural transformation to any $\mathtt{i}$ is an equivalence. Hence 
$\mathbf{N}$ and  $\overline{\mathbf{M}_{pr}}$ are elementary equivalent. The claim of the theorem follows. 
\end{proof}

\subsection{Comparison of cell $2$-representations}\label{s4.2}

In this subsection we present the additive adaptation of \cite[Theorem~1]{MM}. First we note that
our definition of an abelian cell $2$-representation is slightly different from the one in \cite{MM} (here we define
the abelian cell $2$-representation as abelianization of a certain additive $2$-subrepresentation 
$\mathbf{N}$ of $\overline{\mathbb{P}}_{\mathtt{i}}$, while in \cite{MM} the abelian cell $2$-representation was 
defined as an abelian $2$-subrepresentation $\mathbf{M}$ of $\overline{\mathbb{P}}_{\mathtt{i}}$ such that 
$\mathbf{N}=\mathbf{M}_{pr}$). By Theorem~\ref{thm9}, these two different definitions 
produce equivalent  $2$-representations. We also note that we slightly change the notation from \cite{MM}
because of the introduction of the abelianization functor (and thus $\mathbf{C}_{\mathcal{L}}$ is an additive
$2$-representation in this paper while it was abelian in \cite{MM}). We think our present notation is more natural.

Let $\cC$ be a fiat category, $\mathcal{J}$ a two-sided cell of $\cC$ and $\mathcal{L}$ a left cell in  $\mathcal{J}$. 
The cell $\mathcal{J}$ is called {\em strongly regular} (see \cite[4.8]{MM})
provided that any two different left cells in $\mathcal{J}$ are not comparable with respect to $\leq_L$, and
the intersection of each left and each right cell in $\mathcal{J}$ consists of exactly one element. For example,
all two-sided cells for the fiat category $\cC_A$ in Example~\ref{ex1} are strongly regular. Similarly, all
two-sided cells for the fiat category $\cS_{\mathfrak{sl}_n}$ in Example~\ref{ex2} are strongly regular
(however, this is not the case if $\mathfrak{g}$ is not of type $A$).
Assume that $\mathcal{J}$ is strongly regular. Then for any $\mathrm{F}\in \mathcal{J}$ the intersection of the
left cell of $\mathrm{F}$ with the right cell of $\mathrm{F}^*$ consists of a unique element, say $\mathrm{H}$.
Let $m_{\mathrm{F}}$ denote the multiplicity of $\mathrm{H}$ in $\mathrm{F}^*\circ \mathrm{F}$.

\begin{proposition}\label{prop10}
Assume that $\mathcal{J}$ is strongly regular and that 
\begin{equation}\label{eq2}
\text{the function }\, \mathrm{F}\mapsto m_{\mathrm{F}}\, \text{ is constant
on right cells of }\, \mathcal{J}. 
\end{equation}
Then for any two left cells $\mathcal{L}$ and $\mathcal{L}'$ in $\mathcal{J}$
the $2$-representations $\mathbf{C}_{\mathcal{L}}$ and $\mathbf{C}_{\mathcal{L}'}$ are equivalent
and, similarly, their abelianizations are equivalent as well.
\end{proposition}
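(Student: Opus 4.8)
The plan is to follow closely the strategy of \cite[Theorem~1]{MM}, reducing the statement about the additive $2$-representations $\mathbf{C}_{\mathcal{L}}$ and $\mathbf{C}_{\mathcal{L}'}$ to a statement about their abelianizations via Theorem~\ref{thm9}, and then establishing the equivalence of the abelian cell $2$-representations by an explicit comparison. First I would fix a left cell $\mathcal{L}$ with $\mathtt{i}=\mathtt{i}_{\mathcal{L}}$ and a second left cell $\mathcal{L}'$ with $\mathtt{i}'=\mathtt{i}_{\mathcal{L}'}$; since $\mathcal{L}$ and $\mathcal{L}'$ lie in the same two-sided cell $\mathcal{J}$, there is an indecomposable $1$-morphism $\mathrm{G}$ with $\mathrm{G}\in\cC(\mathtt{i},\mathtt{i}')$ (up to the usual identifications) lying in the intersection of the right cell of $\mathcal{L}'$-objects and a suitable left cell, so that horizontal composition with $\mathrm{G}$ (and with $\mathrm{G}^*$) moves objects of the one principal representation towards the other. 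The strong regularity assumption guarantees that these ``connecting'' $1$-morphisms behave rigidly: each left-cell/right-cell intersection is a singleton, so the combinatorics of which summands appear is forced.

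The key computational input is the interaction of the action functors with the distinguished objects defining the cell $2$-representations. Recall $\mathbf{C}_{\mathcal{L}}=(\overline{\mathbb{P}}_{\mathtt{i}})_{Q}$ with $Q=\mathrm{G}_{\mathcal{L}}\,L_{\mathrm{G}_{\mathcal{L}}}$, where $\mathrm{G}_{\mathcal{L}}$ is the Duflo involution; and similarly for $\mathcal{L}'$. I would show that applying the functor given by horizontal composition with an appropriate indecomposable $1$-morphism in the intersection of the relevant cells sends the generating object of $\mathbf{C}_{\mathcal{L}}$ to (a multiple of, or an object with the same additive closure as) the generating object of $\mathbf{C}_{\mathcal{L}'}$, and conversely; here condition \eqref{eq2}, that $\mathrm{F}\mapsto m_{\mathrm{F}}$ is constant on right cells, is exactly what is needed to ensure these multiplicities match up on both sides so that no extra summands or deficiencies appear. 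Concretely one builds a $2$-natural transformation (in one direction or the other) between $\mathbf{C}_{\mathcal{L}}$ and a suitable transport of $\mathbf{C}_{\mathcal{L}'}$, and checks that it restricts to an equivalence at every object $\mathtt{j}\in\cC$ by tracking indecomposables: strong regularity identifies the indecomposables in each $\mathbf{C}_{\mathcal{L}}(\mathtt{j})$ with the elements of $\mathcal{L}$ lying in $\cC(\mathtt{i},\mathtt{j})$, and the connecting $1$-morphism induces a bijection of cells. This gives that $\mathbf{C}_{\mathcal{L}}$ and $\mathbf{C}_{\mathcal{L}'}$ are elementary equivalent, hence equivalent; applying $\overline{\,\,\cdot\,\,}$ and using that abelianization takes equivalent additive $2$-representations to equivalent abelian ones yields the statement about the abelianizations.

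The main obstacle, as in \cite{MM}, is the bookkeeping of multiplicities: one must show that horizontal composition with the connecting $1$-morphism $\mathrm{G}$, followed by passage to the appropriate indecomposable summand, really does implement a bijection on the level of the cell $2$-representations rather than merely on the level of the multisemigroup $\mathcal{S}(\cC)$. This is where the hypothesis \eqref{eq2} is indispensable and where the argument is most delicate: without constancy of $m_{\mathrm{F}}$ on right cells, composing ``there and back'' (with $\mathrm{G}$ and then $\mathrm{G}^*$, say) could multiply the generating object by differing scalars on the two sides, obstructing the equivalence. I expect the cleanest route is to invoke the biadjunction coming from the fiat structure to set up unit/counit $2$-morphisms between the two composites and then argue, via the explicit description of $\mathrm{Hom}$-spaces in $\overline{\mathbb{P}}_{\mathtt{i}}$ from Subsection~\ref{s2.2} and the characterization of the Duflo involution, that these become isomorphisms after restriction to the cell $2$-subrepresentation. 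Everything else — naturality, $2$-functoriality, compatibility with the $\cC$-action — is routine once the object-level bijection and the multiplicity matching are in place.
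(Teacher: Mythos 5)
Your high-level strategy---connect the two left cells through an element in an intersection of a left and a right cell, use strong regularity for rigidity of summands and condition \eqref{eq2} for multiplicity matching, and pass from the additive statement to the abelian one via the abelianization $2$-functor and Theorem~\ref{thm9}---matches the paper's. But the central device is set up incorrectly, and this is a genuine gap. You propose to carry the generating object of $\mathbf{C}_{\mathcal{L}}$ to that of $\mathbf{C}_{\mathcal{L}'}$ by ``horizontal composition with a connecting $1$-morphism $\mathrm{G}$.'' The action of a $1$-morphism on a $2$-representation never leaves that $2$-representation: if $Q\in\overline{\mathbb{P}}_{\mathtt{i}}(\mathtt{i})$ generates $\mathbf{C}_{\mathcal{L}}$ and $\mathrm{G}\in\cC(\mathtt{i},\mathtt{k})$, then $\mathrm{G}\,Q$ lies in $\overline{\mathbb{P}}_{\mathtt{i}}(\mathtt{k})$, whereas $\mathbf{C}_{\mathcal{L}'}$ lives inside $\overline{\mathbb{P}}_{\mathtt{j}}$ for $\mathtt{j}=\mathtt{i}_{\mathcal{L}'}$. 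So no amount of acting by $1$-morphisms produces a map between the two cell $2$-representations; what is needed is a homomorphism of $2$-representations, i.e.\ (by Lemma~\ref{lem4}) a choice of object in the target. The paper takes $\mathrm{H}$ to be the unique element of $\mathcal{L}'$ intersected with the right cell of the Duflo involution $\mathrm{G}_{\mathcal{L}}$, and uses the Yoneda homomorphism $\Phi:\mathbb{P}_{\mathtt{i}}\to\overline{\mathbb{P}}_{\mathtt{j}}$ sending $\mathbbm{1}_{\mathtt{i}}$ to the \emph{simple} object $L_{\mathrm{H}}$ --- not anything obtained from the generator $Q$ of $\mathbf{C}_{\mathcal{L}}$. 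Restricting the source to $1$-morphisms $\mathrm{F}\geq_L\mathrm{G}_{\mathcal{L}}$ and identifying the image with the additive closure of $\mathrm{F}\,L_{\mathrm{H}}$, $\mathrm{F}\in\mathcal{L}'$ (hence with $\mathbf{C}_{\mathcal{L}'}$), yields an equivalence between $\mathbf{C}_{\mathcal{L}}$ and $(\overline{\mathbf{C}}_{\mathcal{L}'})_{pr}$, which is then compared with $\mathbf{C}_{\mathcal{L}'}$ via $P\mapsto (0\to P)$. In particular the equivalence is a chain through $(\overline{\mathbf{C}}_{\mathcal{L}'})_{pr}$, not a single elementary equivalence as you assert.

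The second issue is that the step you flag as ``most delicate'' is exactly the one you do not carry out. The identification of the image of $\Phi$ with $\mathbf{C}_{\mathcal{L}'}$, and the fact that the induced functor is an equivalence on each $\mathtt{j}$, rest on \cite[Theorem~43(a)]{MM} together with the $\mathrm{Hom}$-space dimension counts of \cite[Lemmas~19 and 21]{MM}; this is precisely where strong regularity and \eqref{eq2} enter. Your proposed substitute---unit/counit $2$-morphisms from the biadjunction for the composites $\mathrm{G}^*\circ\mathrm{G}$, shown to become isomorphisms after restriction---is only a plan, and making it work would require the same multiplicity bookkeeping you defer. Without naming a concrete input playing the role of \cite[Theorem~43(a)]{MM}, the argument does not close.
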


Note that in \cite{MM} it was shown that the technical condition of Proposition~\ref{prop10} is satisfied for
strongly regular cells in Examples~\ref{ex1} and \ref{ex2}.

\begin{proof}
Let $\mathrm{H}$ be the unique element in the intersection of $\mathcal{L}'$ with the right cell of 
$\mathrm{G}_{\mathcal{L}}$. Assume $\mathrm{G}_{\mathcal{L}}\in\cC(\mathtt{i},\mathtt{i})$ and
$\mathrm{G}_{\mathcal{L}'}\in\cC(\mathtt{j},\mathtt{j})$. By Lemma~\ref{lem4} we have a unique homomorphism 
$\Phi:\mathbb{P}_{\mathtt{i}}\to \overline{\mathbb{P}}_{\mathtt{j}}$ mapping $\mathbbm{1}_{\mathtt{i}}$
to $L_{\mathrm{H}}$. 

Let $\mathbf{N}$ denote the additive $2$-subrepresentation of $\mathbb{P}_{\mathtt{i}}$ obtained by restriction
to the full additive subcategory generated by all $1$-morphisms $\mathrm{F}$ satisfying 
$\mathrm{F}\geq_L\mathrm{G}_{\mathcal{L}}$. Let $\mathbf{M}$ denote the additive $2$-subrepresentation of $\overline{\mathbb{P}}_{\mathtt{j}}$ obtained by restriction to the full additive subcategory generated 
by $\mathrm{F}\, L_{\mathrm{H}}$ for all $1$-morphisms $\mathrm{F}$ satisfying 
$\mathrm{F}\geq_L\mathrm{G}_{\mathcal{L}}$. From \cite[Theorem~43(a)]{MM} it follows that the latter additive 
subcategory coincides with the additive closure of $\mathrm{F}\, L_{\mathrm{H}}$, $\mathrm{F}\in\mathcal{L}'$,
and hence with $\mathbf{C}_{\mathcal{L}'}$. Thus $\Phi$ induces, by restriction, a homomorphism 
$\Phi':\mathbf{N}\to \mathbf{C}_{\mathcal{L}'}$.

Let $\mathrm{F}\overset{\alpha}{\longrightarrow}\mathrm{G}_{\mathcal{L}}$ be the 
object of $\overline{\mathbf{N}}(\mathtt{i})$ obtained from $L_{\mathrm{G}_{\mathcal{L}}}$ by deleting all summands 
outside $\mathbf{N}(\mathtt{i})$. From \cite[Theorem~43(a)]{MM} it follows that the abelianization of $\Phi'$ sends 
$\mathrm{F}\overset{\alpha}{\longrightarrow}\mathrm{G}_{\mathcal{L}}$ to a simple 
object in $\overline{\mathbf{C}}_{\mathcal{L}'}(\mathtt{i})$ isomorphic to the top of the indecomposable 
projective object $0\to \mathrm{G}_{\mathcal{L}}\,L_{\mathrm{H}}$. This 
induces an equivalence between $\mathbf{C}_{\mathcal{L}}$ and $(\overline{\mathbf{C}}_{\mathcal{L}'})_{pr}$. 
Mapping $P$ to $0\to P$ for any object $P$ induces an obvious equivalence from $\mathbf{C}_{\mathcal{L}'}$  
to $(\overline{\mathbf{C}}_{\mathcal{L}'})_{pr}$ and the additive claim follows. The abelian claim 
follows from Theorem~\ref{thm9} applying the abelianization functor.
\end{proof}

\section{Annihilators of cell $2$-representations}\label{s5}

\subsection{$2$-ideals and quotients}\label{s5.1}

A {\em $2$-ideal} $\cI$ of $\cC$ consists of the following data:
\begin{itemize}
\item the same objects as $\cC$;
\item the same $1$-morphisms as $\cC$;
\item for any $\mathtt{i},\mathtt{j}\in\cC$ and any $1$-morphisms 
$\mathrm{F},\mathrm{G}\in\cC(\mathtt{i},\mathtt{j})$ a $\Bbbk$-subspace 
$\mathrm{Hom}_{\cI}(\mathrm{F},\mathrm{G})\subset\mathrm{Hom}_{\cC(\mathtt{i},\mathtt{j})}(\mathrm{F},\mathrm{G})$
of $2$-morphisms;
\end{itemize}
such that for any $2$-morphisms $\xi$ in $\cI$ and $\alpha,\beta$ in $\cC$ the compositions
$\alpha\circ_0\xi\circ_0\beta$ and $\alpha\circ_1\xi\circ_1\beta$ are in $\cI$ whenever the expression makes sense.

If $\cI$ is a $2$-ideal of $\cC$, then we can define the quotient $2$-category $\cC/\cI$ as follows:
$\cC/\cI$ has the same objects and $1$-morphisms as $\cC$, and for for any $\mathtt{i},\mathtt{j}\in\cC$ and any 
$1$-morphisms $\mathrm{F},\mathrm{G}\in\cC(\mathtt{i},\mathtt{j})$ we set
\begin{displaymath}
\mathrm{Hom}_{\cC/\cI}(\mathrm{F},\mathrm{G}):=
\mathrm{Hom}_{\cC(\mathtt{i},\mathtt{j})}(\mathrm{F},\mathrm{G})/\mathrm{Hom}_{\cI}(\mathrm{F},\mathrm{G});
\end{displaymath}
all compositions in $\cC/\cI$ are induced by the corresponding compositions in $\cC$. The fact that this is 
well-defined follows directly from the axioms of a $2$-ideal.

If $\Phi:\cC\to \cA$ is a $2$-functor between two fiat categories, define the {\em kernel} of $\mathrm{Ker}(\Phi)$ 
as the datum consisting of the same objects and $1$-morphisms as $\cC$, and for for any $\mathtt{i},\mathtt{j}\in\cC$ 
and any $1$-morphisms $\mathrm{F},\mathrm{G}\in\cC(\mathtt{i},\mathtt{j})$ the subspace
$\mathrm{Hom}_{\mathrm{Ker}(\Phi)}(\mathrm{F},\mathrm{G})$ which coincides with the kernel of the $\Bbbk$-linear map
\begin{displaymath}
\Phi_{\mathrm{F},\mathrm{G}}:\mathrm{Hom}_{\cC(\mathtt{i},\mathtt{j})}(\mathrm{F},\mathrm{G})\to
\mathrm{Hom}_{\cA(\Phi(\mathtt{i}),\Phi(\mathtt{j}))}(\Phi(\mathrm{F}),\Phi(\mathrm{G}))
\end{displaymath}
given by the application of $\Phi$. We have the following usual property that $2$-ideals are exactly kernels of 
$2$-functors.

\begin{lemma}\label{lem11}
\begin{enumerate}[(a)]
\item\label{lem11.1} $\mathrm{Ker}(\Phi)$ is a $2$-ideal of $\cC$.
\item\label{lem11.2} For any $2$-ideal $\cI$ of $\cC$ there is a $2$-category $\cA$ and a $2$-functor
$\Phi:\cC\to \cA$ such that $\mathrm{Ker}(\Phi)=\cI$.
\end{enumerate}
\end{lemma}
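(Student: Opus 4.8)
\textbf{Proof plan for Lemma~\ref{lem11}.}

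For part~\eqref{lem11.1}, the plan is to verify directly that the collection of subspaces $\mathrm{Hom}_{\mathrm{Ker}(\Phi)}(\mathrm{F},\mathrm{G})$ satisfies the closure conditions in the definition of a $2$-ideal. First I would record that each $\mathrm{Hom}_{\mathrm{Ker}(\Phi)}(\mathrm{F},\mathrm{G})$ is a $\Bbbk$-subspace, being the kernel of a $\Bbbk$-linear map. Then, given $\xi\in\mathrm{Hom}_{\mathrm{Ker}(\Phi)}(\mathrm{F},\mathrm{G})$ and arbitrary $2$-morphisms $\alpha,\beta$ in $\cC$ for which the horizontal composite $\alpha\circ_0\xi\circ_0\beta$ makes sense, I would use that $\Phi$ is a $2$-functor and hence commutes with horizontal composition: $\Phi(\alpha\circ_0\xi\circ_0\beta)=\Phi(\alpha)\circ_0\Phi(\xi)\circ_0\Phi(\beta)=\Phi(\alpha)\circ_0 0\circ_0\Phi(\beta)=0$, so the composite lies in $\mathrm{Ker}(\Phi)$. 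The same argument with $\circ_1$ in place of $\circ_0$ handles vertical composition, using that $\Phi$ respects vertical composition and that $0$ composed vertically with anything is $0$. This is entirely routine once one invokes functoriality of $\Phi$; there is no real obstacle.

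For part~\eqref{lem11.2}, the plan is to take $\cA:=\cC/\cI$ and let $\Phi:\cC\to\cC/\cI$ be the canonical quotient $2$-functor, which is the identity on objects and $1$-morphisms and is the canonical projection $\mathrm{Hom}_{\cC(\mathtt{i},\mathtt{j})}(\mathrm{F},\mathrm{G})\to\mathrm{Hom}_{\cC(\mathtt{i},\mathtt{j})}(\mathrm{F},\mathrm{G})/\mathrm{Hom}_{\cI}(\mathrm{F},\mathrm{G})$ on $2$-morphisms. I would first check that $\Phi$ is indeed a $2$-functor: this is precisely the statement, already noted in the text after the definition of $\cC/\cI$, that the compositions in $\cC/\cI$ are well-defined and induced from those in $\cC$, which follows from the $2$-ideal axioms; the identity $1$- and $2$-morphisms manifestly go to identities, and the compatibility with both compositions is immediate from the definition of the quotient. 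Then the identification $\mathrm{Ker}(\Phi)=\cI$ is tautological: for each pair $\mathrm{F},\mathrm{G}$, the map $\Phi_{\mathrm{F},\mathrm{G}}$ is the canonical projection with kernel exactly $\mathrm{Hom}_{\cI}(\mathrm{F},\mathrm{G})$, and since a $2$-ideal is determined by its collection of $2$-morphism subspaces (all other data being copied from $\cC$), this gives $\mathrm{Ker}(\Phi)=\cI$.

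One small point worth addressing, since the definition of $\mathrm{Ker}(\Phi)$ in the excerpt is phrased for a $2$-functor between \emph{fiat} categories, is whether $\cC/\cI$ is again fiat. In general a quotient need not preserve all the fiat axioms (for instance indecomposability of identity $1$-morphisms, or the existence of adjunction morphisms, may fail), so strictly speaking one should either enlarge the ambient class in which $\mathrm{Ker}$ is defined to arbitrary ($\Bbbk$-linear, locally finite) $2$-categories, or observe that the construction and the verification above make no use of fiatness of the target. I would take the latter route and simply remark that the definition of $\mathrm{Ker}(\Phi)$ and the verification in~\eqref{lem11.1} are valid verbatim for any $2$-functor into a $\Bbbk$-linear $2$-category, so that $\cC/\cI$ together with the quotient $2$-functor furnishes the required $\cA$ and $\Phi$. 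This is the only genuinely delicate point; everything else is a direct unwinding of definitions.

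\begin{proof}
\eqref{lem11.1} For any $\mathtt{i},\mathtt{j}$ and $\mathrm{F},\mathrm{G}\in\cC(\mathtt{i},\mathtt{j})$, the space $\mathrm{Hom}_{\mathrm{Ker}(\Phi)}(\mathrm{F},\mathrm{G})$ is the kernel of the $\Bbbk$-linear map $\Phi_{\mathrm{F},\mathrm{G}}$ and is therefore a $\Bbbk$-subspace of $\mathrm{Hom}_{\cC(\mathtt{i},\mathtt{j})}(\mathrm{F},\mathrm{G})$. Let $\xi$ be a $2$-morphism in $\mathrm{Ker}(\Phi)$ and $\alpha,\beta$ be $2$-morphisms in $\cC$. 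Whenever $\alpha\circ_0\xi\circ_0\beta$ is defined, functoriality of $\Phi$ with respect to horizontal composition gives
\begin{displaymath}
\Phi(\alpha\circ_0\xi\circ_0\beta)=\Phi(\alpha)\circ_0\Phi(\xi)\circ_0\Phi(\beta)=0,
\end{displaymath}
since $\Phi(\xi)=0$; hence $\alpha\circ_0\xi\circ_0\beta$ lies in $\mathrm{Ker}(\Phi)$. Similarly, whenever $\alpha\circ_1\xi\circ_1\beta$ is defined, functoriality of $\Phi$ with respect to vertical composition gives $\Phi(\alpha\circ_1\xi\circ_1\beta)=\Phi(\alpha)\circ_1\Phi(\xi)\circ_1\Phi(\beta)=0$, so $\alpha\circ_1\xi\circ_1\beta$ lies in $\mathrm{Ker}(\Phi)$ as well. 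Thus $\mathrm{Ker}(\Phi)$ is a $2$-ideal of $\cC$.

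\eqref{lem11.2} Let $\cI$ be a $2$-ideal of $\cC$, put $\cA:=\cC/\cI$, and let $\Phi:\cC\to\cC/\cI$ be the canonical projection: $\Phi$ acts as the identity on objects and $1$-morphisms, and on $2$-morphisms $\Phi_{\mathrm{F},\mathrm{G}}$ is the canonical quotient map
\begin{displaymath}
\mathrm{Hom}_{\cC(\mathtt{i},\mathtt{j})}(\mathrm{F},\mathrm{G})\to
\mathrm{Hom}_{\cC(\mathtt{i},\mathtt{j})}(\mathrm{F},\mathrm{G})/\mathrm{Hom}_{\cI}(\mathrm{F},\mathrm{G})
=\mathrm{Hom}_{\cC/\cI}(\mathrm{F},\mathrm{G}).
\end{displaymath}
Since all compositions and identities in $\cC/\cI$ are by definition induced from those in $\cC$ (this being well-defined precisely by the $2$-ideal axioms), $\Phi$ preserves identity $1$- and $2$-morphisms and is compatible with both horizontal and vertical composition; hence $\Phi$ is a $2$-functor, and the definition of $\mathrm{Ker}(\Phi)$ applies verbatim (it uses nothing about the target beyond $\Bbbk$-linearity). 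For each pair $\mathrm{F},\mathrm{G}$ the map $\Phi_{\mathrm{F},\mathrm{G}}$ is the canonical projection, whose kernel is exactly $\mathrm{Hom}_{\cI}(\mathrm{F},\mathrm{G})$. Since $\mathrm{Ker}(\Phi)$ and $\cI$ have the same objects and $1$-morphisms as $\cC$ and agree on all $2$-morphism spaces, we conclude $\mathrm{Ker}(\Phi)=\cI$.
\end{proof}
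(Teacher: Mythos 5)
Your proof is correct and follows exactly the route the paper takes: part (a) is the direct computation using that $\Phi$ respects $\circ_0$ and $\circ_1$, and part (b) takes $\cA=\cC/\cI$ with the canonical projection, whose kernel is tautologically $\cI$. Your additional remark that the construction needs no fiatness of the target (the paper's definition of $\mathrm{Ker}$ is phrased for fiat targets, but the verification never uses this) is a sensible clarification of a point the paper glosses over.
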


\begin{proof}
Claim \eqref{lem11.1} is checked by a direct computation. To prove claim \eqref{lem11.2} just take $\cA=\cC/\cI$
and let $\Phi$ be the natural projection.
\end{proof}

\begin{lemma}\label{lem75}
Let $\cC$ be a fiat category and $\cI$ a $2$-ideal. 
\begin{enumerate}[(i)]
\item\label{lem75.1} The image in $\cC/\cI$ of an indecomposable $1$-morphism in $\cC$
is either indecomposable or zero. 
\item\label{lem75.2} Every left, right or $2$-sided cell of $\cC$ 
descends either to a left, right or $2$-sided cell of $\cC/\cI$, respectively, or to zero.
\end{enumerate}
\end{lemma}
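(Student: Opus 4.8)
The plan is to prove both statements by working directly with the local algebra structure of the categories $\cC(\mathtt{i},\mathtt{j})$. For part \eqref{lem75.1}, let $\mathrm{F}\in\cC(\mathtt{i},\mathtt{j})$ be indecomposable. Since $\cC$ is fiat, $\mathrm{Hom}_{\cC(\mathtt{i},\mathtt{j})}(\mathrm{F},\mathrm{F})$ is a finite-dimensional local $\Bbbk$-algebra, with unique maximal ideal $\mathfrak{m}$ consisting of the non-isomorphisms. The image of $\mathrm{F}$ in $\cC/\cI$ has endomorphism algebra $\mathrm{End}_{\cC(\mathtt{i},\mathtt{j})}(\mathrm{F})/\mathrm{Hom}_{\cI}(\mathrm{F},\mathrm{F})$, which is a quotient of a local algebra. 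A quotient of a local finite-dimensional algebra is either local (if the defining ideal is contained in $\mathfrak{m}$) or zero (if the defining ideal contains $\mathrm{id}_{\mathrm{F}}$, equivalently is the whole algebra). In the first case the image of $\mathrm{F}$ has local endomorphism ring and hence is indecomposable in the additive category $\cC/\cI(\mathtt{i},\mathtt{j})$; in the second case $\mathrm{id}_{\mathrm{F}}$ maps to zero, so the image of $\mathrm{F}$ is a zero object. I should first check that $\cC/\cI$ is indeed a fully additive $\Bbbk$-linear $2$-category so that ``indecomposable'' makes sense there --- this is immediate from the construction, as the Hom-spaces are quotients of the original ones and idempotents still split after passing to a quotient (or one takes the additive/idempotent completion, which changes nothing for our purposes).

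For part \eqref{lem75.2}, the key observation is that the multisemigroup $\mathcal{S}(\cC/\cI)$ is a quotient of $\mathcal{S}(\cC)$ in a precise sense: by part \eqref{lem75.1}, the natural projection $\cC\to\cC/\cI$ sends each indecomposable $1$-morphism either to an indecomposable one or to zero, and this is compatible with horizontal composition because the projection is a $2$-functor (so it commutes with $\circ$ and with direct sum decompositions, up to the possible collapse of summands to zero). Concretely, if $\pi:\mathcal{S}(\cC)\to\mathcal{S}(\cC/\cI)$ denotes the induced map (sending the class of an indecomposable $\mathrm{F}$ to the class of its image, and sending to $0$ those $\mathrm{F}$ whose image vanishes, as well as $0\mapsto 0$), then for any indecomposable $\mathrm{F},\mathrm{G}$ one has $\pi([\mathrm{F}]*[\mathrm{G}]) = \pi([\mathrm{F}])*\pi([\mathrm{G}])$ in $\mathcal{B}(\mathcal{S}(\cC/\cI))$: a summand $\mathrm{H}$ of $\mathrm{F}\circ\mathrm{G}$ in $\cC$ either survives as a summand of the image of $\mathrm{F}\circ\mathrm{G}$, which equals (the image of $\mathrm{F})\circ$(the image of $\mathrm{G})$ in $\cC/\cI$, or dies; and conversely every indecomposable summand of the composite in $\cC/\cI$ lifts to one in $\cC$ by splitting idempotents along the surjective algebra map on endomorphism rings. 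Thus $\pi$ is a surjective homomorphism of multisemigroups onto $\mathcal{S}(\cC/\cI)$ (where we must allow $\pi$ to also send nonzero elements to $0$, i.e. it is a homomorphism to $\mathcal{S}(\cC/\cI)$ viewed with its zero element, which is the appropriate notion here since $\cC/\cI$ need not have a reduced multisemigroup).

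It then remains to check the general fact that under such a surjective multisemigroup homomorphism $\pi$, the preimage of a cell is a union of cells, and each nonzero cell of $\mathcal{S}(\cC)$ maps onto a cell of $\mathcal{S}(\cC/\cI)$ of the same type (left/right/two-sided) unless it is sent to $0$. This is a routine verification with the orders $\leq_L,\leq_R,\leq_J$: if $a\leq_L b$ in $\mathcal{S}(\cC)$, i.e. $S^1*b\subseteq S^1*a$, then applying $\pi$ gives $\pi(S^1)*\pi(b)\subseteq\pi(S^1)*\pi(a)$, and since $\pi$ is surjective (and $\pi(1)$ is a unit, using the formal-unit adjunction if necessary) this yields $\pi(a)\leq_L\pi(b)$; hence $\pi$ is order-preserving for all three orders and consequently sends $\sim_L$-, $\sim_R$-, $\sim_J$-classes into classes of the same type. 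Surjectivity of $\pi$ onto the nonzero part, combined with the fact that cells are equivalence classes, forces each cell to map onto a full cell (or to $\{0\}$). The main obstacle I anticipate is being careful about the role of the adjoined unit $1\notin S$ and the zero element $0$: one should verify that $\pi$ can be consistently extended to $S^1$ (sending $1\mapsto 1$) and that the arguments about $S^1*b\subseteq S^1*a$ survive the extension, and one must handle the degenerate case where an entire cell collapses to $0$ --- which is exactly the ``or to zero'' alternative in the statement, and which does occur, for instance when $\cI$ is large enough to kill all $1$-morphisms outside $\mathbbm{1}$. Everything else is bookkeeping with the definitions of Section~\ref{s3}.
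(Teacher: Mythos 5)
Your proof is correct and follows essentially the same route as the paper: part (i) via the fact that a quotient of the local endomorphism algebra of an indecomposable $1$-morphism is local or zero, and part (ii) by observing that the quotient $2$-functor respects horizontal composition and direct summands, hence the orders $\leq_L$, $\leq_R$, $\leq_J$ and the resulting cells. Your multisemigroup-homomorphism phrasing and the explicit remark about lifting idempotents along the surjection of endomorphism rings are just a slightly more detailed rendering of the paper's argument, not a different method.
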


\begin{proof}
Claim \eqref{lem75.1} follows from definitions and the fact that the quotient of a local 
endomorphism algebra of an indecomposable $1$-morphism is either local or zero.

For two indecomposable $1$-morphisms $\mathrm{F}$ and $\mathrm{G}$ we have $\mathrm{F}\leq_L\mathrm{G}$
provided that there exists $\mathrm{H}$ such that $\mathrm{G}$ is isomorphic to a direct summand
of $\mathrm{H}\circ\mathrm{F}$. Let $\Phi:\cC\to\cC/\cI$ be the quotient functor, then
$\Phi(\mathrm{G})$ is either zero or isomorphic to a direct summand of $\Phi(\mathrm{H})\circ\Phi(\mathrm{F})$.
Hence $\Phi(\mathrm{F})\leq_L\Phi(\mathrm{G})$ if $\Phi(\mathrm{G})$ is nonzero and claim \eqref{lem75.2} 
follows in the case of left cells. The other cases are similar.
\end{proof}

\subsection{$\mathcal{J}$-simple fiat categories}\label{s5.2}

Let $\cC$ be a fiat category and $\mathcal{J}$ a non-zero two-sided cell in $\cC$. We will say that $\cC$ is 
{\em $\mathcal{J}$-simple} provided that for every non-trivial $2$-ideal $\cI$ in $\cC$ there exists
$\mathrm{F}\in\mathcal{J}$ such that $\cI$ contains $\mathrm{id}_{\mathrm{F}}$. This is the closest
analogue to the notion of a simple $2$-category which makes sense in our context. Our main result of this subsection 
is the following:

\begin{theorem}\label{thm12}
Let $\cC$ be a fiat category and $\mathcal{J}$ a non-zero two-sided cell in $\cC$. Then there is a unique  
$2$-ideal $\cI$ in $\cC$ such that $\cC/\cI$ is $\mathcal{J}$-simple.
\end{theorem}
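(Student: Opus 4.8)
\textbf{Proof plan for Theorem~\ref{thm12}.}
The plan is to prove existence and uniqueness separately, with uniqueness being the quick part. For uniqueness, suppose $\cI_1$ and $\cI_2$ both give $\mathcal{J}$-simple quotients. Consider the $2$-ideal $\cI_1\cap\cI_2$ (intersection taken componentwise on $2$-morphism spaces, which is visibly again a $2$-ideal). The projection $\cC/(\cI_1\cap\cI_2)\to\cC/\cI_1$ has a $2$-ideal as kernel by Lemma~\ref{lem11}; if this kernel is non-trivial then $\mathcal{J}$-simplicity of $\cC/\cI_1$ forces $\mathrm{id}_{\mathrm{F}}$ into it for some $\mathrm{F}\in\mathcal{J}$, but then $\mathrm{id}_{\mathrm{F}}$ lies in the preimage of $\cI_1$, hence $\mathrm{id}_{\mathrm{F}}\in\cI_1$ already, contradicting (via Lemma~\ref{lem75}, since $\mathrm{id}_{\mathrm{F}}\in\cI_1$ kills the whole cell $\mathcal{J}$, which is incompatible with $\mathcal{J}$ descending to a non-zero two-sided cell of the $\mathcal{J}$-simple category $\cC/\cI_1$) the non-triviality assumptions built into $\mathcal{J}$-simplicity. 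Hence $\cI_1\cap\cI_2=\cI_1$, and symmetrically $\cI_1\cap\cI_2=\cI_2$, so $\cI_1=\cI_2$.

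For existence, the natural candidate is $\cI:=\sum \cI'$, the sum of all $2$-ideals $\cI'$ of $\cC$ that do \emph{not} contain $\mathrm{id}_{\mathrm{F}}$ for any $\mathrm{F}\in\mathcal{J}$ (equivalently, all $2$-ideals $\cI'$ with $\cI'\subseteq\mathrm{Ker}(\mathbf{C}_{\mathcal{L}})$ — one should check these two descriptions agree, which is where Theorem~\ref{thmmain} enters). First I would observe that the set of such ``small'' $2$-ideals is closed under arbitrary sums: a sum of $2$-ideals is a $2$-ideal, and I must show the sum still contains no $\mathrm{id}_{\mathrm{F}}$ with $\mathrm{F}\in\mathcal{J}$. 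This is exactly the point where Theorem~\ref{thmmain} is used: by that theorem $\cC/\mathrm{Ker}(\mathbf{C}_{\mathcal{L}})$ is $\mathcal{J}$-simple, and every $2$-ideal $\cI'$ not containing any $\mathrm{id}_{\mathrm{F}}$, $\mathrm{F}\in\mathcal{J}$, must by $\mathcal{J}$-simplicity of the quotient be contained in $\mathrm{Ker}(\mathbf{C}_{\mathcal{L}})$; hence so is their sum, and $\mathrm{Ker}(\mathbf{C}_{\mathcal{L}})$ itself does not contain any such $\mathrm{id}_{\mathrm{F}}$ (again by $\mathcal{J}$-simplicity and Lemma~\ref{lem75}, since $\mathcal{J}$ survives to a non-zero cell in the cell $2$-representation's image). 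Therefore $\cI=\mathrm{Ker}(\mathbf{C}_{\mathcal{L}})$ is itself one of the ideals in the family, i.e.\ the \emph{largest} such, and $\cC/\cI$ is $\mathcal{J}$-simple directly by Theorem~\ref{thmmain}.

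It remains to check that this $\cI$ genuinely is a $2$-ideal in the precise sense of Subsection~\ref{s5.1} (closure under both horizontal and vertical whiskering by arbitrary $2$-morphisms of $\cC$) — but this is immediate since $\mathrm{Ker}$ of a $2$-functor is a $2$-ideal by Lemma~\ref{lem11}\eqref{lem11.1}, applied to $\Phi=\mathbf{C}_{\mathcal{L}}$ viewed as a $2$-functor into $\mathfrak{R}_{\Bbbk}$ (or, more carefully, into its own image $2$-category). One subtlety to address explicitly: the definition of $\mathcal{J}$-simplicity requires $\mathcal{J}$ to be a non-zero two-sided cell \emph{of} $\cC$, and we are asserting $\mathcal{J}$-simplicity of the quotient $\cC/\cI$; since $\cI$ contains no $\mathrm{id}_{\mathrm{F}}$ with $\mathrm{F}\in\mathcal{J}$, Lemma~\ref{lem75}\eqref{lem75.2} guarantees $\mathcal{J}$ descends to a genuine non-zero two-sided cell of $\cC/\cI$, so the statement is not vacuous.

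\textbf{Main obstacle.} The delicate point is the interplay in the existence half between ``contains no $\mathrm{id}_{\mathrm{F}}$, $\mathrm{F}\in\mathcal{J}$'' and ``is contained in $\mathrm{Ker}(\mathbf{C}_{\mathcal{L}})$'': showing these two conditions pick out the same family of $2$-ideals is precisely the content of Theorem~\ref{thmmain} and must be invoked carefully, together with the fact — also needing a short argument from $\mathcal{J}$-simplicity of the image plus Lemma~\ref{lem75} — that $\mathrm{Ker}(\mathbf{C}_{\mathcal{L}})$ does not itself contain any $\mathrm{id}_{\mathrm{F}}$ with $\mathrm{F}\in\mathcal{J}$. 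Everything else (sums of $2$-ideals are $2$-ideals, kernels are $2$-ideals, the uniqueness diagram chase) is routine.
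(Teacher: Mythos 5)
There is a genuine gap, and it is the central one: your existence argument is circular relative to the paper's logical structure. You invoke Theorem~\ref{thmmain} (that $\cC/\mathrm{Ker}(\mathbf{C}_{\mathcal{L}})$ is $\mathcal{J}$-simple) to identify the maximal ``small'' $2$-ideal, but in the paper Theorem~\ref{thmmain} is Theorem~\ref{thm15}, whose proof \emph{begins} by taking the ideal $\cI$ supplied by Theorem~\ref{thm12}. So the statement you are asked to prove is an input to, not a consequence of, the $\mathcal{J}$-simplicity of the image of the cell $2$-representation. Moreover, even granting Theorem~\ref{thmmain}, your key claim that every $2$-ideal $\cI'$ avoiding all $\mathrm{id}_{\mathrm{F}}$, $\mathrm{F}\in\mathcal{J}$, is contained in $\mathrm{Ker}(\mathbf{C}_{\mathcal{L}})$ does not follow from $\mathcal{J}$-simplicity alone: what $\mathcal{J}$-simplicity gives is that a nontrivial image of $\cI'$ in the quotient contains some $\mathrm{id}_{\mathrm{F}}$, i.e.\ $\mathrm{id}_{\mathrm{F}}\in\cI'+\mathrm{Ker}(\mathbf{C}_{\mathcal{L}})$, and to derive a contradiction you need to know that a \emph{sum} of two $2$-ideals each avoiding $\mathrm{id}_{\mathrm{F}}$ still avoids it. That closure-under-sums statement is exactly the mathematical content your proposal never supplies, and it is also what makes your candidate $\cI=\sum\cI'$ work. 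The paper proves it directly and self-containedly: for indecomposable $\mathrm{F}$ the algebra $\mathrm{End}(\mathrm{F})$ is local and finite dimensional, any $2$-ideal not containing $\mathrm{id}_{\mathrm{F}}$ meets $\mathrm{End}(\mathrm{F})$ in a proper ideal, hence inside the Jacobson radical, and the radical absorbs arbitrary sums of such ideals; therefore the sum of all $2$-ideals avoiding $\{\mathrm{id}_{\mathrm{F}}:\mathrm{F}\in\mathcal{J}\}$ still avoids them. (The paper first reduces, via the thick ideal generated by the identities of all $1$-morphisms not lying $\geq_J$-above $\mathcal{J}$, to the case where $\mathcal{J}$ is the unique maximal two-sided cell, and then runs this radical argument in Lemma~\ref{lem14}.)

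Your uniqueness argument also has a flaw as written: the kernel of the projection $\cC/(\cI_1\cap\cI_2)\to\cC/\cI_1$ is a $2$-ideal of $\cC/(\cI_1\cap\cI_2)$, not of $\cC/\cI_1$, so $\mathcal{J}$-simplicity of $\cC/\cI_1$ says nothing about it. The repair is to compare each $\cI_k$ with the maximal ideal $\cI$ from the existence step: $\cI_k\subseteq\cI$ by maximality, the image of $\cI$ in $\cC/\cI_k$ contains no $\mathrm{id}_{\mathrm{F}}$ (since $\mathrm{id}_{\mathrm{F}}\in\cI+\cI_k=\cI$ would contradict the property of $\cI$), hence is trivial by $\mathcal{J}$-simplicity, giving $\cI\subseteq\cI_k$. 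So uniqueness, like existence, ultimately leans on the radical-based maximality argument you omitted.
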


To prove this theorem we will need the following lemmata.

\begin{lemma}\label{lem13}
\begin{enumerate}[(i)]
\item\label{lem13.1} If $\cI$ is a $2$-ideal of $\cC$ containing  $\mathrm{id}_{\mathrm{F}}$ for some 
$\mathrm{F}\in\mathcal{J}$, then $\cI$ contains $\mathrm{id}_{\mathrm{G}}$ for any $\mathrm{G}\geq_J\mathrm{F}$.
\item\label{lem13.2} Assume that $\cI$ is a $2$-ideal of $\cC$ generated by $\mathrm{id}_{\mathrm{F}}$ for some 
indecomposable $\mathrm{F}$. For any $1$-morphisms $\mathrm{G}$ and $\mathrm{H}$ let
$\mathrm{Hom}_{\cJ}(\mathrm{G},\mathrm{H})$ denote the subspace of $\mathrm{Hom}_{\cC}(\mathrm{G},\mathrm{H})$
generated by all morphisms which factor through some $\mathrm{K}$ such that $\mathrm{K}\geq_J\mathrm{F}$.
Then $\cJ$ is a $2$-ideal of $\cC$ and  $\cI=\cJ$.
\end{enumerate}
\end{lemma}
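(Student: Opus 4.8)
The plan is to prove the two parts in order, since part \eqref{lem13.2} will lean on the kind of reasoning established in part \eqref{lem13.1}. For \eqref{lem13.1}, the idea is to use that $\mathrm{G}\geq_J\mathrm{F}$ means, after passing to indecomposable summands and using Lemma~\ref{lem75}, that there exist $1$-morphisms $\mathrm{A},\mathrm{B}$ such that $\mathrm{G}$ is (isomorphic to) a direct summand of $\mathrm{A}\circ\mathrm{F}\circ\mathrm{B}$. Fixing the corresponding split inclusion $\iota:\mathrm{G}\to\mathrm{A}\circ\mathrm{F}\circ\mathrm{B}$ and split projection $\pi:\mathrm{A}\circ\mathrm{F}\circ\mathrm{B}\to\mathrm{G}$ with $\pi\circ_1\iota=\mathrm{id}_{\mathrm{G}}$, one writes
\begin{displaymath}
\mathrm{id}_{\mathrm{G}}=\pi\circ_1\bigl(\mathrm{id}_{\mathrm{A}}\circ_0\mathrm{id}_{\mathrm{F}}\circ_0\mathrm{id}_{\mathrm{B}}\bigr)\circ_1\iota,
\end{displaymath}
and since $\mathrm{id}_{\mathrm{F}}\in\cI$ and $\cI$ is closed under horizontal composition with $\mathrm{id}_{\mathrm{A}},\mathrm{id}_{\mathrm{B}}$ and vertical composition with $\pi,\iota$, we get $\mathrm{id}_{\mathrm{G}}\in\cI$. (If $\mathrm{G}$ is decomposable the same argument applies to each indecomposable summand, and then $\mathrm{id}_{\mathrm{G}}$ is the sum of the corresponding idempotents, all lying in $\cI$.)

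For part \eqref{lem13.2}, I would first verify that $\cJ$ is a $2$-ideal. Closure under vertical composition with arbitrary $2$-morphisms is immediate from the definition (a composite of a morphism factoring through some $\mathrm{K}\geq_J\mathrm{F}$ still factors through $\mathrm{K}$). For closure under horizontal composition, if $\phi:\mathrm{G}\to\mathrm{H}$ factors as $\mathrm{G}\to\mathrm{K}\to\mathrm{H}$ with $\mathrm{K}\geq_J\mathrm{F}$, then for any $\alpha,\beta$ the $2$-morphism $\alpha\circ_0\phi\circ_0\beta$ factors through $\mathrm{A}\circ\mathrm{K}\circ\mathrm{B}$ for suitable $\mathrm{A},\mathrm{B}$, and every indecomposable summand $\mathrm{K}'$ of $\mathrm{A}\circ\mathrm{K}\circ\mathrm{B}$ satisfies $\mathrm{K}'\geq_J\mathrm{K}\geq_J\mathrm{F}$; composing with the relevant split idempotent exhibits $\alpha\circ_0\phi\circ_0\beta$ as a morphism factoring through such a $\mathrm{K}'$, hence it lies in $\cJ$.

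Next I would show $\cI=\cJ$ by mutual containment. The inclusion $\cJ\subseteq\cI$: by part \eqref{lem13.1}, $\cI$ contains $\mathrm{id}_{\mathrm{K}}$ for every indecomposable $\mathrm{K}\geq_J\mathrm{F}$; hence any morphism factoring as $\mathrm{G}\overset{g}{\to}\mathrm{K}\overset{h}{\to}\mathrm{H}$ equals $h\circ_1\mathrm{id}_{\mathrm{K}}\circ_1 g\in\cI$, so $\cJ\subseteq\cI$. For $\cI\subseteq\cJ$: since $\cI$ is \emph{generated} by $\mathrm{id}_{\mathrm{F}}$, every $2$-morphism in $\cI$ is a sum of composites (horizontal and vertical) of identity $2$-morphisms, arbitrary $2$-morphisms of $\cC$, and $\mathrm{id}_{\mathrm{F}}$; any such composite which is nonzero and involves $\mathrm{id}_{\mathrm{F}}$ factors through a $1$-morphism obtained by horizontally composing $\mathrm{F}$ with other $1$-morphisms, and every indecomposable summand of such a composite is $\geq_J\mathrm{F}$, so the composite lies in $\cJ$ by the $2$-ideal property of $\cJ$ just proved. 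Thus $\cI\subseteq\cJ$, and equality follows.

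The main obstacle I anticipate is bookkeeping in the last step: making precise the claim that an arbitrary element of the $2$-ideal generated by $\mathrm{id}_{\mathrm{F}}$ is a sum of morphisms factoring through $1$-morphisms $\geq_J\mathrm{F}$, i.e.\ that $\cJ$ is large enough to be closed under the generation process. The clean way to handle this is to observe that $\cJ$ is itself a $2$-ideal containing $\mathrm{id}_{\mathrm{F}}$ (take $\mathrm{K}=\mathrm{F}$, $g=h=\mathrm{id}_{\mathrm{F}}$), so by minimality of the generated ideal $\cI\subseteq\cJ$ — which sidesteps any explicit description of generators. The only genuinely delicate point is the use of Krull--Schmidt to pass to indecomposable summands so that $\geq_J$ is literally "is a summand of a horizontal composite with $\mathrm{F}$"; this is available since $\cC$ is fiat and hence each $\cC(\mathtt{i},\mathtt{j})$ is Krull--Schmidt.
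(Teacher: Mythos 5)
Your proposal is correct and follows essentially the same route as the paper: part \eqref{lem13.1} by horizontally composing $\mathrm{id}_{\mathrm{F}}$ with identity $2$-morphisms and then cutting down to direct summands via split idempotents, and part \eqref{lem13.2} by observing that $\cJ$ is a $2$-ideal containing $\mathrm{id}_{\mathrm{F}}$ (giving $\cI\subseteq\cJ$ by minimality of the generated ideal) while part \eqref{lem13.1} gives $\cJ\subseteq\cI$. The ``clean way'' you identify at the end is precisely the paper's argument, so you can drop the preliminary discussion of explicit generators.
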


\begin{proof}
As $\cI$ is a $2$-ideal containing  $\mathrm{id}_{\mathrm{F}}$, it also contains $\mathrm{id}_{\mathrm{G}}$
for any $\mathrm{G}=\mathrm{H}\circ \mathrm{F}\circ\mathrm{H}'$ (using the horizontal composition of 
$\mathrm{id}_{\mathrm{F}}$ with $\mathrm{id}_{\mathrm{H}}$ on the left and $\mathrm{id}_{\mathrm{H}'}$ on the right).
Composing $\mathrm{id}_{\mathrm{G}}$ with projections onto all direct summand of $\mathrm{G}$ we get that
$\cI$ also contains $\mathrm{id}_{\mathrm{G}'}$ for any direct summand $\mathrm{G}'$ of $\mathrm{G}$. 
Claim \eqref{lem13.1} now follows from the definition of $\geq_J$.

That $\cJ$ is a $2$-ideal of $\cC$ follows directly from the fact that the additive closure of all
$\mathrm{K}$ such that $\mathrm{K}\geq_J\mathrm{F}$ is closed with respect to horizontal composition.
As $\mathrm{id}_{\mathrm{F}}$ is contained in$\cJ$, we have $\cI\subset\cJ$. 
On the other hand, from claim \eqref{lem13.1} we know that $\cI$ contains 
$\mathrm{id}_{\mathrm{K}}$ for any $\mathrm{K}\geq_J\mathrm{F}$. Hence $\cJ\subset\cI$ implying claim \eqref{lem13.2}.
\end{proof}

\begin{remark}\label{rem16}
{\rm 
A $2$-ideal $\cI$ of $\cC$ is called {\em thick} if it is generated by $\mathrm{id}_{\mathrm{F}_i}$ for some collection 
$\{\mathrm{F}_i\}$ of $1$-morphisms in $\cC$ (these kinds of ideals, under the name of ``tensor ideals'' were 
considered, for example, in \cite{Os}). From Lemma~\ref{lem13}\eqref{lem13.2} it follows that thick 
$2$-ideals of $\cC$ are in a natural bijection with anti-chains of the partially ordered set of all two-sided cells of 
$\cC$ with respect to the partial order $\leq_J$.
} 
\end{remark}

\begin{lemma}\label{lem14}
The claim of Theorem~\ref{thm12} is true under the assumption that $\mathcal{J}$ is the unique maximal
non-zero two-sided cell of $\cC$ (with respect to $\leq_J$).
\end{lemma}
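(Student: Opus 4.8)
\textbf{Proof plan for Lemma~\ref{lem14}.}

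The plan is to produce an explicit candidate for the $2$-ideal $\cI$ and then verify that the quotient it defines is $\mathcal{J}$-simple and that no other $2$-ideal works. Since $\mathcal{J}$ is the unique maximal non-zero two-sided cell, every indecomposable $1$-morphism which is not $\mathbbm{1}_{\mathtt{i}}$ for some $\mathtt{i}$ lies in some two-sided cell $\leq_J \mathcal{J}$; in particular the only cell strictly above $\mathcal{J}$ is the cell(s) of the identity $1$-morphisms. First I would let $\cI$ be the $2$-ideal of all $2$-morphisms that factor through a $1$-morphism $\mathrm{K}$ with $\mathrm{K}\not\geq_J\mathrm{F}$ for $\mathrm{F}\in\mathcal{J}$ --- equivalently, through an indecomposable $1$-morphism lying in a two-sided cell strictly below $\mathcal{J}$. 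One checks as in Lemma~\ref{lem13}\eqref{lem13.2} that this is indeed a $2$-ideal: the additive closure of the $1$-morphisms in cells strictly below $\mathcal{J}$ is closed under horizontal composition on either side, using that $\mathcal{J}$ is maximal among non-zero cells (so composing something from a lower cell with anything cannot land back in $\mathcal{J}$ as a new summand --- any summand is $\leq_J$ both factors, hence $\leq_J$ the lower one).

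Next I would show $\cC/\cI$ is $\mathcal{J}$-simple. Let $\cI'\supsetneq\cI$ be a $2$-ideal of $\cC$ strictly containing $\cI$ (equivalently, a non-trivial $2$-ideal of $\cC/\cI$ pulled back). Pick a $2$-morphism $\xi\in\cI'\setminus\cI$ between indecomposable $1$-morphisms $\mathrm{G}$ and $\mathrm{H}$. Since $\xi\notin\cI$, by definition $\xi$ does not factor through any lower-cell $1$-morphism, which (by the fiat structure and the biadjunctions) forces $\mathrm{G},\mathrm{H}$ to lie in cells $\geq_J\mathcal{J}$; and using Lemma~\ref{lem75}\eqref{lem75.1} together with the fact that in the quotient by $\cI$ only cells $\geq_J\mathcal{J}$ survive, I may reduce to the case $\mathrm{G},\mathrm{H}\in\mathcal{J}$ (composing $\xi$ with suitable $2$-morphisms to move it into $\mathcal{J}$, using $\mathcal{J}$-simplicity of the surviving combinatorics). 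Then the key point is a standard fiat-category argument: using the adjunction morphisms for $\mathrm{G}$ and $\mathrm{H}$, together with the non-degeneracy of the pairing that the biadjunctions provide, one shows that the $2$-ideal generated by a non-zero $\xi$ between $1$-morphisms in $\mathcal{J}$ must contain $\mathrm{id}_{\mathrm{F}'}$ for some $\mathrm{F}'\in\mathcal{J}$ --- one sandwiches $\xi$ between the co/unit maps to extract a non-zero scalar multiple of an identity $2$-morphism of an indecomposable summand in $\mathcal{J}$. Hence $\cI'$ contains $\mathrm{id}_{\mathrm{F}'}$ for some $\mathrm{F}'\in\mathcal{J}$, which is exactly the defining condition of $\mathcal{J}$-simplicity.

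Finally, uniqueness: if $\cC/\cI_1$ and $\cC/\cI_2$ are both $\mathcal{J}$-simple, I would argue that any $2$-ideal properly between $\cI$ (the one above) and $\cC$ already contains some $\mathrm{id}_{\mathrm{F}'}$, $\mathrm{F}'\in\mathcal{J}$, and then by Lemma~\ref{lem13}\eqref{lem13.1} it contains $\mathrm{id}_{\mathrm{G}}$ for all $\mathrm{G}\geq_J\mathrm{F}'$, i.e. for all of $\mathcal{J}$ and everything above it. An ideal containing all $\mathrm{id}_{\mathrm{F}'}$ for $\mathrm{F}'\in\mathcal{J}$ but contained in a $\mathcal{J}$-simple quotient's kernel would make that quotient's $\mathcal{J}$-image trivial, contradicting that $\mathcal{J}$ is non-zero in the quotient (Lemma~\ref{lem75}\eqref{lem75.2} says $\mathcal{J}$ descends to a genuine cell or to zero, and $\mathcal{J}$-simplicity together with non-triviality pins down which). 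So the only $2$-ideal giving a $\mathcal{J}$-simple quotient is $\cI$ itself, and $\cC/\cI$ is the desired category. The main obstacle I anticipate is the middle step --- extracting an identity $2$-morphism from an arbitrary non-zero $2$-morphism in $\mathcal{J}$ using the adjunctions; this is where the full force of the fiat axioms (biadjointness and indecomposability of identities) must be used carefully, and it is essentially the $2$-categorical incarnation of the classical fact that a matrix algebra has no non-trivial two-sided ideals.
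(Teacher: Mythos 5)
There is a genuine gap, and it sits exactly where you flagged your main worry. The ``standard fiat-category argument'' you invoke --- that the $2$-ideal generated by any non-zero $2$-morphism $\xi$ between $1$-morphisms of $\mathcal{J}$ must contain $\mathrm{id}_{\mathrm{F}'}$ for some $\mathrm{F}'\in\mathcal{J}$ --- is false. The endomorphism algebra $\mathrm{Hom}_{\cC(\mathtt{i},\mathtt{j})}(\mathrm{F},\mathrm{F})$ of an indecomposable $1$-morphism is local but in general has a non-trivial Jacobson radical (it is not a division algebra: think of $\mathrm{End}(B_{w_0})$ for Soergel bimodules, or the endomorphism rings in Example~\ref{ex1}), and a radical endomorphism typically generates a $2$-ideal avoiding all identity $2$-morphisms; the adjunction/trace manipulation only recovers $\mathrm{id}_{\mathrm{F}'}$ up to radical terms. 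Indeed, if your claim were true, every fiat category would be $\mathcal{J}$-simple for its maximal cell, contradicting Theorem~\ref{thm15} together with the non-faithfulness of cell $2$-representations in concrete examples (see Remark~\ref{rem23} and the $B_2$ example: $\mathrm{Ker}(\mathbf{C}_{\mathcal{L}})$ is a non-trivial ideal containing no $\mathrm{id}_{\mathrm{F}}$, $\mathrm{F}\in\mathcal{J}$, and is not even a thick ideal). This is not a repairable detail: the whole content of the lemma is that a non-zero maximal ideal avoiding the identities of $\mathcal{J}$ exists and is unique, so it cannot be argued away.

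Your candidate ideal is also wrong, for a reason worth internalizing: the order goes the other way. By the proof of Lemma~\ref{lem75}, a direct summand of $\mathrm{H}\circ\mathrm{F}$ is $\geq_J$ each factor, so horizontal composition moves $1$-morphisms \emph{up}; the class of $1$-morphisms closed under composition is the upward-closed one $\{\mathrm{K}:\mathrm{K}\geq_J\mathrm{F}\}$ as in Lemma~\ref{lem13}\eqref{lem13.2}, not the set of cells strictly below $\mathcal{J}$. In particular the identities $\mathbbm{1}_{\mathtt{i}}$ are $\leq_J$-minimal, so ``morphisms factoring through something not in $\mathcal{J}$'' would contain $\mathrm{id}_{\mathbbm{1}_{\mathtt{i}}}$ and hence, by Lemma~\ref{lem13}\eqref{lem13.1}, the identities of $\mathcal{J}$ itself --- the quotient would kill $\mathcal{J}$. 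The paper's proof avoids all of this: it takes $\cI$ to be the \emph{sum} of all $2$-ideals not containing any $\mathrm{id}_{\mathrm{F}}$ with $\mathrm{F}\in\mathcal{J}$, observes that each such ideal meets each local algebra $\mathrm{End}(\mathrm{F})$ inside its Jacobson radical and that the radical absorbs sums of ideals, so the sum is again such an ideal (the unique maximal one); $\mathcal{J}$-simplicity of $\cC/\cI$ and uniqueness then follow by pulling back and pushing forward ideals. No adjunction argument and no explicit description of $\cI$ is needed, and in general none of the thick/factorization descriptions you propose can be correct.
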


\begin{proof}
Let $\cJ$ be a $2$-ideal of $\cC$ that does not contain $\mathrm{id}_{\mathrm{F}}$ for any 
$\mathrm{F}\in\mathcal{J}$. Then for any $\mathrm{F}\in\mathcal{J}$ we have the ideal 
$\mathrm{Hom}_{\cJ}(\mathrm{F},\mathrm{F})$ of the finite-dimensional $\Bbbk$-algebra $\mathrm{Hom}_{\cC(\mathtt{i},\mathtt{j})}(\mathrm{F},\mathrm{F})$ which does not coincide with the latter
algebra. Hence $\mathrm{Hom}_{\cJ}(\mathrm{F},\mathrm{F})$ is contained in the Jacobson radical of
$\mathrm{Hom}_{\cC(\mathtt{i},\mathtt{j})}(\mathrm{F},\mathrm{F})$. Note that the Jacobson radical is stable
under taking sums of subideals.

Let $\cI$ denote the sum of all $2$-ideals of $\cC$ that do not contain any $\mathrm{id}_{\mathrm{F}}$ for 
$\mathrm{F}\in\mathcal{J}$. From the previous paragraph it follows that for $\mathrm{F}\in\mathcal{J}$ the ideal 
$\mathrm{Hom}_{\cI}(\mathrm{F},\mathrm{F})$ of $\mathrm{Hom}_{\cC(\mathtt{i},\mathtt{j})}(\mathrm{F},\mathrm{F})$
is contained in the Jacobson radical of $\mathrm{Hom}_{\cC(\mathtt{i},\mathtt{j})}(\mathrm{F},\mathrm{F})$.
In particular, $\mathrm{Hom}_{\cI}(\mathrm{F},\mathrm{F})$ does not contain $\mathrm{id}_{\mathrm{F}}$. 
We will now show that $\cI$ is the unique $2$-ideal such that $\cC/\cI$ is $\mathcal{J}$-simple.

Indeed, consider the quotient $2$-category $\cC/\cI$. Let $\cI'$ be a $2$-ideal of $\cC/\cI$ which does not contain
any $\mathrm{id}_{\mathrm{F}}$ for  $\mathrm{F}\in\mathcal{J}$. Then the full preimage of $\cI'$ in $\cC$
also has this property and hence is contained in $\cI$ by the construction of $\cI$. Hence $\cI'=\cI$ and
$\cC/\cI$ is $\mathcal{J}$-simple. 

For uniqueness,  let $\cI'$ be another $2$-ideal of $\cC$ such that $\cC/\cI'$ is $\mathcal{J}$-simple. Then 
Lemma~\ref{lem13}\eqref{lem13.1} implies that $\cI'$ does not contain any $\mathrm{id}_{\mathrm{F}}$ for  
$\mathrm{F}\in\mathcal{J}$ and hence $\cI'\subset\cI$. The image of $\cI$ in $\cC/\cI'$ is a $2$-ideal which does not
contain any $\mathrm{id}_{\mathrm{F}}$ for  $\mathrm{F}\in\mathcal{J}$. As  $\cC/\cI'$ is $\mathcal{J}$-simple,
we thus get $\cI'=\cI$. The claim follows.
\end{proof}

\begin{proof}[Proof of Theorem~\ref{thm12}.]
Denote by  $\cJ$ the $2$-ideal of $\cC$ generated by  $\mathrm{id}_{\mathrm{G}}$ for all $1$-morphisms
$\mathrm{G}$ in $\cC$ such that $\mathrm{F}\not\geq_J\mathrm{G}$ for any $\mathrm{F}\in \mathcal{J}$.
If $\cI$ is a $2$-ideal of $\cC$ which does not contain any $\mathrm{id}_{\mathrm{F}}$
for $\mathrm{F}\in\mathcal{J}$, then the image of $\cJ$ in $\cC/\cI$ does not contain any $\mathrm{id}_{\mathrm{F}}$
for $\mathrm{F}\in\mathcal{J}$ either. Hence, if $\cC/\cI$ is $\mathcal{J}$-simple, 
then $\cJ\subset\cI$. This means that without loss of generality we may assume (by going to $\cC/\cJ$) that $\cJ$ 
is trivial. However, in the latter case the definition of $\cJ$ implies that $\mathcal{J}$ is the unique maximal 
two-sided cell of $\cC$. Now the claim follows from Lemma~\ref{lem14}.
\end{proof}

\subsection{Annihilators of cell $2$-representations}\label{s5.3}

In this section we describe annihilators of cell $2$-representations.
The following theorem generalizes \cite[Lemma~3.25]{Ag}.

\begin{theorem}\label{thm15}
Let $\mathcal{J}$ be a two-sided cell in $\cC$ and $\mathcal{L}$ be a left cell in $\mathcal{J}$.
Then the ``image'' $2$-category $\cC/\mathrm{Ker}(\mathbf{C}_{\mathcal{L}})$ of the cell 
$2$-representation $\mathbf{C}_{\mathcal{L}}$ is $\mathcal{J}$-simple.
\end{theorem}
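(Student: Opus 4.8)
The plan is to reduce this to the notion of $\mathcal{J}$-simplicity introduced in Subsection~\ref{s5.2} and then exploit the structure of the cell $2$-representation $\mathbf{C}_{\mathcal{L}}$ recalled in Subsection~\ref{s2.4}. Write $\cI := \mathrm{Ker}(\mathbf{C}_{\mathcal{L}})$, which is a $2$-ideal of $\cC$ by Lemma~\ref{lem11}\eqref{lem11.1}, so that $\cC/\cI$ makes sense. I need to show that if $\cI'$ is a $2$-ideal of $\cC$ with $\cI \subsetneq \cI'$, then $\cI'$ contains $\mathrm{id}_{\mathrm{F}}$ for some $\mathrm{F}\in\mathcal{J}$ (equivalently, working in $\cC/\cI$: any non-trivial $2$-ideal of $\cC/\cI$ contains such an $\mathrm{id}_{\mathrm{F}}$). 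So first I would pick a nonzero $2$-morphism $\xi \in \mathrm{Hom}_{\cI'}(\mathrm{G},\mathrm{H}) \setminus \mathrm{Hom}_{\cI}(\mathrm{G},\mathrm{H})$; by definition of the kernel, $\mathbf{C}_{\mathcal{L}}(\xi) \neq 0$ as a natural transformation, i.e.\ there is some object of some $\mathbf{C}_{\mathcal{L}}(\mathtt{j})$ on which $\xi$ acts nontrivially.

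Next I would use the concrete description of $\mathbf{C}_{\mathcal{L}}$: recall $\mathbf{C}_{\mathcal{L}} = (\overline{\mathbb{P}}_{\mathtt{i}})_Q$ with $Q = \mathrm{G}_{\mathcal{L}}\,L_{\mathrm{G}_{\mathcal{L}}}$, so the objects of $\mathbf{C}_{\mathcal{L}}(\mathtt{j})$ are (additive summands of) $\mathrm{F}\,Q$ for $\mathrm{F}\in\cC(\mathtt{i},\mathtt{j})$, and by the cell-module analysis of \cite{MM} (cf.\ Proposition~\ref{prop10} and \cite[Theorem~43]{MM}) the indecomposable objects are indexed by the elements of $\mathcal{L}$, with $\mathrm{F}\,Q$ "supported" on $\mathrm{F}$. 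The key structural input I want is that the action $\mathbf{C}_{\mathcal{L}}(\mathrm{F})$ of a $1$-morphism $\mathrm{F}\in\mathcal{J}$ on $\mathbf{C}_{\mathcal{L}}$ is "as faithful as possible": more precisely, using adjunctions (the fiat hypothesis) and the fact that $\mathrm{id}_Q = \mathrm{id}_{\mathrm{G}_{\mathcal{L}} L_{\mathrm{G}_{\mathcal{L}}}}$ acts nontrivially, one shows that $\mathbf{C}_{\mathcal{L}}(\mathrm{id}_{\mathrm{F}}) \neq 0$ for every $\mathrm{F}\in\mathcal{J}$ — indeed $\mathrm{F}\,Q \neq 0$ since $\mathrm{F} \in \mathcal{J}$ and $\mathrm{G}_{\mathcal{L}}\in\mathcal{J}$ lie in the same two-sided cell, and the endomorphism ring of $\mathrm{F}\,Q$ in $\mathbf{C}_{\mathcal{L}}$ is local with identity acting as $\mathbf{C}_{\mathcal{L}}(\mathrm{id}_{\mathrm{F}})$ restricted appropriately. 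So $\mathcal{J}$ itself descends faithfully to $\cC/\cI$ (it does not go to zero, by Lemma~\ref{lem75}).

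Then, for the chosen $\xi\in\cI'\setminus\cI$, I would argue that, after horizontally composing $\xi$ with suitable identity $2$-morphisms on the left and right (to "land on" an object of $\mathbf{C}_{\mathcal{L}}$ where $\xi$ acts nonzero) and then composing with the adjunction morphisms $\alpha,\beta$ for an appropriate $1$-morphism, one produces inside $\cI'$ a nonzero $2$-endomorphism of some object $\mathrm{F}\,Q$ with $\mathrm{F}\in\mathcal{L}\subset\mathcal{J}$ — and since $\mathrm{End}(\mathrm{F}\,Q)$ in the relevant $\mathrm{Hom}$-space is local (the object is indecomposable in $\mathbf{C}_{\mathcal{L}}$), any nonzero multiple of a generator together with the ideal property forces $\mathrm{id}_{\mathrm{F}\,Q}$, hence $\mathrm{id}_{\mathrm{F}}$ modulo $\cI$, into $\cI'$; lifting back, $\cI'$ contains $\mathrm{id}_{\mathrm{F}}$ modulo $\cI$, and by Lemma~\ref{lem13}\eqref{lem13.1} this spreads to $\mathrm{id}_{\mathrm{H}}$ for all $\mathrm{H}\geq_J\mathrm{F}$. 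One then has to check this is enough — i.e.\ that the elements of $\mathcal{J}$ themselves (not something strictly larger) are reached; this is where I expect the main obstacle: controlling exactly which $1$-morphism the "collapsed" $2$-morphism factors through, so that one genuinely lands on $\mathcal{J}$ and not merely on $\{\mathrm{H} : \mathrm{H}\geq_J\mathrm{F}\}$ for some $\mathrm{F}\notin\mathcal{J}$. I would handle this by first establishing that $\mathrm{Ker}(\mathbf{C}_{\mathcal{L}})$ already contains $\mathrm{id}_{\mathrm{G}}$ for every $\mathrm{G}$ with $\mathrm{G}\not\leq_J\mathcal{J}$ (equivalently, the cell $2$-representation only "sees" $1$-morphisms $\geq_J$-above or inside $\mathcal{J}$ that act on $Q$, using that $\mathrm{F}\,Q=0$ whenever $\mathrm{F}$ is too large), so that in $\cC/\cI$ the cell $\mathcal{J}$ has become the unique maximal two-sided cell; then the argument of Lemma~\ref{lem14} applies verbatim to $\cC/\cI$, and combined with the faithfulness of the $\mathcal{J}$-action just established (which shows $\mathrm{Ker}(\mathbf{C}_{\mathcal{L}})$ contains no $\mathrm{id}_{\mathrm{F}}$, $\mathrm{F}\in\mathcal{J}$, so is itself "$\cI$" in the sense of that lemma) gives $\mathcal{J}$-simplicity of $\cC/\cI$.
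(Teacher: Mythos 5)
There is a genuine gap at the decisive step. Your plan correctly reduces the theorem to showing that every $2$-ideal $\cI'$ strictly containing $\mathrm{Ker}(\mathbf{C}_{\mathcal{L}})$ contains $\mathrm{id}_{\mathrm{F}}$ for some $\mathrm{F}\in\mathcal{J}$, but the mechanism you propose for producing such an identity does not work: after evaluating $\xi\in\cI'\setminus\mathrm{Ker}(\mathbf{C}_{\mathcal{L}})$ on a suitable object and massaging with adjunctions, you obtain at best a \emph{nonzero} endomorphism of an indecomposable object $\mathrm{F}\,Q$ (equivalently, an element of the local algebra $\mathrm{End}_{\cC}(\mathrm{F})$ seen modulo the kernel). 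A nonzero element of a local algebra need not be a unit --- it may lie in the radical --- so it does not force $\mathrm{id}_{\mathrm{F}}$ into $\cI'$; the endomorphism algebras occurring here have dimension equal to a decomposition multiplicity which is typically greater than one, so ``nonzero'' is genuinely weaker than ``invertible''. Your fallback via Lemma~\ref{lem14} suffers from the mirror-image problem: proving that $\mathrm{Ker}(\mathbf{C}_{\mathcal{L}})$ contains no $\mathrm{id}_{\mathrm{F}}$ with $\mathrm{F}\in\mathcal{J}$ and that $\mathcal{J}$ becomes the unique maximal cell only yields $\mathrm{Ker}(\mathbf{C}_{\mathcal{L}})\subseteq\cI$, where $\cI$ is the ideal of Theorem~\ref{thm12}; by the uniqueness statement there, $\mathcal{J}$-simplicity of $\cC/\mathrm{Ker}(\mathbf{C}_{\mathcal{L}})$ is \emph{equivalent} to the equality $\mathrm{Ker}(\mathbf{C}_{\mathcal{L}})=\cI$, so being ``one of the ideals in the sum'' is not enough to conclude that it ``is itself $\cI$''.

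The missing content is precisely the reverse containment $\cI\subseteq\mathrm{Ker}(\mathbf{C}_{\mathcal{L}})$, i.e.\ that the maximal $2$-ideal avoiding $\{\mathrm{id}_{\mathrm{F}}:\mathrm{F}\in\mathcal{J}\}$ really annihilates the cell $2$-representation. The paper obtains this by passing to the $\mathcal{J}$-simple quotient $\cA=\cC/\cI$ of Theorem~\ref{thm12}, noting that $\mathrm{Ker}(\mathbf{C}_{\mathcal{L}_{\cccA}})=\cI$ for the cell $2$-representation of $\cA$ pulled back to $\cC$, and then proving $\mathrm{Ker}(\mathbf{C}_{\mathcal{L}})=\mathrm{Ker}(\mathbf{C}_{\mathcal{L}_{\cccA}})$ by a direct comparison: the natural homomorphism $\mathbf{C}_{\mathcal{L}}\to(\overline{\mathbf{C}}_{\mathcal{L}_{\cccA}})_{pr}$ is full by \cite[Lemma~21]{MM}, and by adjunction together with \cite[Lemma~19]{MM} the relevant $\mathrm{Hom}$-spaces on both sides have dimension equal to the multiplicity of $\mathrm{G}_{\mathcal{L}}$ in $\mathrm{F}^*\circ\mathrm{G}$, which is the same computed in $\cC$ or in $\cA$; hence the homomorphism is an equivalence. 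Some quantitative input of this kind (surjectivity of the action on $\mathrm{Hom}$-spaces plus a dimension count) is unavoidable, and it is absent from your sketch.
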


\begin{proof}
Let $\cI$ be the $2$-ideal of $\cC$ given by Theorem~\ref{thm12}. Consider the quotient category $\cA:=\cC/\cI$.
Then $\mathcal{J}$ descends, by Lemma~\ref{lem75}\eqref{lem75.2}, to a two-sided cell for  $\cA$ which we will denote by 
$\mathcal{J}_{\ccA}$. Similarly, we have the left cell $\mathcal{L}_{\ccA}$ of $\cA$ coming from $\mathcal{L}$.
The cell $2$-representation $\mathbf{C}_{\mathcal{L}_{\ccA}}$ of $\cA$ becomes a  $2$-representation of $\cC$
by composing the $2$-functor $\mathbf{C}_{\mathcal{L}_{\ccA}}$ with the quotient $2$-functor $\cC\to\cA$, so by definition $\cI \subset \mathrm{Ker}(\mathbf{C}_{\mathcal{L}_{\cccA}})$. On the other hand $\cA$ is $\mathcal{J}$-simple, so no non-trivial ideal of $\cA$ can annihilate $\mathbf{C}_{\mathcal{L}_{\cccA}}$ showing $\mathrm{Ker}(\mathbf{C}_{\mathcal{L}_{\cccA}})\subset \cI$. This yields $\mathrm{Ker}(\mathbf{C}_{\mathcal{L}_{\cccA}})= \cI$.

Let $\mathtt{i}\in\cC$ be such that $\mathrm{G}_{\mathcal{L}}\in \cC(\mathtt{i},\mathtt{i})$.
Similarly to the proof of Proposition~\ref{prop10}, the unique homomorphism from $\mathbb{P}_{\mathtt{i}}$
to $\mathbf{C}_{\mathcal{L}_{\ccA}}$ sending $\mathbbm{1}_{\mathtt{i}}$ to $L_{\mathrm{G}_{\mathcal{L}_{\cccA}}}$
gives rise to a homomorphism $\Phi:\mathbf{C}_{\mathcal{L}}\to(\overline{\mathbf{C}}_{\mathcal{L}_{\cccA}})_{pr}$.
The abelianization $\overline{\Phi}$ sends the simple top $L$  of 
$0\to \mathrm{G}_{\mathcal{L}}\,L_{\mathrm{G}_{\mathcal{L}}}$ to the corresponding simple object $L'$ in
$\overline{(\overline{\mathbf{C}}_{\mathcal{L}_{\cccA}})_{pr}}(\mathtt{i})$.

From \cite[Proposition~17(b)]{MM} it follows that indecomposable projectives for 
$\overline{\mathbf{C}}_{\mathcal{L}}$ and $\overline{\mathbf{C}}_{\mathcal{L}_{\cccA}}$
have the form $\mathrm{F}\, L$ and $\mathrm{F}\, L'$
for $\mathrm{F}\in \mathcal{L}$ or $\mathrm{F}\in \mathcal{L}_{\ccA}$, respectively. This means that 
$\Phi$ sends indecomposable objects to indecomposable objects and, moreover, \cite[Lemma~21]{MM} also implies
that $\Phi$ is full. 

Furthermore, using adjunction, for $\mathrm{F},\mathrm{G}\in\mathcal{J}$ we have
\begin{displaymath}
\mathrm{Hom}(\mathrm{F}\,L,\mathrm{G}\,L)\cong \mathrm{Hom}(L,\mathrm{F}^*\circ\mathrm{G}\,L).
\end{displaymath}
By \cite[Lemma~19]{MM}, the dimension of the latter space equals the multiplicity of $\mathrm{G}_{\mathcal{L}}$
in $\mathrm{F}^*\circ\mathrm{G}$. Similarly for $\cA$. As the decomposition multiplicities for $\cC$ and $\cA$ 
coincide, it follows that $\Phi$ is an equivalence when restricted to any $\mathtt{j}$. This implies that 
$\mathrm{Ker}(\mathbf{C}_{\mathcal{L}})=\mathrm{Ker}(\mathbf{C}_{\mathcal{L}_{\cccA}})$ which yields
$\mathrm{Ker}(\mathbf{C}_{\mathcal{L}})=\cI$. The claim follows.
\end{proof}

We end this subsection with an example of a cell $2$-representation, which is not strongly simple, but whose 
``image'' $2$-category is simple in the above sense.

\begin{example}
{\rm
Consider \cite[Example~7.1]{MM} in more detail for Dynkin type $B_2$. Keeping the notation from this example, we 
let $\cS$ denote the (strict) $2$-category defined as follows: it has one object $\mathtt{i}$, which we identify 
with (a small category equivalent to) the regular block $\mathcal{O}_0$ of the BGG category $\mathcal{O}$ in type 
$B_2$; its $1$-morphisms are projective functors $\theta$ on $\mathcal{O}_0$; and its $2$-morphisms are natural 
transformations of functors. Indecomposable projective functors are indexed by elements of the Weyl group $W$ of 
type $B_2$, which has elements $\{e,s,t,st,ts,sts,tst,stst\}$. Cells are again given by Kahzdan-Lusztig combinatorics,
the two-sided cells are $\mathcal{J}_e=\{e\}, \mathcal{J}_{s,t}=\{s,t,st,ts,sts,tst\}$ and $\mathcal{J}_{stst}=\{stst\}$.
The middle cell splits into two left cells $\mathcal{L}_1 = \{s,st,sts\}$ and $\mathcal{L}_2= \{t,ts,tst\}$
(note that our left cells are Kazhdan-Lusztig's right cells and vice versa). Right cells
are given by $\mathcal{R}_1=\{s,ts,sts \}$ and $\mathcal{R}_2=\{t,st,tst \}$, so this is not strongly regular. Indeed in
the cell $2$-representation $\overline{\mathbf{C}}_{\mathcal{L}_1}$, which is generated by $L_s$ for the Duflo involution
$s$, the simple module $L_{st}$ is annihilated by $\theta_s$ (and hence by $\theta_{sts}$ as well), and is mapped to 
the projective object indexed by $st$ in $\overline{\mathbf{C}}_{\mathcal{L}_1}$ by $\theta_t$. Applying $\theta_s$ 
to this projective object, we obtain the direct sum of the two projectives in $\overline{\mathbf{C}}_{\mathcal{L}_1}$
labeled by $s$ and $sts$. One can check by explicit computations that it is impossible to split this direct sum using 
only morphisms in the image of the action of $\cS$. As the functor $\theta_s\theta_t$ is indecomposable, this implies
that $\overline{\mathbf{C}}_{\mathcal{L}_1}$ is not strongly simple. On the other hand, Theorem~\ref{thm15} implies 
that $\cS/\mathrm{Ker}(\mathbf{C}_{\mathcal{L}_1})$ is $ \mathcal{J}_{s,t}$-simple. 
}
\end{example}

\subsection{Left $2$-ideals}\label{s5.5}

Let $\cA$ be a $\Bbbk$-linear $2$-category. A {\em left $2$-ideal} $\cI$ of $\cA$ consists of the same objects as $\cA$, 
and for each $\mathtt{i},\mathtt{j}\in \cI$ an ideal $\cI(\mathtt{i},\mathtt{j})$ in $\cA(\mathtt{i},\mathtt{j})$ such 
that $\cI$ is stable under left horizontal multiplication with both $1$- and $2$-morphisms in $\cA$. For example,
we can view $\cA(\mathtt{k},{}_-)$ as a left $2$-ideal $\cI_{\mathtt{k}}:=\cI^{\cA}_{\mathtt{k}}$ in $\cA$ in the following way:
\begin{displaymath}
\cI_{\mathtt{k}}(\mathtt{i},\mathtt{j}):=
\begin{cases}
\cA(\mathtt{k},\mathtt{j}),& \mathtt{i}=\mathtt{k};\\
0,&\text{else}.
\end{cases}
\end{displaymath}
It is easy to see that $\cI_{\mathtt{k}}$ is generated, as a left $2$-ideal, by the $2$-morphism 
$\mathrm{id}_{\mathbbm{1}_{\mathtt{k}}}$.

Let $\cC$ be a fiat category. Given a left $2$-ideal $\cJ$ contained in $\cI_{\mathtt{i}}$, we can define 
the quotient $2$-representation  $\mathbb{P}_{\mathtt{i}}/\cJ$ by letting 
\begin{displaymath}
(\mathbb{P}_{\mathtt{i}}/\cJ)(\mathtt{j}):=\mathbb{P}_{\mathtt{i}}(\mathtt{j})/\cJ(\mathtt{i},\mathtt{j})
\end{displaymath}
with the action of $\cC$ induced from that on $\mathbb{P}_{\mathtt{i}}$. It is easy to check that 
$\mathbb{P}_{\mathtt{i}}/\cJ$ is finitary.

Left $2$-ideals appear naturally as annihilators of objects for $2$-representations. Let
$\mathbf{M}$ be a $2$-representation of $\cC$ and $X\in\mathbf{M}(\mathtt{i})$. Define
$\mathrm{Ann}_{\ccC}(X)$ as the set of all $2$-morphisms $\alpha$ such that either $\alpha=0$ or
$\mathbf{M}(\alpha)_X$ is defined and $\mathbf{M}(\alpha)_X=0$. Then it is easy to see that 
$\mathrm{Ann}_{\ccC}(X)$ has the natural structure of a left $2$-ideal of $\cC$. Note that
$\mathrm{Ann}_{\ccC}(X)\subset \cI_{\mathtt{i}}$ by definition.

\begin{proposition}\label{prop42}
Assume that $\cC$ has a unique maximal two-sided cell $\mathcal{J}$ and let $\mathcal{L}$ be 
a left cell in $\mathcal{J}$. Then for $\mathtt{i}:=\mathtt{i}_{\mathcal{L}}$ there is a unique
maximal left $2$-ideal $\cI$ of $\cC$ contained in $\cI_{\mathtt{i}}$ such that 
$\cI$ does not contain $\mathrm{id}_{\mathrm{G}_{\mathcal{L}}}$.
\end{proposition}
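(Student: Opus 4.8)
The plan is to mimic the structure of the proof of Theorem~\ref{thm12}, but at the level of left $2$-ideals rather than two-sided $2$-ideals. First I would observe that the collection of left $2$-ideals $\cJ\subset\cI_{\mathtt{i}}$ not containing $\mathrm{id}_{\mathrm{G}_{\mathcal{L}}}$ is nonempty (it contains the zero ideal) and is closed under arbitrary sums; so it suffices to show that the sum $\cI$ of all such left $2$-ideals still does not contain $\mathrm{id}_{\mathrm{G}_{\mathcal{L}}}$, and this $\cI$ is then automatically the unique maximal one. As in Lemma~\ref{lem14}, the key point is a Jacobson-radical argument: if $\cJ$ is a left $2$-ideal contained in $\cI_{\mathtt{i}}$ with $\mathrm{id}_{\mathrm{G}_{\mathcal{L}}}\notin\mathrm{Hom}_{\cJ}(\mathrm{G}_{\mathcal{L}},\mathrm{G}_{\mathcal{L}})$, then $\mathrm{Hom}_{\cJ}(\mathrm{G}_{\mathcal{L}},\mathrm{G}_{\mathcal{L}})$ is a proper ideal of the local finite-dimensional algebra $\mathrm{Hom}_{\cC(\mathtt{i},\mathtt{i})}(\mathrm{G}_{\mathcal{L}},\mathrm{G}_{\mathcal{L}})$, hence is contained in its Jacobson radical; since the radical is stable under sums of subideals, the sum $\cI$ also has $\mathrm{Hom}_{\cI}(\mathrm{G}_{\mathcal{L}},\mathrm{G}_{\mathcal{L}})$ contained in the radical, so $\mathrm{id}_{\mathrm{G}_{\mathcal{L}}}\notin\cI$.

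The genuinely new ingredient compared to Theorem~\ref{thm12} is that a left $2$-ideal has many components $\mathrm{Hom}_{\cJ}(\mathrm{F},\mathrm{G})$, not just the endomorphism ring of a single $1$-morphism, so I need to argue that controlling the $(\mathrm{G}_{\mathcal{L}},\mathrm{G}_{\mathcal{L}})$-component controls everything. The plan here is to use the left-cell structure together with the adjunction morphisms. Concretely, for a left $2$-ideal $\cJ\subset\cI_{\mathtt{i}}$, any $2$-morphism in $\mathrm{Hom}_{\cJ}(\mathrm{F},\mathrm{G})$ can be ``transported'' using unit/counit morphisms: precomposing horizontally with $2$-morphisms built from the adjunction data for $\mathrm{F}$ (using that $\cJ$ is stable under left horizontal multiplication) lets one land any element of an arbitrary component into the component $\mathrm{Hom}_{\cJ}(\mathrm{G}_{\mathcal{L}},\mathrm{H})$ for appropriate $\mathrm{H}$, and then, because $\mathcal{J}$ is the unique maximal two-sided cell and $\mathrm{G}_{\mathcal{L}}$ is the Duflo involution in $\mathcal{L}$, one can detect whether such a $2$-morphism is ``large'' (i.e.\ whether it forces $\mathrm{id}_{\mathrm{G}_{\mathcal{L}}}\in\cJ$) by the same radical argument applied on the nose to the algebra $\mathrm{Hom}_{\cC(\mathtt{i},\mathtt{i})}(\mathrm{G}_{\mathcal{L}},\mathrm{G}_{\mathcal{L}})$. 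It may be cleaner to package this as: a left $2$-ideal $\cJ\subset\cI_{\mathtt{i}}$ contains $\mathrm{id}_{\mathrm{G}_{\mathcal{L}}}$ if and only if $\cJ=\cI_{\mathtt{i}}$ --- using that $\mathrm{G}_{\mathcal{L}}$ generates $\cI_{\mathtt{i}}$ as a left $2$-ideal up to the radical, which should follow from $\mathcal{J}$ being the top two-sided cell and from $\mathbbm{1}_{\mathtt{i}}\geq_L\mathrm{G}_{\mathcal{L}}$ together with \cite[Proposition~17]{MM}.

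Once the sum $\cI$ is shown not to contain $\mathrm{id}_{\mathrm{G}_{\mathcal{L}}}$, maximality and uniqueness are formal: any left $2$-ideal $\cJ\subset\cI_{\mathtt{i}}$ with $\mathrm{id}_{\mathrm{G}_{\mathcal{L}}}\notin\cJ$ is one of the ideals summed to form $\cI$, hence $\cJ\subset\cI$; and if $\cI'$ were another maximal such left $2$-ideal then $\cI'\subset\cI$ by the same token and then maximality of $\cI'$ forces $\cI'=\cI$.

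I expect the main obstacle to be the ``transport'' step: making precise, using only left horizontal multiplication by $1$- and $2$-morphisms (that is what a left $2$-ideal is stable under) and the adjunction morphisms, that no proper left $2$-ideal inside $\cI_{\mathtt{i}}$ can be ``large away from $\mathrm{G}_{\mathcal{L}}$ while small at $\mathrm{G}_{\mathcal{L}}$''. The role of the hypothesis that $\mathcal{J}$ is the unique maximal two-sided cell is exactly to guarantee that every indecomposable $1$-morphism in $\cC(\mathtt{i},\mathtt{j})$ that is not annihilated is $\geq_L$-above $\mathrm{G}_{\mathcal{L}}$ in the relevant sense, so that all the $\mathrm{Hom}$-components of a left $2$-ideal are ``reachable'' from the $\mathrm{G}_{\mathcal{L}}$-component; pinning down this reachability cleanly, and checking it is compatible with the radical bound, is the delicate part.
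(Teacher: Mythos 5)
Your first paragraph is exactly the paper's proof: sum all left $2$-ideals $\cJ\subset\cI_{\mathtt{i}}$ with $\mathrm{id}_{\mathrm{G}_{\mathcal{L}}}\notin\cJ$, note that each component $\mathrm{Hom}_{\cJ}(\mathrm{G}_{\mathcal{L}},\mathrm{G}_{\mathcal{L}})$ is a proper ideal of the local finite-dimensional algebra $\mathrm{End}(\mathrm{G}_{\mathcal{L}})$ and hence lies in its Jacobson radical, and use that the radical absorbs sums of ideals; maximality and uniqueness are then formal. That argument is already complete, so you should stop there.

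The ``transport'' step you flag as the main obstacle is a red herring. Membership in the family being summed is tested \emph{only} on the single component $\mathrm{Hom}(\mathrm{G}_{\mathcal{L}},\mathrm{G}_{\mathcal{L}})$; you never need to control the other components $\mathrm{Hom}_{\cJ}(\mathrm{F},\mathrm{G})$, because a sum of left $2$-ideals contained in $\cI_{\mathtt{i}}$ is automatically a left $2$-ideal contained in $\cI_{\mathtt{i}}$, whatever its other components are. (The same is true in Lemma~\ref{lem14}: only the diagonal components at the $1$-morphisms of $\mathcal{J}$ are ever inspected there.) Moreover, the ``cleaner packaging'' you suggest --- that a left $2$-ideal $\cJ\subset\cI_{\mathtt{i}}$ contains $\mathrm{id}_{\mathrm{G}_{\mathcal{L}}}$ if and only if $\cJ=\cI_{\mathtt{i}}$ --- is false: the left $2$-ideal generated by $\mathrm{id}_{\mathrm{G}_{\mathcal{L}}}$ only picks up $\mathrm{id}_{\mathrm{F}}$ for $\mathrm{F}\geq_L\mathrm{G}_{\mathcal{L}}$, and in general does not contain $\mathrm{id}_{\mathbbm{1}_{\mathtt{i}}}$, so it is a proper subideal of $\cI_{\mathtt{i}}$. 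The one genuinely useful refinement the paper records, which you could add, goes in the opposite direction: by the argument of Lemma~\ref{lem13} (left horizontal composition plus projection onto summands, using that any $\mathrm{F}\in\mathcal{L}$ has $\mathrm{G}_{\mathcal{L}}$ as a summand of some $\mathrm{H}\circ\mathrm{F}$), an ideal avoiding $\mathrm{id}_{\mathrm{G}_{\mathcal{L}}}$ in fact avoids $\mathrm{id}_{\mathrm{F}}$ for every $\mathrm{F}\in\mathcal{L}$; this is what is actually invoked in Subsection~\ref{s5.4}.
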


\begin{proof}
Let $\cI\subset \cI_{\mathtt{i}}$ be a left $2$-ideal
of $\cC$ which does not contain $\mathrm{id}_{\mathrm{G}_{\mathcal{L}}}$. 
Similarly to the proof of Lemma~\ref{lem13} one shows that $\cI$ does not contain any $\mathrm{id}_{\mathrm{F}}$
for $\mathrm{F}\in \mathcal{L}$. This implies that the part of $\cI$ inside the local algebra
$\mathrm{End}_{\ccC(\mathtt{i},\mathtt{j})}(\mathrm{F})$ is properly contained in the radical.
This property is preserved if one consider the sum of all $\cI$'s. The claim follows.
\end{proof}

\subsection{An alternative construction of cell modules}\label{s5.4}

Let $\mathcal{J}$ be a two-sided cell in $\cC$ and $\mathcal{L}$ a left cell in $\mathcal{J}$. Let 
$\cI$ be the $2$-ideal in $\cC$ given by Theorem~\ref{thm12} (i.e. $\cA:=\cC/\cI$ is $\mathcal{J}$-simple). 
For $\mathtt{i}:=\mathtt{i}_{\mathcal{L}}$ let $\cJ$ be the unique maximal left $2$-ideal of $\cA$ contained
in $\cI^{\ccA}_{\mathtt{i}}$ which does not contain $\mathrm{id}_{\mathrm{F}}$ for any $\mathrm{F}\in \mathcal{L}$.
Then the $2$-representation $\mathbb{P}_{\mathtt{i}}^{\ccA}/\cJ$ of $\cA$ has the natural structure of a 
$2$-representation of $\cC$ via the quotient $2$-functor $\cC\tto \cA$. 
For $\mathtt{j}\in\cC$ let $\mathbf{D}_{\mathcal{L}}(\mathtt{j})$ denote the full subcategory of 
$(\mathbb{P}_{\mathtt{i}}^{\ccA}/\cJ)(\mathtt{j})$ containing all $1$-morphisms $\mathrm{F}$
from $\cA(\mathtt{i},\mathtt{j})$ corresponding to $\mathcal{L}$. Then $\mathbf{D}_{\mathcal{L}}$
inherits the structure of a $2$-representation of $\cC$ by restriction.

\begin{proposition}\label{prop16}
The $2$-rep\-re\-sen\-ta\-ti\-ons $\mathbf{D}_{\mathcal{L}}$ 
and $\mathbf{C}_{\mathcal{L}}$ are elementary equivalent.
\end{proposition}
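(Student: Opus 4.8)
The plan is to exhibit an explicit chain of elementary equivalences connecting $\mathbf{D}_{\mathcal{L}}$ with $\mathbf{C}_{\mathcal{L}}$, using the same strategy as in the proofs of Proposition~\ref{prop10} and Theorem~\ref{thm15}. First I would invoke the Yoneda lemma (Lemma~\ref{lem4}) to produce a canonical homomorphism $\mathbb{P}_{\mathtt{i}}^{\ccA}\to \overline{\mathbf{C}}_{\mathcal{L}_{\ccA}}$, where $\mathcal{L}_{\ccA}$ is the image of $\mathcal{L}$ in $\cA=\cC/\cI$, by sending $\mathbbm{1}_{\mathtt{i}}$ to the simple object $L_{\mathrm{G}_{\mathcal{L}_{\ccA}}}$; equivalently one lands in $(\overline{\mathbf{C}}_{\mathcal{L}_{\ccA}})_{pr}=\mathbf{C}_{\mathcal{L}_{\ccA}}$ since $1$-morphisms act projectively. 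Because $\cA$ is $\mathcal{J}$-simple by construction of $\cI$, the earlier results from \cite{MM} (in particular Proposition~17, Lemma~19 and Lemma~21) identify the annihilator of $\mathbbm{1}_{\mathtt{i}}$ under this action with a left $2$-ideal which, by Proposition~\ref{prop42}, is precisely the unique maximal left $2$-ideal $\cJ\subset\cI^{\ccA}_{\mathtt{i}}$ not containing $\mathrm{id}_{\mathrm{G}_{\mathcal{L}}}$. This gives an induced homomorphism $\mathbb{P}_{\mathtt{i}}^{\ccA}/\cJ\to \mathbf{C}_{\mathcal{L}_{\ccA}}$ which is faithful on the relevant hom-spaces.

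Next I would show this homomorphism restricts to an equivalence between $\mathbf{D}_{\mathcal{L}}$ and $\mathbf{C}_{\mathcal{L}_{\ccA}}$. Fullness follows as in the proof of Theorem~\ref{thm15}: the indecomposable objects of $\mathbf{D}_{\mathcal{L}}(\mathtt{j})$ are the images of the $\mathrm{F}\in\mathcal{L}$ with $\mathrm{F}\in\cA(\mathtt{i},\mathtt{j})$, which map to the objects $\mathrm{F}\,L$ for $L$ the simple top of $0\to\mathrm{G}_{\mathcal{L}}\,L_{\mathrm{G}_{\mathcal{L}}}$; faithfulness follows because by construction of $\cJ$ no morphism between such objects is killed, and dimension counts via the adjunction isomorphism
\begin{displaymath}
\mathrm{Hom}(\mathrm{F}\,L,\mathrm{G}\,L)\cong\mathrm{Hom}(L,\mathrm{F}^*\circ\mathrm{G}\,L)
\end{displaymath}
together with \cite[Lemma~19]{MM} show the hom-spaces on both sides have equal dimension. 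Finally, since $\cC$ acts on $\mathbf{D}_{\mathcal{L}}$ and on $\mathbf{C}_{\mathcal{L}_{\ccA}}$ through the quotient $\cC\tto\cA$, this is an equivalence of $2$-representations of $\cC$, so $\mathbf{D}_{\mathcal{L}}$ and $\mathbf{C}_{\mathcal{L}_{\ccA}}$ are elementary equivalent. To close the loop, Proposition~\ref{prop10} (applied inside $\cA$, or rather its proof) together with Theorem~\ref{thm15} — which gives $\mathrm{Ker}(\mathbf{C}_{\mathcal{L}})=\cI$ and hence that $\mathbf{C}_{\mathcal{L}}$ factors through $\cA$ and coincides, as a $2$-representation of $\cA$, with $\mathbf{C}_{\mathcal{L}_{\ccA}}$ — yields that $\mathbf{C}_{\mathcal{L}}$ and $\mathbf{C}_{\mathcal{L}_{\ccA}}$ are elementary equivalent. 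Concatenating the two elementary equivalences proves the proposition.

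The main obstacle I anticipate is the identification of the maximal left $2$-ideal $\cJ$ from Proposition~\ref{prop42} with the precise annihilator of $\mathbbm{1}_{\mathtt{i}}$ in the target cell $2$-representation, i.e.\ verifying that the quotient $\mathbb{P}_{\mathtt{i}}^{\ccA}/\cJ$ genuinely surjects onto $\mathbf{C}_{\mathcal{L}_{\ccA}}$ with the expected kernel on the subcategory indexed by $\mathcal{L}$. This requires carefully combining $\mathcal{J}$-simplicity of $\cA$ with the structural results of \cite{MM} on cell $2$-representations, and checking that passing to the subcategory $\mathbf{D}_{\mathcal{L}}$ does not lose or introduce morphisms — this is where the strong regularity-type multiplicity computations (or their replacement by the $\mathcal{J}$-simple hypothesis) must be invoked. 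Once that matching is established, the rest is a routine assembly of elementary equivalences as above.
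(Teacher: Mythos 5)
Your overall strategy coincides with the paper's: use the Yoneda lemma to map $\mathbb{P}_{\mathtt{i}}^{\ccA}$ to the cell $2$-representation, show that on the hom-spaces between $1$-morphisms in $\mathcal{L}$ the kernel is exactly $\cJ$, and conclude by restricting to $\mathbf{D}_{\mathcal{L}}$. The problem is that the step you yourself flag as ``the main obstacle'' --- identifying $\cJ$ with the annihilator on these hom-spaces --- is precisely the nontrivial content of the proof, and your proposal does not close it. The inclusion of the annihilator $\cJ'$ into $\cJ$ is immediate from maximality, and \cite[Lemmas~19 and 21]{MM} give the dimension of the target hom-space as $m$, the multiplicity of $\mathrm{G}_{\mathcal{L}}$ in $\mathrm{G}^*\circ\mathrm{F}$; but your assertion that ``by construction of $\cJ$ no morphism between such objects is killed'' is not an argument for the reverse inclusion. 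What is needed, and what the paper supplies, is the lower bound $\dim \mathrm{Hom}_{\mathbf{D}_{\mathcal{L}}(\mathtt{j})}(\mathrm{F},\mathrm{G})\geq m$: by adjunction this space is $\mathrm{Hom}_{(\mathbb{P}_{\mathtt{i}}^{\cccA}/\ccJ)(\mathtt{i})}(\mathrm{G}^*\circ\mathrm{F},\mathbbm{1}_{\mathtt{i}})$, and by \cite[Proposition~17(a)]{MM} there is a nonzero $\alpha:\mathrm{G}_{\mathcal{L}}\to\mathbbm{1}_{\mathtt{i}}$ such that $\mathrm{F}(\alpha)$ is an epimorphism and $\mathrm{F}$ is a direct summand of $\mathrm{F}\circ\mathrm{G}_{\mathcal{L}}$; if $\alpha$ lay in $\cJ$ then, $\cJ$ being a left $2$-ideal, composing $\mathrm{F}(\alpha)$ with the inclusion of that summand would force $\mathrm{id}_{\mathrm{F}}\in\cJ$, a contradiction. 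Hence $\alpha$ survives in the quotient, each of the $m$ copies of $\mathrm{G}_{\mathcal{L}}$ inside $\mathrm{G}^*\circ\mathrm{F}$ contributes, and the dimensions match. Without this argument the claimed faithfulness of the restriction to $\mathbf{D}_{\mathcal{L}}$ is unsupported.

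A secondary point: you conclude by concatenating an equivalence $\mathbf{D}_{\mathcal{L}}\to\mathbf{C}_{\mathcal{L}_{\ccA}}$ with a comparison of $\mathbf{C}_{\mathcal{L}_{\ccA}}$ and $\mathbf{C}_{\mathcal{L}}$, which only yields that the two $2$-representations are \emph{equivalent}, whereas the statement asserts \emph{elementary} equivalence. The paper avoids this by observing (via Theorem~\ref{thm15}) that $\cI$ annihilates $L_{\mathrm{G}_{\mathcal{L}}}$, so that $\mathbf{C}_{\mathcal{L}}$ is already a $2$-representation of $\cA$ and a single homomorphism out of $\mathbb{P}_{\mathtt{i}}^{\ccA}/\cJ$ restricts directly to the desired elementary equivalence $\mathbf{D}_{\mathcal{L}}\to\mathbf{C}_{\mathcal{L}}$.
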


\begin{proof}
From Theorem~\ref{thm15} it follows that $\cI$ annihilates $L_{\mathrm{G}_{\mathcal{L}}}$ in $\overline{\mathbb{P}}_{\mathtt{i}}$. Therefore, 
$\mathbf{C}_{\mathcal{L}} = (\overline{\mathbb{P}}_{\mathtt{i}})_{\mathrm{G}_{\mathcal{L}}\,L_{\mathrm{G}_{\mathcal{L}}}}$ is indeed a $2$-representation of $\cA$. Mapping $\mathbbm{1}_{\mathtt{i}}$ to $L_{\mathrm{G}_{\mathcal{L}}}$ extends to a homomorphism $\Phi$ from $\mathbb{P}_{\mathtt{i}}^{\ccA}$ to
$(\overline{\mathbb{P}}_{\mathtt{i}})_{L_{\mathrm{G}_{\mathcal{L}}}}$ (the latter considered as a
$2$-representation of $\cA$). Let $L$ be the simple head of the indecomposable projective object 
\begin{displaymath}
0\to \mathrm{G}_{\mathcal{L}}\,L_{\mathrm{G}_{\mathcal{L}}}\in
\overline{(\overline{\mathbb{P}}_{\mathtt{i}})_{L_{\mathrm{G}_{\mathcal{L}}}}}(\mathtt{i}).
\end{displaymath}

We obviously have  $\cJ':=\mathrm{Ann}_{\ccA}(L)\subset\cJ$. We further claim that for
any $\mathrm{F},\mathrm{G}\in\mathcal{L}\cap\cC(\mathtt{i},\mathtt{j})$ we even have
\begin{equation}\label{eq17}
\mathrm{Hom}_{\ccJ'(\mathtt{i},\mathtt{j})}(\mathrm{F},\mathrm{G})=\mathrm{Hom}_{\ccJ(\mathtt{i},\mathtt{j})}(\mathrm{F},\mathrm{G}).
\end{equation}

Indeed, by \cite[Lemma~21]{MM}, $\mathrm{Hom}_{\ccA(\mathtt{i},\mathtt{j})}(\mathrm{F},\mathrm{G})/
\mathrm{Hom}_{\ccJ'}(\mathrm{F},\mathrm{G})$ is mapped bijectively onto the space
$\mathrm{Hom}_{\mathbf{C}_{\mathcal{L}}(\mathtt{j})}
(\mathrm{F}\,L_{\mathrm{G}_{\mathcal{L}}},\mathrm{G}\,L_{\mathrm{G}_{\mathcal{L}}})$, so, 
by \cite[Lemma~19]{MM}, the codimension of the left space
in $\mathrm{Hom}_{\ccA(\mathtt{i},\mathtt{j})}(\mathrm{F},\mathrm{G})$ equals the multiplicity
$m$ of $\mathrm{G}_{\mathcal{L}}$ in $\mathrm{G}^*\circ\mathrm{F}$.

On the other hand, using adjunction, we obtain 
\begin{displaymath}
\begin{array}{rcl}
\mathrm{Hom}_{\mathbf{D}_{\mathcal{L}}(\mathtt{j})}(\mathrm{F},\mathrm{G})&=&
\mathrm{Hom}_{(\mathbb{P}_{\mathtt{i}}^{\cccA}/\ccJ)(\mathtt{j})}(\mathrm{F},\mathrm{G})\\&\cong&
\mathrm{Hom}_{(\mathbb{P}_{\mathtt{i}}^{\cccA}/\ccJ)(\mathtt{i})}(\mathrm{G}^*\circ\mathrm{F},
\mathbbm{1}_{\mathtt{i}}).
\end{array}
\end{displaymath}
By \cite[Proposition~17(a)]{MM}, there is $0\neq\alpha\in 
\mathrm{Hom}_{\mathbb{P}_{\mathtt{i}}^{\cccA}(\mathtt{i})}(\mathrm{G}_{\mathcal{L}},
\mathbbm{1}_{\mathtt{i}})$ such that the $2$-morphism $\mathrm{F}(\alpha):\mathrm{F}\circ 
\mathrm{G}_{\mathcal{L}}\to \mathrm{F}$ is an epimorphism for any $\mathrm{F}\in\mathcal{L}$, 
so $F$ appears as a direct summand of $\mathrm{F}\circ \mathrm{G}_{\mathcal{L}}$. If $\cJ$ contained $\alpha$, 
being a left $2$-ideal, $\cJ$ would contain $\mathrm{F}(\alpha)$ as well, and hence, composing it 
with an inclusion from a direct summand, $\cJ$ would contain $\mathrm{id}_{\mathrm{F}}$, a contradiction. 
Hence $\alpha\not\in\cJ$, which yields
$\mathrm{Hom}_{(\mathbb{P}_{\mathtt{i}}^{\cccA}/\ccJ)(\mathtt{i})}(\mathrm{G}_{\mathcal{L}},
\mathbbm{1}_{\mathtt{i}})\neq 0$, so $\dim \mathrm{Hom}_{(\mathbb{P}_{\mathtt{i}}^{\cccA}/\ccJ)(\mathtt{i})}(\mathrm{G}^*\circ\mathrm{F},
\mathbbm{1}_{\mathtt{i}}) \geq m$, and thus 
\begin{displaymath}
\dim \mathrm{Hom}_{\mathbf{D}_{\mathcal{L}}(\mathtt{j})}(\mathrm{F},\mathrm{G})\geq m=
\dim \mathrm{Hom}_{\mathbf{C}_{\mathcal{L}}(\mathtt{j})}(\mathrm{F}\,L_{\mathrm{G}_{\mathcal{L}}},
\mathrm{G}\,L_{\mathrm{G}_{\mathcal{L}}}).
\end{displaymath}
Taking codimensions in $\mathrm{Hom}_{\ccA(\mathtt{i},\mathtt{j})}(\mathrm{F},\mathrm{G})$, this implies
$$\dim\mathrm{Hom}_{\ccJ(\mathtt{i},\mathtt{j})}(\mathrm{F},\mathrm{G}) \leq \dim \mathrm{Hom}_{\ccJ'(\mathtt{i},\mathtt{j})}(\mathrm{F},\mathrm{G})$$
and hence proves \eqref{eq17}. It follows that  $\Phi$  factors over $\mathbb{P}_{\mathtt{i}}^{\ccA}/\cJ$ and
restricts to an elementary equivalence from $\mathbf{D}_{\mathcal{L}}$ to $\mathbf{C}_{\mathcal{L}}$.
\end{proof}

\section{Further examples and constructions}\label{s6}

\subsection{$\mathfrak{sl}_2$-categorification}\label{s6.1}

Fix $n\in\{0,1,2,\dots\}$. Denote by $\mathtt{C}_n$ the complex coinvariant algebra of the symmetric group $S_n$, 
that is the quotient $\mathbb{C}[x_1,x_2,\dots,x_n]/I$, where the ideal $I$ is generated by all homogeneous 
$S_n$-symmetric polynomials of positive degree. For $i=1,2,\dots,n-1$ let $s_i$ denote the transposition $(i,i+1)$ 
and $S_n^i$ the subgroup of $S_n$ generated by all $s_j$, $j\neq i$. Set $\mathtt{C}_n^0=\mathtt{C}_n^n:=\mathbb{C}$ 
and for $i=1,2,\dots,n-1$ define $\mathtt{C}_n^i$ and the subalgebra of $S_n^i$-invariants in $\mathtt{C}_n$ (note 
that $\dim \mathtt{C}_n^i=\binom{n}{i}$). Similarly, for different $i,j$ we denote by $S_n^{i,j}$ the subgroup of 
$S_n$ generated by all $s_k$, $k\neq i,j$, and define $\mathtt{C}_n^{i,j}$ to be the subalgebra $S_n^{i,j}$-invariants 
in $\mathtt{C}_n$. It is well known that both $\mathtt{C}_n$, $\mathtt{C}_n^i$ and $\mathtt{C}_n^{i,j}$ are symmetric 
algebras and that $\mathtt{C}_n^{i,i+1}$ is free both as a $\mathtt{C}_n^{i}$- and $\mathtt{C}_n^{i+1}$-module (this
follows from the geometric interpretation of the coinvariant algebra as the cohomology algebra of a flag variety, 
see e.g \cite{Hi}).

Define the $2$-category $\cB_n$ via its defining representation as follows: The objects of $\cB_n$ are
$\mathtt{0},\mathtt{1},\dots,\mathtt{n}$ where we identify the object $\mathtt{i}$ with some small category
$\mathcal{C}_i$ equivalent to the category of $\mathtt{C}_n^i$-modules. As generating $1$-morphisms between 
$\mathcal{C}_i$ and $\mathcal{C}_{i+1}$ we take the functors
\begin{equation}\label{eq3}
\mathrm{F}_{i}^{(1)}:=\mathrm{Res}_{\mathtt{C}_n^{i+1}}^{\mathtt{C}_n^{i,i+1}}\circ
\mathrm{Ind}_{\mathtt{C}_n^i}^{\mathtt{C}_n^{i,i+1}} :\mathcal{C}_i\to  
\mathcal{C}_{i+1}\,\text{ and }\,
\mathrm{E}_{i+1}^{(1)}:=\mathrm{Res}_{\mathtt{C}_n^{i}}^{\mathtt{C}_n^{i,i+1}}\circ
\mathrm{Ind}_{\mathtt{C}_n^{i+1}}^{\mathtt{C}_n^{i,i+1}} :\mathcal{C}_{i+1}\to  
\mathcal{C}_{i}.
\end{equation}
We let $\cB_n$ be the minimal full fully additive $2$-subcategory of the endomorphism $2$-category of the
$\mathcal{C}_i$'s containing these generating functors and closed with respect to isomorphism of $1$-morphisms.
Thus, $2$-morphisms in $\cB_n$ are all natural transformations of functors.

\begin{remark}\label{rem17}
{\rm 
The category $\cB_n$ has an alternative description via the BGG category $\mathcal{O}$ for $\mathfrak{gl}_n$.
Let $\mathfrak{gl}_n=\mathfrak{n}_-\oplus\mathfrak{h}\oplus\mathfrak{n}_+$ be the standard triangular decomposition of
$\mathfrak{gl}_n$. Fix a basis $\{e_{ii}\}\in \mathfrak{h}$ and the corresponding dual basis in $\mathfrak{h}^*$.
Denote by $\mathcal{O}^{i}$ the block of the category $\mathcal{O}$ associated to this triangular decomposition
(see \cite{BGG,Hu}) containing simple highest weight modules $L(\lambda)$ (with highest weight $\lambda-\rho$ where
$\rho$ is the half sum of all positive roots), where $\lambda\in \mathfrak{h}^*$ is a $0$-$1$ vector (with respect to 
the basis chosen above) with exactly $i$ components equal to $0$. The block $\mathcal{O}^{i}$ has a unique 
(up to isomorphism)  indecomposable projective-injective module, which we denote by  $P_i$, and 
$\mathrm{End}_{\mathcal{O}}(P_i)\cong \mathtt{C}_n^{i}$. The full subcategory $\mathcal{P}_i$ of $\mathcal{O}^{i}$
consisting of all modules with projective-injective presentation is then equivalent to 
$\mathtt{C}_n^{i}\text{-}\mathrm{mod}$. This establishes a connection to $\cB_n$ (one has to choose appropriate small
versions of all $\mathcal{P}_i$'s). Generating $1$-morphisms from \eqref{eq3} are realized via projective functors
between different blocks of $\mathcal{O}$ in the sense of \cite{BG} (in this case these are just tensoring with the 
natural representation of $\mathfrak{gl}_n$ or its dual, followed by the projection onto an appropriate block). 
We refer to \cite{BFK,FKS} for further details.
}
\end{remark}

\begin{remark}\label{rem18}
{\rm 
Some other versions of $\cB_n$ can be found implicitly in \cite{CR,La} (and some other papers). One essential 
difference with all these papers is that they also prescribe generating $2$-morphisms while we just take all of them
(following \cite{BFK}).
}
\end{remark}

Let $i\in\{0,1,\dots,n\}$ and $k,l\in\{0,1,\dots,n\}$ be such that $i+k,i-l\in \{0,1,\dots,n\}$. Similarly to the above
define
\begin{displaymath}
\mathrm{F}_{i}^{(k)}:=\mathrm{Res}_{\mathtt{C}_n^{i+k}}^{\mathtt{C}_n^{i,i+k}}\circ
\mathrm{Ind}_{\mathtt{C}_n^i}^{\mathtt{C}_n^{i,i+k}} :\mathcal{C}_i\to  
\mathcal{C}_{i+k}\,\text{ and }\,
\mathrm{E}_{i+k}^{(k)}:=\mathrm{Res}_{\mathtt{C}_n^{i}}^{\mathtt{C}_n^{i,i+k}}\circ
\mathrm{Ind}_{\mathtt{C}_n^{i+k}}^{\mathtt{C}_n^{i,i+k}} :\mathcal{C}_{i+k}\to  
\mathcal{C}_{i}.
\end{displaymath}
It is convenient to write $\mathrm{F}_{i}^{(0)}$ and $\mathrm{E}_{i}^{(0)}$ for $\mathbbm{1}_{\mathtt{i}}$.
From the facts that $\mathtt{C}_n$, $\mathtt{C}_n^i$ and $\mathtt{C}_n^{i,j}$ are symmetric and that 
$\mathtt{C}_n^{i,i+1}$ is free both as a $\mathtt{C}_n^{i}$- and $\mathtt{C}_n^{i+1}$-module it follows that 
the functor $\mathrm{F}_{i}^{(k)}$ is both left and right adjoint (i.e. biadjoint) to $\mathrm{E}_{i+k}^{(k)}$.
Indecomposable $1$-morphisms in $\cB_n$ are given by the following claim proved in \cite[3.1.3]{BFK}:

\begin{proposition}\label{prop17}
Let $i,j\in\{0,1,\dots,n\}$. The following is a complete and irredundant list of indecomposable $1$-morphisms 
in $\cB_n(\mathtt{i},\mathtt{j})$, up to isomorphism:
\begin{enumerate}[(a)]
\item\label{prop17.1} $\mathrm{F}_i^{(j-i)}, \mathrm{F}_{i+1}^{(j-i+1)}\mathrm{E}_i^{(1)},
\mathrm{F}_{i+2}^{(j-i+2)}\mathrm{E}_i^{(2)},\dots,\mathrm{F}_{0}^{(j)}\mathrm{E}_i^{(i)}$, if $i<j$ and $i< n-j$. 
\item\label{prop17.2} $\mathrm{F}_i^{(j-i)}, \mathrm{E}_{j+1}^{(1)}\mathrm{F}_{i}^{(j-i+1)},
\mathrm{E}_{j+2}^{(2)}\mathrm{F}_{i}^{(j-i+2)},\dots,\mathrm{E}_n^{(n-j)}\mathrm{F}_{i}^{(n-i)}$,  
if $i<j$ and $i \geq n-j$. 
\item\label{prop17.3} $\mathrm{E}_i^{(i-j)}, \mathrm{E}_{i+1}^{(i-j+1)}\mathrm{F}_{i}^{(1)},
\mathrm{E}_{i+2}^{(i-j+2)}\mathrm{F}_{i}^{(2)},\dots,\mathrm{E}_{n}^{(n-j)}\mathrm{F}_{i}^{(n-i)}$ ,
if $i>j$ and $j\geq n-i$. 
\item\label{prop17.4} $\mathrm{E}_i^{(i-j)}, \mathrm{F}_{j-1}^{(1)}\mathrm{E}_{i}^{(i-j+1)},
\mathrm{F}_{j-2}^{(2)}\mathrm{E}_{i}^{(i-j+2)},\dots,\mathrm{F}_{0}^{(j)}\mathrm{E}_{i}^{(i)}$, 
if $i>j$ and $i< n-j$. 
\item\label{prop17.5} $\mathbbm{1}_{\mathtt{i}}, \mathrm{F}_{i-1}^{(1)}\mathrm{E}_{i}^{(1)},
\mathrm{F}_{i-2}^{(2)}\mathrm{E}_{i}^{(2)},\dots,\mathrm{F}_{0}^{(i)}\mathrm{E}_{i}^{(i)}$, if $i=j$ and $i< n-i$.
\item\label{prop17.6} $\mathbbm{1}_{\mathtt{i}}, \mathrm{E}_{i+1}^{(1)}\mathrm{F}_{i}^{(1)},
\mathrm{E}_{i+2}^{(2)}\mathrm{F}_{i}^{(2)},\dots,\mathrm{E}_{n}^{(n-i)}\mathrm{F}_{n-i}^{(n-i)}$, 
if $i=j$ and $i\geq n-i$.
\end{enumerate}
\end{proposition}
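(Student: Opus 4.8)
The plan has four steps: establish the basic decomposition rules for composites of the generating $1$-morphisms; use them to see that every composite of generators is a finite direct sum of the listed $1$-morphisms (exhaustiveness); prove that each listed $1$-morphism is indecomposable; and count to get irredundancy.

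First I would record the categorified $\mathfrak{sl}_2$-relations satisfied by the generators \eqref{eq3}. From the facts recalled just before the statement --- that $\mathtt{C}_n$, the $\mathtt{C}_n^i$ and the $\mathtt{C}_n^{i,j}$ are symmetric algebras, and that $\mathtt{C}_n^{i,i+1}$ is free of rank two both as a $\mathtt{C}_n^i$- and as a $\mathtt{C}_n^{i+1}$-module --- a base-change (Mackey-type) computation of the relevant composites of induction and restriction functors gives, for admissible indices, the divided-power rule
\begin{displaymath}
\mathrm{F}_{i+k}^{(l)}\circ\mathrm{F}_{i}^{(k)}\cong \bigl(\mathrm{F}_{i}^{(k+l)}\bigr)^{\oplus\binom{k+l}{k}},\qquad
\mathrm{E}_{i-k}^{(l)}\circ\mathrm{E}_{i}^{(k)}\cong \bigl(\mathrm{E}_{i}^{(k+l)}\bigr)^{\oplus\binom{k+l}{k}},
\end{displaymath}
and the $\mathfrak{sl}_2$-relation $\mathrm{E}_{i+1}^{(1)}\circ\mathrm{F}_i^{(1)}\cong \mathrm{F}_{i-1}^{(1)}\circ\mathrm{E}_i^{(1)}\oplus\mathbbm{1}_{\mathtt{i}}^{\oplus(n-2i)}$ on $\mathcal{C}_i$ when $i\le n-i$, together with its counterpart with $\mathrm{E}$ and $\mathrm{F}$ exchanged when $i\ge n-i$ (with $1$-morphisms factoring through a nonexistent object read as zero). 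Biadjointness of $\mathrm{F}_i^{(k)}$ and $\mathrm{E}_{i+k}^{(k)}$, noted before the statement, is used throughout; iterating these relations one also obtains the general straightening identities expressing $\mathrm{E}^{(a)}\circ\mathrm{F}^{(b)}$ in terms of the $\mathrm{F}^{(\cdot)}\circ\mathrm{E}^{(\cdot)}$ and vice versa.

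Next I would argue exhaustiveness. By construction every $1$-morphism of $\cB_n(\mathtt{i},\mathtt{j})$ is a direct summand of a composite of generating $1$-morphisms, i.e. of a word in the $\mathrm{E}^{(1)}$'s and $\mathrm{F}^{(1)}$'s tracing a lattice path from $\mathtt{i}$ to $\mathtt{j}$ on $\{0,1,\dots,n\}$. The straightening identities collapse every such word into a direct sum of ``one-turn'' composites --- either $\mathrm{F}^{(b)}\circ\mathrm{E}^{(a)}$ (descend, then ascend) or $\mathrm{E}^{(b)}\circ\mathrm{F}^{(a)}$ (ascend, then descend) --- and the divided-power rule merges repeated steps into single divided powers. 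Which of the two one-turn shapes is the reduced one is governed by the sign of $i+j-n$, and the turning vertex is then forced; refining by the sign of $i-j$ one recovers exactly the six cases (a)--(f) and the displayed lists, in each of which the number of entries is $\min(i,j,n-i,n-j)+1$. Hence every $1$-morphism is a direct sum of the listed ones.

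The main obstacle is what remains: indecomposability of each listed $1$-morphism and the fact that distinct entries are non-isomorphic. Here the cleanest route is through the category $\mathcal{O}$ realization of Remark~\ref{rem17}: the generators \eqref{eq3} are the restrictions, to the subcategories $\mathcal{P}_i\simeq\mathtt{C}_n^i\text{-}\mathrm{mod}$ of modules with a projective-injective presentation, of projective functors between the parabolic blocks $\mathcal{O}^i$ --- in the sense of \cite{BG} --- given by tensoring with the natural $\mathfrak{gl}_n$-module or its dual and projecting onto a block. By the Bernstein--Gelfand classification the indecomposable projective functors between integral blocks of $\mathcal{O}$ are classified and their composites are again sums of such functors, so the only thing to verify is that each listed composite stays indecomposable after restriction to $\mathcal{P}_i$ and that distinct entries remain non-isomorphic there --- equivalently, that none of them is killed or split by the parabolic truncation. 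By adjunction this reduces to showing that certain concrete finite-dimensional algebras assembled from $\mathtt{C}_n$, the $\mathtt{C}_n^i$ and the $\mathtt{C}_n^{i,j}$ --- namely the endomorphism algebras of the functors in question, and the $\mathrm{Hom}$ spaces between distinct ones --- are respectively local (indeed connected positively graded) and contain no isomorphisms. This is precisely \cite[3.1.3]{BFK}, which one may either cite or reprove by the explicit coinvariant-algebra computation indicated; granting it, the entries of each list are pairwise non-isomorphic indecomposables, and, as a check, their number $\min(i,j,n-i,n-j)+1$ coincides with the count of indecomposable projective functors $\mathcal{O}^i\to\mathcal{O}^j$ surviving the truncation. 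This gives irredundancy and completes the proof.
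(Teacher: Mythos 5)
The paper does not actually prove this proposition: it is stated as ``the following claim proved in \cite[3.1.3]{BFK}'' and the burden is placed entirely on that reference. Your sketch is therefore not comparable to a proof in the paper, but it is a reasonable reconstruction of how the argument in \cite{BFK} goes, and its logical skeleton is sound: the relations you state are exactly \eqref{eq55} and \eqref{eq65} (up to cancellation of identity summands), the straightening/exhaustiveness step is the standard $\mathfrak{sl}_2$-combinatorics, your count $\min(i,j,n-i,n-j)+1$ matches each of the six lists, and for the genuinely hard part (indecomposability and pairwise non-isomorphism of the listed composites) you defer to the very same \cite[3.1.3]{BFK} that the paper cites, so in the end you and the authors are leaning on the same external input.

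One factual slip worth correcting: $\mathtt{C}_n^{i,i+1}$ is \emph{not} free of rank two over $\mathtt{C}_n^{i}$ and $\mathtt{C}_n^{i+1}$. Since $S_n^{i,i+1}\cong S_i\times S_1\times S_{n-i-1}$, one has $\dim\mathtt{C}_n^{i,i+1}=(n-i)\binom{n}{i}=(i+1)\binom{n}{i+1}$, so the ranks are $n-i$ and $i+1$ respectively (the paper deliberately says only ``free''). Rank two is the correct statement for the Soergel-bimodule situation $\mathtt{C}_W\supset\mathtt{C}_W^s$ of Example~\ref{ex2}, not here. If you ran your Mackey computation with rank two you would not recover the multiplicity $\mathbbm{1}_{\mathtt{i}}^{\oplus(n-2i)}$ in the $\mathfrak{sl}_2$-relation; with the correct ranks you do, and the rest of your outline goes through unchanged.
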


\begin{corollary}\label{cor18}
The $2$-category $\cB_n$ is fiat.
\end{corollary}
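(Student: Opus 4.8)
The plan is to verify directly the seven defining conditions of a fiat category for $\cB_n$, using Proposition~\ref{prop17} to control the indecomposable $1$-morphisms together with the standard homological properties of the algebras $\mathtt{C}_n^i$. First I would dispose of the easy structural conditions: $\cB_n$ has finitely many objects by construction ($\mathtt{0},\dots,\mathtt{n}$); each $\cB_n(\mathtt{i},\mathtt{j})$ is a fully additive $\Bbbk$-linear category because we defined $\cB_n$ as a full, fully additive $2$-subcategory of an endomorphism $2$-category of module categories over finite-dimensional algebras, and it has finitely many indecomposable objects up to isomorphism by Proposition~\ref{prop17}; horizontal compositions are additive and $\Bbbk$-linear since composition of projective functors (equivalently, tensoring with bimodules) is; the Hom-spaces between functors are finite dimensional because all $\mathtt{C}_n^i$ are finite dimensional; and $\mathbbm{1}_{\mathtt{i}}$, realized as the identity functor on $\mathcal{C}_i$, is indecomposable since $\mathtt{C}_n^i$ is connected (as $S_n^i$ is a parabolic and the invariant ring is a connected graded algebra).

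The substantive points are the weak involution $*$ and the existence of adjunction morphisms. For the involution, I would set $*$ to be the identity on objects and send a $1$-morphism to its (simultaneously left and right) adjoint: the generating functors $\mathrm{F}_i^{(1)}$ and $\mathrm{E}_{i+1}^{(1)}$ are biadjoint to each other, as already noted in the text (this rests on $\mathtt{C}_n$, $\mathtt{C}_n^i$, $\mathtt{C}_n^{i,j}$ being symmetric and $\mathtt{C}_n^{i,i+1}$ being free over $\mathtt{C}_n^i$ and $\mathtt{C}_n^{i+1}$), so taking right adjoints is an involutive anti-autoequivalence of $\cB_n$ which is object-preserving; on indecomposables it permutes the lists in Proposition~\ref{prop17} by reversing composites and swapping $\mathrm{E}\leftrightarrow\mathrm{F}$, so it is well defined on $\cB_n$. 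The adjunction morphisms are then built from the unit/counit of the biadjunction between generating functors and propagated to composites: if $\alpha,\beta$ are adjunction morphisms for $\mathrm{F}$ and $\alpha',\beta'$ for $\mathrm{G}$, then the standard zig-zag composites give adjunction morphisms for $\mathrm{F}\circ\mathrm{G}$ with $(\mathrm{F}\circ\mathrm{G})^*=\mathrm{G}^*\circ\mathrm{F}^*$; since every $1$-morphism of $\cB_n$ is a direct sum of direct summands of composites of generating functors, and adjunctions pass to direct summands and direct sums, this covers all of $\cB_n$.

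The main obstacle is the compatibility of the involution $*$ with the direct-sum decompositions forced by passing to the full fully additive $2$-subcategory: one must check that $*$, defined on generating composites via adjoints, actually descends to a well-defined functor on $\cB_n$ rather than merely on a skeleton of composites — i.e. that the indecomposable summands are sent to indecomposable summands compatibly, and that $**\cong\mathrm{Id}$ as a $2$-isomorphism, not just up to equivalence. Here I would lean on Proposition~\ref{prop17}: taking adjoints sends each indecomposable $1$-morphism listed there to another one on the list (e.g. $\mathrm{F}_{i+k}^{(j-i+k)}\mathrm{E}_i^{(k)}$ has adjoint $\mathrm{E}_{i+k}^{(k)}\mathrm{F}_{i+k}^{(j-i+k)}$-type composites, which reindex to an entry of the appropriate list), so $*$ permutes the finite set of indecomposables and is automatically well defined and involutive on $\cB_n$. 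Once this is in place, the seven bullet points are all verified and fiatness follows; the remaining conditions are, as indicated above, either immediate from the construction or standard consequences of the symmetry and freeness properties of the coinvariant algebras.
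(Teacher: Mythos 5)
Your proposal is correct and follows essentially the same route as the paper: the paper's proof likewise cites Proposition~\ref{prop17} for finiteness of indecomposable $1$-morphisms, takes the weak involution to be the (bi)adjoint functor, and notes that this also yields the adjunction morphisms, leaving the remaining axioms as clear. Your extra care about $*$ descending to summands and being involutive is a reasonable elaboration of what the paper leaves implicit (and is only needed up to the \emph{weak} involution required by the fiat axioms), but it does not change the argument.
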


\begin{proof}
From Proposition~\ref{prop17} we see that $\cB_n$ has only finitely many indecomposable $1$-morphisms up to isomorphism.
The weak involution is given by taking the adjoint functor, which also implies existence of adjunction morphisms.
The rest is clear.
\end{proof}

Set $\mathrm{E}_0^{(1)}=\mathrm{E}_{n+1}^{(1)}=\mathrm{F}_{-1}^{(1)}=\mathrm{F}_{n}^{(1)}=0$. Then \cite[Theorem~1]{BFK}
asserts that for every $i\in\{0,1,\dots,n\}$ there is an isomorphism of functors as follows:
\begin{equation}\label{eq55}
(\mathrm{E}_{i+1}^{(1)}\circ \mathrm{F}_{i}^{(1)})\oplus\mathbbm{1}_{\mathtt{i}}^{\oplus(i)}\cong 
(\mathrm{F}_{i-1}^{(1)}\circ \mathrm{E}_{i+1}^{(1)})\oplus \mathbbm{1}_{\mathtt{i}}^{\oplus(n-i)}.
\end{equation}
Furthermore, for every $i\in\{0,1,\dots,n\}$, $k=1,2,\dots,i$ and $l=1,2,\dots,n-i$ there are isomorphisms of 
functors as follows:
\begin{equation}\label{eq65}
\mathrm{E}_{i+(k-1)}^{(1)}\circ\cdots\circ\mathrm{E}_{i+1}^{(1)}\circ\mathrm{E}_{i}^{(1)}\cong
\big(\mathrm{E}_{i}^{(k)}\big)^{\oplus k!},\quad
\mathrm{F}_{i-(k-1)}^{(1)}\circ\cdots\circ\mathrm{F}_{i-1}^{(1)}\circ\mathrm{F}_{i}^{(1)}\cong
\big(\mathrm{F}_{i}^{(k)}\big)^{\oplus k!}.
\end{equation}
This can be used to describe cells in $\cB_n$ as follows: 
\begin{itemize}
\item for $i\in\{0,1,\dots, \lfloor\frac{n}{2}\rfloor\}$ let $\mathcal{J}_i$ denote the two-sided cell of $\cB_n$ 
containing $\mathbbm{1}_{\mathtt{i}}$;
\item for $(i,k)$ such that $i\in\{0,1,\dots \lfloor\frac{n}{2}\rfloor\}$ and $k\in\{0,1,\dots,i\}$ let
$\mathcal{L}_{(i,k)}$ denote the left cell of $\cB_n$  containing $\mathrm{F}_{i-k}^{(k)}\mathrm{E}_{i}^{(k)}$; 
\item for $(i,k)$ such that $i\in\{\lfloor\frac{n}{2}\rfloor+1,\dots,n\}$ and $k\in\{0,1,\dots,n-i\}$ let
$\mathcal{L}_{(i,k)}$ denote the left cell of $\cB_n$  containing $\mathrm{E}_{i+k}^{(k)}\mathrm{F}_{i}^{(k)}$; 
\item for $(i,k)$ such that $i\in\{0,1,\dots \lfloor\frac{n}{2}\rfloor\}$ and $k\in\{0,1,\dots,i\}$ let
$\mathcal{R}_{(i,k)}$ denote the right cell of $\cB_n$  containing $\mathrm{F}_{i-k}^{(k)}\mathrm{E}_{i}^{(k)}$; 
\item for $(i,k)$ such that $i\in\{\lfloor\frac{n}{2}\rfloor+1,\dots,n\}$ and $k\in\{0,1,\dots,n-i\}$ let
$\mathcal{R}_{(i,k)}$ denote the right cell of $\cB_n$  containing $\mathrm{E}_{i+k}^{(k)}\mathrm{F}_{i}^{(k)}$.
\end{itemize}

\begin{proposition}\label{prop19}
\begin{enumerate}[(i)]
\item\label{prop19.1} The two-sided cells $\mathcal{J}_i$ defined above give a complete and irredundant list of
two-sided cells.
\item\label{prop19.2} The left cells $\mathcal{L}_{(i,k)}$ defined above give a complete and irredundant list of
left cells. 
\item\label{prop19.3} The right cells $\mathcal{R}_{(i,k)}$ defined above give a complete and irredundant list of
right cells. 
\item\label{prop19.4} All two-sided cells of  $\cB_n$ are strongly regular and satisfy condition \eqref{eq2}.
\end{enumerate}
\end{proposition}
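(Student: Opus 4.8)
The plan is to establish the four parts by combining the explicit list of indecomposable $1$-morphisms from Proposition~\ref{prop17} with the fundamental isomorphisms \eqref{eq55} and \eqref{eq65}. First I would set up notation: for a pair $(i,k)$ with $0\le i\le\lfloor n/2\rfloor$ and $0\le k\le i$ the distinguished element of $\mathcal{L}_{(i,k)}$ is $\mathrm{F}_{i-k}^{(k)}\mathrm{E}_i^{(k)}\in\cB_n(\mathtt{i},\mathtt{i})$, and symmetrically for $i>\lfloor n/2\rfloor$ using $\mathrm{E}_{i+k}^{(k)}\mathrm{F}_i^{(k)}$; I would also record that by the self-duality coming from the weak involution $*$ (which swaps $\mathrm{F}$'s and $\mathrm{E}$'s), the right cell $\mathcal{R}_{(i,k)}$ is the image of $\mathcal{L}_{(i,k)}$ under $*$.

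For part \eqref{prop19.1}, the key computation is that composing a $1$-morphism of the form $\mathrm{F}^{(a)}\mathrm{E}^{(b)}$ (or $\mathrm{E}\mathrm{F}$) on the left and right by suitable $\mathrm{E}^{(1)}$'s and $\mathrm{F}^{(1)}$'s and invoking \eqref{eq55} and \eqref{eq65} allows one to pass, up to direct summands, between any two indecomposables listed in Proposition~\ref{prop17} that lie in the same ``layer'' $\mathcal{J}_i$ — i.e. between $\mathbbm{1}_{\mathtt{i}}$, $\mathbbm{1}_{\mathtt{i}'}$ and all the $\mathrm{F}^{(k)}\mathrm{E}^{(k)}$-type morphisms with the same $k$. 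Conversely I would check that the identity $1$-morphism $\mathbbm{1}_{\mathtt{i}}$ cannot be obtained as a summand of $\mathrm{H}\circ(\mathrm{F}^{(k)}\mathrm{E}^{(k)})\circ\mathrm{H}'$ for $k>0$ — this is where \eqref{eq55} is used in the ``other direction'': iterating it strictly decreases the relevant invariant (the value of $k$ labelling the layer), so no collapsing across layers occurs. That $k$ is a complete invariant of two-sided cells then follows, giving \eqref{prop19.1}.

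For \eqref{prop19.2} and \eqref{prop19.3}, I would argue that within a fixed two-sided cell $\mathcal{J}_i$ (equivalently, fixed $k$), left multiplication can only change the source object of a $1$-morphism in limited ways: from the shape of the list in Proposition~\ref{prop17}, the left cell is detected by the source object of the standard representative, so that $\mathcal{L}_{(i,k)}$ for varying $i$ are pairwise incomparable and exhaust the left cells in $\mathcal{J}_k$; the right-cell statement is the $*$-image of this. Part \eqref{prop19.4} then follows: strong regularity amounts to (a) incomparability of distinct left cells inside a two-sided cell — already obtained — and (b) each intersection $\mathcal{L}_{(i,k)}\cap\mathcal{R}_{(i',k)}$ being a single element, which one reads off by matching source and target objects among the listed indecomposables; and condition \eqref{eq2} (constancy of $m_{\mathrm{F}}$ on right cells) reduces, via adjunction, to computing the multiplicity of the Duflo involution in $\mathrm{F}^*\circ\mathrm{F}$ for $\mathrm{F}=\mathrm{F}_{i-k}^{(k)}\mathrm{E}_i^{(k)}$, which is governed entirely by $k$ (through the $k!$ and binomial factors appearing in \eqref{eq65} and the freeness of $\mathtt{C}_n^{i,i+1}$) and hence is independent of $i$.

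The main obstacle I anticipate is the bookkeeping in part \eqref{prop19.1}: one must organize the repeated applications of \eqref{eq55} and \eqref{eq65} carefully enough to see both that all indecomposables with the same $k$ genuinely lie in one two-sided cell \emph{and} that the cells for different $k$ are separated, without drowning in the case distinctions (a)–(f) of Proposition~\ref{prop17}. A clean way to handle this is to introduce a numerical invariant — essentially $k$, or equivalently ``how far a $1$-morphism is from being an identity'' — show it is monotone under $\leq_J$ using \eqref{eq55}, and show it is preserved within a layer using \eqref{eq65}; everything else is then routine matching of source/target objects against Proposition~\ref{prop17}.
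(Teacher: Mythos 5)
Your proposal follows essentially the same route as the paper: parts (i)--(iii) and strong regularity are obtained by direct calculation from Proposition~\ref{prop17} together with the isomorphisms \eqref{eq55} and \eqref{eq65}, and condition \eqref{eq2} is verified by computing $m_{\mathrm{F}}$ across a fixed right cell (the paper does this by listing the elements of $\mathcal{R}_{(i,k)}$ explicitly and identifying $m_{\mathrm{F}}$ with the multiplicity of $\mathbbm{1}_{\mathtt{k}}$ in $\mathrm{E}_{i}^{(i-k)}\mathrm{F}_{k}^{(i-k)}$). The only point to tidy up is bookkeeping: the numerical invariant separating two-sided cells is the extremal object $\mathtt{i-k}$ (resp.\ $\mathtt{i+k}$) through which a $1$-morphism factors, not $k$ itself, and constancy of $m_{\mathrm{F}}$ on $\mathcal{R}_{(i,k)}$ means independence of the varying source object within that fixed right cell rather than independence of the index $i$.
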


\begin{proof}
Claims \eqref{prop19.1}--\eqref{prop19.3} follow from Proposition~\ref{prop17} and isomorphisms \eqref{eq55} 
and \eqref{eq65} by direct calculation. Similarly one checks that all two-sided cells of  $\cB_n$ are strongly regular,
so we are only left to check condition \eqref{eq2}. 

Consider $\mathcal{R}_{(i,k)}$ for some fixed pair $(i,k)$. Assume $i\in\{0,1,\dots \lfloor\frac{n}{2}\rfloor\}$ 
and $k\in\{0,1,\dots,i\}$ (the other case is dealt with by a similar argument). Then, using Proposition~\ref{prop17} 
and isomorphisms \eqref{eq55} and \eqref{eq65} one shows that $\mathcal{R}_{(i,k)}$ consists exactly of the
following morphisms (here $s:=\lfloor\frac{n}{2}\rfloor$): 
\begin{gather*}
\mathrm{F}_{k}^{(i-k)},\mathrm{F}_{k}^{(i-k)}\mathrm{E}_{k+1}^{(1)},\dots,
\mathrm{F}_{k}^{(i-k)}\mathrm{E}_{s}^{(s-k)},\\
\mathrm{E}_{n-k}^{(n-k-i)},\mathrm{E}_{n-k}^{(n-k-i)}\mathrm{F}_{n-k-1}^{(1)},\dots,
\mathrm{E}_{n-k}^{(n-k-i)}\mathrm{F}_{s+1}^{(n-k-(s+1))}.
\end{gather*}
Using \eqref{eq55} one checks that the value of $m_{\mathrm{F}}$ on all these $1$-morphisms equals the multiplicity
of $\mathbbm{1}_{\mathtt{k}}$ in $\mathrm{E}_{i}^{(i-k)}\mathrm{F}_{k}^{(i-k)}$.
\end{proof}

\begin{corollary}\label{cor20}
The defining representation of $\cB_n$ is equivalent to $\overline{\mathbf{C}}_{\mathcal{L}_{(0,0)}}$. 
\end{corollary}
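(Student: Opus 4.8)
The plan is to prove the corollary by establishing two facts: that $\overline{\mathbf{C}}_{\mathcal{L}_{(0,0)}}$ is equivalent to the principal abelian $2$-representation $\overline{\mathbb{P}}_{\mathtt{0}}$, and that the defining $2$-representation $\mathbf{D}$ of $\cB_n$ is equivalent to $\overline{\mathbb{P}}_{\mathtt{0}}$; the corollary then follows at once. Note that $\mathbf{D}$ is abelian, since $\mathbf{D}(\mathtt{j})\simeq\mathtt{C}_n^j\text{-}\mathrm{mod}$ is a module category of a finite dimensional algebra.

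The first fact rests on the ``triviality'' of $\cB_n(\mathtt{0},\mathtt{0})$. By Proposition~\ref{prop17}, each list of indecomposable $1$-morphisms occurring there collapses to its first entry as soon as the source object is $\mathtt{0}$ (every other candidate involves a factor $\mathrm{E}_0^{(k)}$ with $k\geq 1$, which cannot be formed); hence $\cB_n(\mathtt{0},\mathtt{0})$ has $\mathbbm{1}_{\mathtt{0}}$ as its only indecomposable $1$-morphism, with $\mathrm{End}_{\cB_n(\mathtt{0},\mathtt{0})}(\mathbbm{1}_{\mathtt{0}})\cong\mathtt{C}_n^0=\mathbb{C}$. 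In particular $\mathtt{i}_{\mathcal{L}_{(0,0)}}=\mathtt{0}$, and in $\overline{\mathbb{P}}_{\mathtt{0}}(\mathtt{0})$ the indecomposable projective $P_{\mathbbm{1}_{\mathtt{0}}}=(0\to\mathbbm{1}_{\mathtt{0}})$ is simple, so $P_{\mathbbm{1}_{\mathtt{0}}}=L_{\mathbbm{1}_{\mathtt{0}}}$. Since $\mathbbm{1}_{\mathtt{0}}=\mathrm{F}_0^{(0)}\circ\mathrm{E}_0^{(0)}\in\mathcal{L}_{(0,0)}$, the defining property of the Duflo involution from Subsection~\ref{s2.4} (see \cite[Proposition~17]{MM}) is satisfied by $\mathbbm{1}_{\mathtt{0}}$ with $N=P_{\mathbbm{1}_{\mathtt{0}}}$, so by uniqueness $\mathrm{G}_{\mathcal{L}_{(0,0)}}=\mathbbm{1}_{\mathtt{0}}$ and $Q=\mathrm{G}_{\mathcal{L}_{(0,0)}}\,L_{\mathrm{G}_{\mathcal{L}_{(0,0)}}}=P_{\mathbbm{1}_{\mathtt{0}}}$. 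As $\mathrm{F}\,P_{\mathbbm{1}_{\mathtt{0}}}=(0\to\mathrm{F})$ for every $1$-morphism $\mathrm{F}$ with source $\mathtt{0}$, and the objects $(0\to\mathrm{F})$ additively generate all projectives of $\overline{\mathbb{P}}_{\mathtt{0}}$, the additive $2$-representation $\mathbf{C}_{\mathcal{L}_{(0,0)}}=(\overline{\mathbb{P}}_{\mathtt{0}})_{P_{\mathbbm{1}_{\mathtt{0}}}}$ coincides with the additive $2$-subrepresentation of projective objects of $\overline{\mathbb{P}}_{\mathtt{0}}$. The $\cB_n$-equivariant functor $\mathrm{F}\mapsto(0\to\mathrm{F})$ being full, faithful and dense, we get $\mathbf{C}_{\mathcal{L}_{(0,0)}}\simeq\mathbb{P}_{\mathtt{0}}$, and the abelianization $2$-functor then yields $\overline{\mathbf{C}}_{\mathcal{L}_{(0,0)}}\simeq\overline{\mathbb{P}}_{\mathtt{0}}$.

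For the second fact, Theorem~\ref{thm9} reduces matters to showing $\mathbb{P}_{\mathtt{0}}$ and $\mathbf{D}_{pr}$ are elementary equivalent, where $\mathbf{D}_{pr}(\mathtt{j})$ is the category of projective $\mathtt{C}_n^j$-modules. The Yoneda Lemma~\ref{lem4} applied to $\mathbb{C}=\mathtt{C}_n^0\in\mathbf{D}(\mathtt{0})$ gives a homomorphism $\Psi\colon\mathbb{P}_{\mathtt{0}}\to\mathbf{D}$ with $\Psi(\mathbbm{1}_{\mathtt{0}})=\mathbb{C}$, sending a $1$-morphism $\mathrm{F}$ to $\mathrm{F}\,\mathbb{C}$; since $\mathbb{C}$ is projective and, $\cB_n$ being fiat, every $1$-morphism acts by a functor preserving projectives, $\Psi$ factors through $\mathbf{D}_{pr}$, so it remains to see that $\Psi$ is an equivalence at each $\mathtt{j}$. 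Because $\mathtt{C}_n^{0,j}=\mathtt{C}_n^j$ (there being no simple reflection ``$s_0$''), for $j>0$ the functor $\mathrm{F}_0^{(j)}$ equals $\mathrm{Ind}_{\mathtt{C}_n^0}^{\mathtt{C}_n^j}=\mathtt{C}_n^j\otimes_{\mathbb{C}}-$, while $\mathrm{F}_0^{(0)}=\mathbbm{1}_{\mathtt{0}}$; in both cases $\mathrm{F}_0^{(j)}\,\mathbb{C}\cong\mathtt{C}_n^j$, the regular module, whose additive closure is $\mathbf{D}_{pr}(\mathtt{j})$, so $\Psi_{\mathtt{j}}$ is dense. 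By Proposition~\ref{prop17}, $\cB_n(\mathtt{0},\mathtt{j})$ has the single indecomposable $1$-morphism $\mathrm{F}_0^{(j)}$, so full faithfulness of $\Psi_{\mathtt{j}}$ amounts to it inducing a bijection $\mathrm{End}_{\cB_n(\mathtt{0},\mathtt{j})}(\mathrm{F}_0^{(j)})\to\mathrm{End}_{\mathtt{C}_n^j\text{-}\mathrm{mod}}(\mathtt{C}_n^j)$; but the source is the space of natural transformations of $\mathtt{C}_n^j\otimes_{\mathbb{C}}-$, and $\Psi_{\mathtt{j}}$ is evaluation at $\mathbb{C}$, which is injective since $\mathbb{C}$ is a projective generator of $\mathcal{C}_0$ and surjective since every left $\mathtt{C}_n^j$-module endomorphism of $\mathtt{C}_n^j$ is automatically a $\mathtt{C}_n^j$-$\mathbb{C}$-bimodule map. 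Hence $\mathbb{P}_{\mathtt{0}}\simeq\mathbf{D}_{pr}$, and combining with $\mathbf{D}\simeq\overline{\mathbf{D}_{pr}}$ (Theorem~\ref{thm9}) and the first fact gives $\mathbf{D}\simeq\overline{\mathbf{D}_{pr}}\simeq\overline{\mathbb{P}}_{\mathtt{0}}\simeq\overline{\mathbf{C}}_{\mathcal{L}_{(0,0)}}$.

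The main obstacle is the first fact: one has to unwind the construction of cell $2$-representations from Subsection~\ref{s2.4} and \cite[Proposition~17]{MM} carefully enough to see that the triviality of $\cB_n(\mathtt{0},\mathtt{0})$ forces the Duflo involution of $\mathcal{L}_{(0,0)}$ to be $\mathbbm{1}_{\mathtt{0}}$ and makes the cell $2$-representation degenerate to $\mathbb{P}_{\mathtt{0}}$. The remaining points — the collapse of the lists in Proposition~\ref{prop17} at the endpoint object $\mathtt{0}$, the equality $\mathtt{C}_n^{0,j}=\mathtt{C}_n^j$, and the computation of natural transformations of $\mathrm{F}_0^{(j)}$ — are routine.
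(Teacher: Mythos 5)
Your proof is correct and follows essentially the same route as the paper: the paper's one-line argument sends $\mathbbm{1}_{\mathtt{0}}$ to the projective simple module in $\mathcal{C}_0$ to get an elementary equivalence $\mathbf{C}_{\mathcal{L}_{(0,0)}}\to\mathbf{M}_{pr}$ and then invokes Theorem~\ref{thm9}, which is exactly the composite of your two equivalences $\mathbf{C}_{\mathcal{L}_{(0,0)}}\simeq\mathbb{P}_{\mathtt{0}}\simeq\mathbf{D}_{pr}$. You have simply spelled out the details the paper leaves as ``easily seen'' (the triviality of $\cB_n(\mathtt{0},\mathtt{j})$, the identification $\mathrm{G}_{\mathcal{L}_{(0,0)}}=\mathbbm{1}_{\mathtt{0}}$, and the density/full-faithfulness checks), all of which are accurate.
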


\begin{proof}
Denote the  defining representation of $\cB_n$ by $\mathbf{M}$. Sending $\mathbbm{1}_{\mathtt{0}}$ to the projective 
simple module in $\mathcal{C}_0$ restricts to a homomorphism from ${\mathbf{C}}_{\mathcal{L}_{(0,0)}}$ to 
$\mathbf{M}_{pr}$ which is easily seen to be an elementary equivalence. The claim now follows from Theorem~\ref{thm9}.
\end{proof}

It is easy to check that the decategorification of $\overline{\mathbf{C}}_{\mathcal{L}_{(0,0)}}$ gives
a simple $n+1$-dimensional $\mathfrak{sl}_2$-module (the decategorification of $\cB_n$ gives the image of 
Lusztig's idempotent completion $\dot{U}_{\mathfrak{sl}_2}$ of $U(\mathfrak{sl}_2)$ acting on this 
simple $n+1$-dimensional $\mathfrak{sl}_2$-module). All this is spelled out e.g. in \cite{BFK,FKS}.

\begin{corollary}\label{cor21}
The category $\cB_n$ is $\mathcal{J}_0$-simple. 
\end{corollary}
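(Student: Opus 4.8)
The plan is to deduce Corollary~\ref{cor21} from the combination of Theorem~\ref{thm15}, Corollary~\ref{cor20} and Proposition~\ref{prop19}. First I would observe that $\mathcal{J}_0$ is the \emph{unique maximal} two-sided cell of $\cB_n$ with respect to $\leq_J$: indeed, $\mathcal{J}_0$ contains all the identity $1$-morphisms $\mathbbm{1}_{\mathtt{i}}$, and for any indecomposable $1$-morphism $\mathrm{F}\in\cB_n(\mathtt{i},\mathtt{j})$ one has $\mathrm{F}\circ\mathbbm{1}_{\mathtt{i}}\cong\mathrm{F}$, so $\mathbbm{1}_{\mathtt{i}}\geq_J\mathrm{F}$; hence every two-sided cell is $\leq_J\mathcal{J}_0$. (Alternatively this is immediate from the explicit description of the two-sided cells worked out in Proposition~\ref{prop19} together with the adjunction-based analysis used in its proof.)

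Next I would invoke Theorem~\ref{thm15} applied to the left cell $\mathcal{L}_{(0,0)}$, which lies in $\mathcal{J}_0$: it tells us that the image $2$-category $\cB_n/\mathrm{Ker}(\mathbf{C}_{\mathcal{L}_{(0,0)}})$ is $\mathcal{J}_0$-simple. So it suffices to show that $\mathrm{Ker}(\mathbf{C}_{\mathcal{L}_{(0,0)}})$ is the trivial (zero) $2$-ideal, i.e.\ that the cell $2$-representation $\mathbf{C}_{\mathcal{L}_{(0,0)}}$ (equivalently, by Corollary~\ref{cor20} and Theorem~\ref{thm9}, the defining representation $\mathbf{M}$ of $\cB_n$) is faithful on $2$-morphisms.

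The main work therefore is the faithfulness of the defining representation. Here I would argue directly: $\mathbf{M}$ sends each object $\mathtt{i}$ to (a small category equivalent to) $\mathtt{C}_n^i\text{-}\mathrm{mod}$, and by construction of $\cB_n$ the $1$-morphisms act by genuine functors on these module categories and the $2$-morphisms act by the corresponding natural transformations; by the very definition of $\cB_n$ as the minimal full fully additive $2$-subcategory of the endomorphism $2$-category of the $\mathcal{C}_i$'s generated by the functors in \eqref{eq3}, the $2$-morphisms of $\cB_n$ \emph{are} all natural transformations between such functors, so the action map on each $\mathrm{Hom}_{\cB_n(\mathtt{i},\mathtt{j})}(\mathrm{F},\mathrm{G})$ is injective — indeed it is (tautologically) the identity on these spaces of natural transformations. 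Hence $\mathrm{Ker}(\mathbf{M})=0$, and combining with Corollary~\ref{cor20}, Theorem~\ref{thm9} and the equivalence of $2$-representations preserving kernels, we get $\mathrm{Ker}(\mathbf{C}_{\mathcal{L}_{(0,0)}})=0$.

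I expect the only genuinely delicate point to be the bookkeeping needed to pass faithfulness of the defining representation $\mathbf{M}$ through the chain of equivalences $\mathbf{M}\simeq\mathbf{M}_{pr}\simeq\mathbf{C}_{\mathcal{L}_{(0,0)}}$ (via $\overline{\mathbf{M}_{pr}}$ in Theorem~\ref{thm9} and the elementary equivalence in Corollary~\ref{cor20}), since one has to check that each elementary equivalence $\Phi_{\mathtt{i}}$ being an equivalence of categories forces the induced map on $\cC$-$2$-morphism spaces to have the same kernel on both sides; this is a formal but slightly fiddly diagram chase with $2$-natural transformations and their naturality squares. Once that is in place, the conclusion $\mathrm{Ker}(\mathbf{C}_{\mathcal{L}_{(0,0)}})=0$ is immediate, and then Theorem~\ref{thm15} yields that $\cB_n=\cB_n/\mathrm{Ker}(\mathbf{C}_{\mathcal{L}_{(0,0)}})$ is $\mathcal{J}_0$-simple, as claimed.
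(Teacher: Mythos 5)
Your proposal is correct and follows essentially the same route as the paper, whose proof reads in full: ``This follows from Corollary~\ref{cor20} and Theorem~\ref{thm15} as the defining representation is obviously faithful.'' You have simply expanded the two points the authors leave implicit (the tautological faithfulness of the defining representation, and the fact that equivalent $2$-representations have equal kernels), both of which you handle correctly.
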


\begin{proof}
This follows from Corollary~\ref{cor20} and Theorem~\ref{thm15} as the defining representation is obviously faithful. 
\end{proof}

The latter implies the following recursion for $\cB_n$ (for appropriate choices of the $\mathcal{C}_i$'s in 
the corresponding defining representations):

\begin{theorem}\label{thm22}
The $2$-categories $\cB_{n-2}$ and $\cB_n/\mathrm{Ker}(\mathbf{C}_{\mathcal{L}_{1}})$ are isomorphic.
\end{theorem}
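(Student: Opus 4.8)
The plan is to present both $\cB_{n-2}$ and $\cB_n/\mathrm{Ker}(\mathbf{C}_{\mathcal{L}_1})$ as the images of faithful $2$-representations and then to match those images; the freedom in the choice of the small categories realizing $\cB_{n-2}$ (referred to in the statement) will be used precisely to turn the match into an equality on the nose. Throughout, $\mathcal{L}_1$ denotes the left cell $\mathcal{L}_{(1,0)}$ of $\mathcal{J}_1$, which contains $\mathbbm{1}_{\mathtt{1}}$ (which left cell of $\mathcal{J}_1$ one picks is immaterial, by Proposition~\ref{prop10} and Proposition~\ref{prop19}\eqref{prop19.4}); thus $\mathtt{i}_{\mathcal{L}_1}=\mathtt{1}$ and the Duflo involution in $\mathcal{L}_1$ is $\mathbbm{1}_{\mathtt{1}}$, so $\mathbf{C}_{\mathcal{L}_1}(\mathtt{j})=\mathrm{add}(\mathrm{F}\,L_{\mathbbm{1}_{\mathtt{1}}})$ with $\mathrm{F}$ running over the $1$-morphisms in $\cB_n(\mathtt{1},\mathtt{j})$. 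Put $\cI:=\mathrm{Ker}(\mathbf{C}_{\mathcal{L}_1})$. Since $\mathbf{C}_{\mathcal{L}_1}$ factors through $\cB_n/\cI$ and the induced $2$-representation is faithful by the definition of $\cI$, the $2$-category $\cB_n/\cI$ is (isomorphic to) the image of $\cB_n$ acting on $\mathbf{C}_{\mathcal{L}_1}$; and the defining representation of $\cB_{n-2}$ is faithful (as in the proof of Corollary~\ref{cor21}, applied to $n-2$), so $\cB_{n-2}$ equals the image of its own defining representation. It therefore suffices to identify these two images.

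First I reduce the ambient data. By Proposition~\ref{prop19}\eqref{prop19.1} the two-sided cells of $\cB_n$ are linearly ordered, $\mathcal{J}_0>_J\mathcal{J}_1>_J\cdots$, so the description of $\cI$ given in the proof of Theorem~\ref{thm12} shows that $\cI$ contains $\mathrm{id}_{\mathrm{F}}$ for every $\mathrm{F}\in\mathcal{J}_0$; in particular $\mathrm{id}_{\mathbbm{1}_{\mathtt{0}}},\mathrm{id}_{\mathbbm{1}_{\mathtt{n}}}\in\cI$, so all $1$- and $2$-morphisms of $\cB_n/\cI$ with source or target $\mathtt{0}$ or $\mathtt{n}$ vanish, and $\cB_n/\cI$ may be regarded as a $2$-category on the objects $\mathtt{1},\dots,\mathtt{n-1}$. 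Among the generating $1$-morphisms $\mathrm{F}_i^{(1)},\mathrm{E}_{i+1}^{(1)}$ of $\cB_n$, exactly those with $1\leq i\leq n-2$ therefore act nonzero on $\mathbf{C}_{\mathcal{L}_1}$, and the shift $\mathtt{j}\mapsto\mathtt{j-1}$ matches them bijectively with the generating $1$-morphisms $\mathrm{F}_{i-1}^{(1)},\mathrm{E}_{i}^{(1)}$ of $\cB_{n-2}$.

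The heart of the proof, and the step I expect to be the main obstacle, is to identify $\overline{\mathbf{C}}_{\mathcal{L}_1}$ (equivalently its finitary version $\mathbf{C}_{\mathcal{L}_1}$) with the defining representation of $\cB_{n-2}$ along this shift. For $j\in\{1,\dots,n-1\}$ the left cell $\mathcal{L}_1$ meets $\cB_n(\mathtt{1},\mathtt{j})$ in a single indecomposable $1$-morphism $\mathrm{F}$, so $\mathbf{C}_{\mathcal{L}_1}(\mathtt{j})$ is $\mathrm{add}$ of the indecomposable object $\mathrm{F}\,L_{\mathbbm{1}_{\mathtt{1}}}$; by \cite[Lemma~19]{MM} together with the Clebsch--Gordan-type isomorphisms \eqref{eq55} and \eqref{eq65} its endomorphism algebra has dimension $\binom{n-2}{j-1}=\dim\mathtt{C}_{n-2}^{j-1}$, and using the category $\mathcal{O}$ description of Remark~\ref{rem17} one identifies $\mathbf{C}_{\mathcal{L}_1}(\mathtt{j})$ with the category of modules with projective-injective presentation in the singular/parabolic block categorifying the $(n-1)$-dimensional simple $\mathfrak{sl}_2$-module, and the action of the image of $\mathrm{F}_i^{(1)}$, resp. $\mathrm{E}_i^{(1)}$, on $\mathbf{C}_{\mathcal{L}_1}$ with the generating functor $\mathrm{F}_{i-1}^{(1)}$, resp. $\mathrm{E}_{i-1}^{(1)}$, of $\cB_{n-2}$; the equivalences of \cite{FKS} match this realization with the regular-block realization used to define $\cB_{n-2}$. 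Pinning down the full $2$-action this way, rather than just the underlying categories and their decategorifications, is the delicate part.

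Granting this identification and choosing the small categories of the defining representation of $\cB_{n-2}$ to be the categories $\mathbf{C}_{\mathcal{L}_1}(\mathtt{j})$ themselves, the images of $\mathrm{F}_i^{(1)},\mathrm{E}_i^{(1)}$ ($1\leq i\leq n-1$) on $\mathbf{C}_{\mathcal{L}_1}$ become exactly the generating $1$-morphisms of $\cB_{n-2}$, so the image of $\cB_n$ on $\mathbf{C}_{\mathcal{L}_1}$ contains $\cB_{n-2}$, and it is contained in $\cB_{n-2}$ since $\cB_{n-2}$ carries all natural transformations as its $2$-morphisms. It remains to see that no $2$-morphisms are missed: as the induced $2$-representation of $\cB_n/\cI$ is faithful, $\dim\mathrm{Hom}_{\cB_n/\cI}(\mathrm{F},\mathrm{G})=\dim\mathrm{Hom}_{\mathbf{C}_{\mathcal{L}_1}}(\mathrm{F}\,L_{\mathbbm{1}_{\mathtt{1}}},\mathrm{G}\,L_{\mathbbm{1}_{\mathtt{1}}})$ for $1$-morphisms $\mathrm{F},\mathrm{G}$, and by \cite[Lemma~19]{MM}, \eqref{eq55} and \eqref{eq65} this equals $\dim\mathrm{Hom}_{\cB_{n-2}}$ of the corresponding $1$-morphisms (as in the proof of Theorem~\ref{thm15}, the relevant decomposition multiplicities for $\cB_n$ and for the quotient coincide). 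Since the image $\mathrm{Hom}$-spaces are contained in those of $\cB_{n-2}$ and have the same dimension, they coincide with them. Hence $\cB_n/\mathrm{Ker}(\mathbf{C}_{\mathcal{L}_1})\cong\cB_{n-2}$.
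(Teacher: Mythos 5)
Your proof hinges on what you yourself flag as ``the heart of the proof'': identifying the $2$-action of $\cB_n$ on $\mathbf{C}_{\mathcal{L}_1}$ with the defining representation of $\cB_{n-2}$ along the shift $\mathtt{j}\mapsto\mathtt{j-1}$. This step is asserted rather than proved, and it is a genuine gap, because it is essentially equivalent to the theorem itself (by Corollary~\ref{cor20} applied to $n-2$, the defining representation of $\cB_{n-2}$ \emph{is} a cell $2$-representation, so you are being asked to match two cell $2$-representations of two different $2$-categories as actual $2$-functors). The evidence you offer does not close it: the computation $\dim\mathrm{End}(\mathrm{F}\,L_{\mathbbm{1}_{\mathtt{1}}})=\binom{n-2}{j-1}$ only matches a dimension, not the algebra $\mathtt{C}_{n-2}^{j-1}$ itself, and even granting an identification of the underlying categories, you must still show that the functors induced by $\mathrm{F}_i^{(1)},\mathrm{E}_i^{(1)}$ on $\mathbf{C}_{\mathcal{L}_1}(\mathtt{j})$ are (compatibly, and for an \emph{isomorphism} of $2$-categories, on the nose) the induction--restriction functors over $\mathtt{C}_{n-2}^{i}\subset\mathtt{C}_{n-2}^{i,i+1}\supset\mathtt{C}_{n-2}^{i+1}$ that generate $\cB_{n-2}$. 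The appeal to Remark~\ref{rem17} and to the equivalences of \cite{FKS} produces at best equivalences of categories, hence a biequivalence, and your device of redefining the $\mathcal{C}_i$'s of $\cB_{n-2}$ to be the categories $\mathbf{C}_{\mathcal{L}_1}(\mathtt{j})$ does not by itself make the generating $1$-morphisms literally coincide.

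The paper sidesteps all of this. It constructs an explicit surjective $2$-functor $\Phi:\cB_n\tto\cB_{n-2}$ directly from the algebra surjections $\mathtt{C}_n\tto\mathtt{C}_{n-2}$ (kill $x_1,x_n$, shift the remaining variables), which requires no analysis of the cell $2$-representation at all. Then it invokes the uniqueness statement of Theorem~\ref{thm12}: both $\cB_n/\mathrm{Ker}(\Phi)\cong\cB_{n-2}$ (by Corollary~\ref{cor21}) and $\cB_n/\mathrm{Ker}(\mathbf{C}_{\mathcal{L}_1})$ (by Theorem~\ref{thm15}) are $\mathcal{J}_1$-simple quotients of $\cB_n$, so the two kernels coincide. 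Your opening reductions (that $\mathrm{Ker}(\mathbf{C}_{\mathcal{L}_1})$ kills everything touching $\mathtt{0}$ and $\mathtt{n}$, and the dimension bookkeeping for $2$-morphism spaces) are fine as far as they go, but if you want to complete your route you should replace the appeal to category $\mathcal{O}$ by this uniqueness argument --- which is exactly the point of Section~\ref{s5}.
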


\begin{proof}
We have natural surjective homomorphisms of algebras $\mathtt{C}_{n}\tto\mathtt{C}_{n-2}$ given by
forgetting $s_1$ and $s_{n-1}$, evaluating $x_1$ and $x_n$ at $0$, and sending other $x_i$ to $x_{i-1}$.
The gives rise to surjections for all corresponding subalgebras of invariants.
For appropriate choices of the $\mathcal{C}_i$'s this induces a surjective $2$-functor $\Phi:\cB_n\tto \cB_{n-2}$.
By Corollary~\ref{cor21}, $\cB_{n-2}$ is $\mathcal{J}_0$-simple (for its index $0$ that corresponds to index $1$
for the category $\cB_n$), which implies that the $2$-ideal $\mathrm{Ker}(\mathbf{C}_{\mathcal{L}_{1}})$
of $\cB_n$ coincides with $\mathrm{Ker}(\Phi)$. The claim follows.
\end{proof}

\begin{remark}\label{rem23}
{\em 
The ideal $\mathrm{Ker}(\mathbf{C}_{\mathcal{L}_{1}})$ obviously contains the $2$-morphism $\mathrm{id}_{\mathbbm{1}_{\mathtt{0}}}$ and hence also $\mathrm{id}_{\mathrm{F}}$ for any $\mathrm{F}$ in the
$2$-sided cell of $\mathbbm{1}_{\mathtt{0}}$. However, it is easy to check that 
$\mathrm{Ker}(\mathbf{C}_{\mathcal{L}_{1}})$ is not generated by $\mathrm{id}_{\mathbbm{1}_{\mathtt{0}}}$ in general.
}
\end{remark}

\subsection{$2$-Schur algebra}\label{s6.3}

Let $n,r\in \mathbb{N}$. Consider the Lie algebra $\mathfrak{gl}_{r}$ with a fixed standard triangular decomposition
$\mathfrak{n}_{-}\oplus \mathfrak{h}\oplus \mathfrak{n}_{+}$. For $\lambda\in\mathfrak{h}^*$ let $M(\lambda)$ denote
the Verma modules with highest weight $\lambda-\rho$, where $\rho$ is the half of the sum of all positive roots.
For a dominant $\lambda\in\mathfrak{h}^*$ let $\mathcal{O}_{\lambda}$ be the block of the BGG category $\mathcal{O}$
(\cite{BGG}) associated with this triangular decomposition, containing $M(\lambda)$, see \cite{Ma} for details.

We fix in $\mathfrak{h}^*$ the basis, dual to the basis of $\mathfrak{h}$ consisting of matrix units. Using this
basis, we identify $\mathfrak{h}^*$ with $\mathbb{C}^r$. Denote by $\mathtt{N}$ the set of all vectors
$\mathbf{v}:=(v_1,v_2,\dots,v_r)\in \mathbb{C}^r$ such that $v_i\in\{1,2,\dots,n\}$ for all $i$. Let 
$\mathtt{N}_d$ denote the subset of $\mathtt{N}$ consisting of all {\em dominant} vectors, that is all 
$(v_1,v_2,\dots,v_r)$ such that $v_1\geq v_2\geq\dots\geq v_r$.

Consider the $2$-category $\cD_{n,r}$ defined via its defining representation as follows: objects of 
$\cD_{n,r}$ are $\mathbf{v}\in \mathtt{N}_d$, which we identify with some small category 
$\mathcal{Q}_{\mathbf{v}}$, equivalent to ${\mathcal{O}}_{\mathbf{v}}$; $1$-morphisms are projective
functors in the sense of \cite{BG} and $2$-morphisms are natural transformations of functors. This is a fiat
category, see \cite[7.2]{MM}. By \cite[Section~9]{MS2}, the decategorification of the defining representation
of $\cD_{n,r}$ gives the defining representation of the Schur algebra $S(n,r)$ (see also \cite{MS3}). 
This allows us to consider $\cD_{n,r}$ as a categorification (a $2$-analogue) of $S(n,r)$ and here we 
describe the combinatorics of $\cD_{n,r}$ in more details. We refer the reader to \cite{Mathas} for more details
on the Schur algebra associated to the symmetric group.

For $\mathbf{v}\in \mathtt{N}_d$ let $\mathtt{N}_{\mathbf{v}}$ and $S_{\mathbf{v}}$ denote the orbit of 
$\mathbf{v}$ under the action of the symmetric group $S_r$ and the stabilizer of $\mathbf{v}$ with respect to this
action, respectively. By \cite{BG}, for $\mathbf{v},\mathbf{u}\in \mathtt{N}_d$ indecomposable projective functors 
from $\mathcal{Q}_{\mathbf{v}}$ to $\mathcal{Q}_{\mathbf{u}}$ correspond  bijectively to 
{\em $\mathbf{v}$-antidominant} elements in $\mathtt{N}_{\mathbf{u}}$ (that is $\mathbf{x}\in \mathtt{N}_{\mathbf{u}}$ 
such that for any $i\in\{1,2,\dots,r-1\}$ the equality $v_i=v_{i+1}$ implies the inequality $x_i\leq x_{i+1}$),
alternatively, to double cosets $S_{\mathbf{v}}\hspace{-1mm}\setminus S_r/S_{\mathbf{u}}$.  Moreover, by \cite{BG} 
the decategorifications of indecomposable projective functors are linearly independent. As the indexing set 
for indecomposable projective functors
coincides with the indexing set of the standard basis for $S(n,r)$, we conclude that the decategorification 
of $\cD_{n,r}$ is isomorphic to $S(n,r)$. The distinguished basis of $S(n,r)$ corresponding to indecomposable
projective functors, given by this construction, is Du's canonical basis of $S(n,r)$ defined in \cite{Du}.

The Robinson-Schensted-Knuth algorithm (see \cite{Kn}) provides a bijection between the set
\begin{displaymath}
\{(\mathbf{v},\mathbf{u}):\mathbf{v},\mathbf{u}\in \mathtt{N}_d, \mathbf{u}\text{ is }\mathbf{v}\text{-antidominant}\} 
\end{displaymath}
(which indexes indecomposable projective functors) and the set of all pairs $(\alpha,\beta)$ of semistandard 
Young tableaux of the same shape and content from $\{1,2,\dots,n\}$. For an indecomposable $1$-morphism $\theta$ in
$\cD_{n,r}$ we denote by $(\alpha_{\theta},\beta_{\theta})$ the corresponding pair of semistandard Young tableaux.

\begin{theorem}\label{thm423}
Let $\theta$ and $\theta'$ be two indecomposable $1$-morphisms in $\cD_{n,r}$.
\begin{enumerate}[$($i$)$]
\item\label{thm423.1} We have $\theta\sim_{\mathcal{L}}\theta'$ if and only if $\alpha_{\theta}=\alpha_{\theta'}$.
\item\label{thm423.2} We have $\theta\sim_{\mathcal{R}}\theta'$ if and only if $\beta_{\theta}=\beta_{\theta'}$.
\item\label{thm423.3} All two-sided cells in $\cD_{n,r}$ are strongly regular and satisfy \eqref{eq2}.
\end{enumerate}
\end{theorem}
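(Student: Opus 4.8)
\textbf{Proof plan for Theorem~\ref{thm423}.}
The strategy is to transport the cell combinatorics of $\cD_{n,r}$ to the combinatorics of the Schur algebra $S(n,r)$ via the decategorification, and there invoke the known theory of Kazhdan--Lusztig cells for symmetric groups together with the Robinson--Schensted--Knuth correspondence. First I would recall that the defining representation of $\cD_{n,r}$ decategorifies to the defining representation of $S(n,r)$, and that indecomposable $1$-morphisms are indexed by pairs $(\mathbf{v},\mathbf{u})$ with $\mathbf{u}$ being $\mathbf{v}$-antidominant, equivalently by triples (double coset in $S_{\mathbf{v}}\setminus S_r/S_{\mathbf{u}}$), equivalently by pairs of semistandard Young tableaux of the same shape and content via RSK. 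The first point to establish is that the left (resp.\ right) order $\leq_L$ (resp.\ $\leq_R$) on indecomposable $1$-morphisms of $\cD_{n,r}$, defined by summands of horizontal composites $\mathrm{H}\circ\theta$ (resp.\ $\theta\circ\mathrm{H}$), matches the corresponding order on Du's canonical basis of $S(n,r)$. This follows because horizontal composition of projective functors decategorifies to multiplication in $S(n,r)$ and, by \cite{BG}, the classes of indecomposable projective functors are linearly independent with nonnegative structure constants, so a canonical-basis element appears in a product precisely when the corresponding $1$-morphism appears as a summand.

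Next I would bring in the link between $S(n,r)$ and the Hecke algebra / symmetric group side. Realizing $\cD_{n,r}$ through projective functors on blocks $\mathcal{O}_{\mathbf{v}}$ of category $\mathcal{O}$ for $\mathfrak{gl}_r$, one knows that projective functors on singular blocks are obtained from those on the regular block $\mathcal{O}_{\mathbf{0}}$ by translation on and off walls; on the regular block the indecomposable projective functors are indexed by $W=S_r$ and their cell structure is exactly the Kazhdan--Lusztig cell structure of $S_r$ (this is the Soergel-bimodule picture, cf.\ Example~\ref{ex2}). The combinatorics of translating to and from walls corresponding to parabolic subgroups $S_{\mathbf{v}}$ is governed by taking minimal/maximal double-coset representatives, and under this dictionary left and right KL-cells on the $S_r$ side correspond, by the classical theorem of Kazhdan--Lusztig relating cells to RSK, to fixing the recording (resp.\ insertion) tableau. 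Tracking a $\mathbf{v}$-antidominant element $\mathbf{u}\in\mathtt{N}_{\mathbf{u}}$ through RSK for the pair $(\mathbf{v},\mathbf{u})$ and comparing with RSK for the double coset shows that $\theta\sim_L\theta'$ iff $\alpha_\theta=\alpha_{\theta'}$ and $\theta\sim_R\theta'$ iff $\beta_\theta=\beta_{\theta'}$, giving parts \eqref{thm423.1} and \eqref{thm423.2}. For \eqref{thm423.3}, two $1$-morphisms lie in the same two-sided cell iff $(\alpha_\theta,\beta_\theta)$ and $(\alpha_{\theta'},\beta_{\theta'})$ have the same shape; strong regularity is then the statement that left cells in a two-sided cell are incomparable (the distinct recording tableaux of a fixed shape are $\leq_L$-incomparable, which transports from the analogous KL fact) and that the intersection of a left cell $\{\alpha\text{ fixed}\}$ and a right cell $\{\beta\text{ fixed}\}$ inside a fixed shape is the single element $(\alpha,\beta)$, which is immediate from the bijectivity of RSK.

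Finally, condition \eqref{eq2} asks that $\mathrm{F}\mapsto m_{\mathrm{F}}$ be constant on right cells, where $m_{\mathrm{F}}$ is the multiplicity of the unique element $\mathrm{H}$ in (left cell of $\mathrm{F})\cap$(right cell of $\mathrm{F}^*$) inside $\mathrm{F}^*\circ\mathrm{F}$. Here I would use that $*$ is given by the adjoint functor, which on the $\mathbf{v},\mathbf{u}$ indexing simply swaps the two entries (equivalently swaps the two tableaux), so $\mathrm{H}$ is the ``diagonal'' element with insertion and recording tableau both equal to $\alpha_{\mathrm{F}^*}=\beta_{\mathrm{F}}$; the relevant multiplicity can then be computed as $\dim\mathrm{Hom}$ between the corresponding projective functors, or more simply via the structure constants of Du's canonical basis, and one checks directly (using that translation functors behave uniformly along a fixed wall pattern) that this number depends only on the right cell of $\mathrm{F}$. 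The main obstacle I anticipate is precisely this last transport-of-structure step: carefully matching the three indexing schemes (antidominant vectors, double cosets, tableau pairs) with the translation-functor combinatorics on singular blocks of $\mathcal{O}$, so that the known KL cell theorem for $S_r$ yields the stated tableau criteria without sign or left/right-convention errors; once that dictionary is pinned down, strong regularity and \eqref{eq2} are short consequences of bijectivity of RSK and uniformity of translation functors.
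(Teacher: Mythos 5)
Your proposal is correct and follows essentially the same route as the paper: the paper's proof is a one-line reduction to the regular block of $\mathcal{O}$ (where the cell structure is the Kazhdan--Lusztig/RSK combinatorics of $S_r$ already treated in \cite[7.1]{MM}) by translating out of the source wall and back onto the target wall, which is exactly the translation-on-and-off-walls dictionary you describe in detail. Your additional decategorification step via Du's canonical basis is not needed in the paper's argument but is consistent with it.
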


\begin{proof}
All claims reduce to the corresponding statements for the regular block of $\mathcal{O}$, see \cite[7.1]{MM}, by 
first translating out of the source wall of the projective functor and then back onto the target wall.
\end{proof}

\subsection{Image completion}\label{s6.2}

Let $\cC$ be a fiat category and $\mathbf{M}\in \cC\text{-}\mathrm{mod}$. Define a new $2$-category
$\cA$ as follows: objects of $\cA$ are the same as objects of $\cC$; for $\mathtt{i},\mathtt{j}\in\cA$,
$1$-morphisms in $\cA(\mathtt{i},\mathtt{j})$ are all functors from $\mathbf{M}(\mathtt{i})$ to
$\mathbf{M}(\mathtt{j})$, isomorphic to $\mathbf{M}(\mathrm{F})$, where 
$\mathrm{F}\in \cC(\mathtt{i},\mathtt{j})$; $2$-morphisms in
$\cA$ are all natural transformations of functors; the composition in $\cA$ is given by the usual
composition of functors. The $2$-category $\cA$ will be called the {\em completion} of 
$\mathbf{M}(\cC)$.

\begin{proposition}\label{prop77}
The $2$-category $\cA$ is fiat.
\end{proposition}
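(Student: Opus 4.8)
The plan is to verify, one by one, the defining conditions of a fiat category for $\cA$, transporting each property from $\cC$ via the exact $2$-representation $\mathbf{M}$. Since objects of $\cA$ coincide with those of $\cC$, finiteness of the set of objects is immediate. For each pair $\mathtt{i},\mathtt{j}$, the category $\cA(\mathtt{i},\mathtt{j})$ is the full subcategory of the functor category $[\mathbf{M}(\mathtt{i}),\mathbf{M}(\mathtt{j})]$ on those functors isomorphic to some $\mathbf{M}(\mathrm{F})$; I would first argue that this is a fully additive $\Bbbk$-linear category with finitely many indecomposables up to isomorphism. Additivity and $\Bbbk$-linearity are inherited from the target functor category; for the finiteness of indecomposables, the key point is that $\mathbf{M}(-)$ is an additive functor, so $\mathbf{M}$ sends a direct sum decomposition of $\mathrm{F}$ to one of $\mathbf{M}(\mathrm{F})$, and since $\cC$ is fiat it has finitely many indecomposable $1$-morphisms up to isomorphism, whence so does $\cA$ (the indecomposable objects of $\cA(\mathtt{i},\mathtt{j})$ are summands of the $\mathbf{M}(\mathrm{F})$, hence images of indecomposable summands of the various $\mathrm{F}$). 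That the $\mathrm{Hom}$-spaces of $2$-morphisms in $\cA$ are finite dimensional requires a short argument: a priori $\mathrm{Hom}$ in a functor category can be large, but because the objects are (up to isomorphism) the $\mathbf{M}(\mathrm{F})$ and $\mathbf{M}$ is exact, natural transformations $\mathbf{M}(\mathrm{F})\to\mathbf{M}(\mathrm{G})$ are controlled by their values on projective generators of the abelian categories $\mathbf{M}(\mathtt{i})$ — here I would invoke Theorem~\ref{thm9}, which says $\mathbf{M}$ is equivalent to the abelianization of $\mathbf{M}_{pr}$, reducing the computation of natural transformations to the finitary additive $2$-representation $\mathbf{M}_{pr}$, where the relevant $\mathrm{Hom}$-spaces are finite dimensional.

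Next I would address the additivity and $\Bbbk$-linearity of horizontal composition in $\cA$: composition in $\cA$ is honest composition of functors and horizontal composition of natural transformations, which is bilinear and additive because each $\mathbf{M}(\mathrm{F})$, being (biadjoint to) an exact functor on an abelian category, is additive. Indecomposability of the identity $1$-morphisms $\mathbbm{1}_{\mathtt{i}}$ of $\cA$: the identity $1$-morphism of $\cA(\mathtt{i},\mathtt{i})$ is $\mathbf{M}(\mathbbm{1}_{\mathtt{i}}) = \mathrm{Id}_{\mathbf{M}(\mathtt{i})}$, and its endomorphism algebra is a quotient of $\mathrm{End}_{\cC}(\mathbbm{1}_{\mathtt{i}})$, which is local; a nonzero quotient of a local algebra is local, so $\mathrm{Id}_{\mathbf{M}(\mathtt{i})}$ is indecomposable provided $\mathbf{M}(\mathtt{i})\neq 0$, which I would build into the setup (or note that otherwise one simply discards that object).

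For the involution and adjunction morphisms: the weak object-preserving involutive anti-autoequivalence $*$ on $\cC$ yields, for each $1$-morphism $\mathrm{F}$, an adjoint-type $1$-morphism $\mathrm{F}^*$ with adjunction $2$-morphisms $\alpha:\mathrm{F}\circ\mathrm{F}^*\to\mathbbm{1}$ and $\beta:\mathbbm{1}\to\mathrm{F}^*\circ\mathrm{F}$ satisfying the zig-zag identities. Applying the $2$-functor $\mathbf{M}$, we get functors $\mathbf{M}(\mathrm{F})$, $\mathbf{M}(\mathrm{F}^*)$ and natural transformations $\mathbf{M}(\alpha)$, $\mathbf{M}(\beta)$ still satisfying the zig-zag identities (a $2$-functor preserves such diagrams), so $\mathbf{M}(\mathrm{F}^*)$ is both left and right adjoint to $\mathbf{M}(\mathrm{F})$ inside $\cA$; defining $(-)^*$ on $\cA$ by $\mathbf{M}(\mathrm{F})^* := \mathbf{M}(\mathrm{F}^*)$ gives the required anti-autoequivalence, and one checks it is well defined on isomorphism classes, involutive, and object-preserving using the corresponding facts for $\cC$. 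The main obstacle I anticipate is the well-definedness and finiteness issue for $2$-morphism spaces in $\cA$: different $1$-morphisms $\mathrm{F},\mathrm{F}'$ of $\cC$ can have isomorphic images $\mathbf{M}(\mathrm{F})\cong\mathbf{M}(\mathrm{F}')$, so $\cA(\mathtt{i},\mathtt{j})$ is genuinely a quotient-type construction and one must check that the transported structure ($*$, adjunctions, additivity of composition) does not depend on the chosen preimage $1$-morphism, and that $\mathrm{Hom}_{\cA}$ is finite dimensional — the latter being where the reduction via Theorem~\ref{thm9} to the finitary $\mathbf{M}_{pr}$ does the real work, since natural transformations between the restrictions of the $\mathbf{M}(\mathrm{F})$ to the projective subcategory form a subquotient of finite-dimensional $\mathrm{Hom}$-spaces.
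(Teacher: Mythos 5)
There is a genuine gap, and it is precisely the point flagged in Remark~\ref{rem78}: since $2$-morphisms in $\cA$ are \emph{all} natural transformations, not just images of $2$-morphisms of $\cC$, the endomorphism algebra of $\mathbf{M}(\mathrm{F})$ in $\cA$ can acquire new idempotents, so an indecomposable $\mathrm{F}$ in $\cC$ may have $\mathbf{M}(\mathrm{F})$ decomposable in $\cA$ with summands that are not isomorphic to $\mathbf{M}(\mathrm{G})$ for any $\mathrm{G}$. Your claim that ``the indecomposable objects of $\cA(\mathtt{i},\mathtt{j})$ are summands of the $\mathbf{M}(\mathrm{F})$, hence images of indecomposable summands of the various $\mathrm{F}$'' is therefore false, and it infects two of your verifications. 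First, finiteness of the set of indecomposable $1$-morphisms of $\cA$ does not follow from finiteness in $\cC$ by your argument; the paper instead uses that each $\mathbf{M}(\mathrm{F})$ is exact (hence, on module categories of finite dimensional algebras, has finite dimensional endomorphism ring as a functor), so Krull--Schmidt gives finitely many summands of each $\mathbf{M}(\mathrm{F})$, and finiteness follows from that. Second, and more seriously, your construction of the involution and the adjunction morphisms, namely setting $\mathbf{M}(\mathrm{F})^*:=\mathbf{M}(\mathrm{F}^*)$ and pushing forward $\alpha,\beta$ through the $2$-functor, only equips the $1$-morphisms lying in the image of $\mathbf{M}$ with adjunction data; it says nothing about the genuinely new indecomposable summands, which must carry adjunction data for $\cA$ to be fiat (and must lie in $\cA$ at all for full additivity). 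The paper's proof closes exactly this gap by invoking the standard fact (\cite[1.2(c)]{BG}) that if a functor $\mathrm{X}$ has a biadjoint $\mathrm{Y}$, then every direct summand of $\mathrm{X}$ has a biadjoint which is a direct summand of $\mathrm{Y}$; this simultaneously defines $*$ on the new indecomposables and produces their adjunction morphisms.

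Your remaining verifications are essentially fine: the reduction of finite dimensionality of $2$-morphism spaces to the subcategories of projectives is legitimate (the $\mathbf{M}(\mathrm{F})$ are right exact, so natural transformations between them are determined on projectives), though heavier than needed, and the locality argument for $\mathrm{id}_{\mathbbm{1}_{\mathtt{i}}}$ and the well-definedness of $*$ on isomorphism classes via uniqueness of biadjoints are correct. But without the summand-biadjoint fact the proof does not go through.
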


\begin{proof}
Since $\mathbf{M}\in \cC\text{-}\mathrm{mod}$ and each $\mathbf{M}(\mathrm{F})$ is exact, 
each $\mathbf{M}(\mathrm{F})$ decomposes into a finite number of indecomposable functors. 
This implies that $\cA$ has finitely many indecomposable
$1$-morphisms (up to isomorphism). Clearly, all spaces of $2$-morphisms are finite dimensional
over $\Bbbk$. Now the claim follows from the standard fact (see e.g. \cite[1.2(c)]{BG}) 
that if some functor $\mathrm{X}:\mathbf{M}(\mathtt{i})\to \mathbf{M}(\mathtt{j})$ has a biadjoint 
$\mathrm{Y}:\mathbf{M}(\mathtt{j})\to \mathbf{M}(\mathtt{i})$, then every direct summand of
$\mathrm{X}$ has a biadjoint which is a direct summand of $\mathrm{Y}$.
\end{proof}

\begin{remark}\label{rem78}
{\rm
There is no obvious relation between $\cA$ and $\cC$. For instance, $\cA$ can have many more
$2$-morphisms, in particular, new idempotent $2$-morphisms, and thus many more indecomposable
$1$-morphisms. The other extreme is that we can map different $1$-morphisms to the same thing
making $2$-morphisms composable in $\cA$ which were not composable in $\cC$.   
}
\end{remark}

\vspace{1cm}

\noindent
Volodymyr Mazorchuk, Department of Mathematics, Uppsala University,
Box 480, 751 06, Uppsala, SWEDEN, {\tt mazor\symbol{64}math.uu.se};
http://www.math.uu.se/$\tilde{\hspace{1mm}}$mazor/.
\vspace{0.1cm}

\noindent
Vanessa Miemietz, School of Mathematics, University of East Anglia,
Norwich, UK, NR4 7TJ, {\tt v.miemietz\symbol{64}uea.ac.uk};
http://www.uea.ac.uk/$\tilde{\hspace{1mm}}$byr09xgu/.


\begin{thebibliography}{999999}
\bibitem[Ag]{Ag} T.~Agerholm; Simple $2$-representations and classification of categorifications. PhD Thesis,
Aarhus University, 2011.
\bibitem[BFK]{BFK} J.~Bernstein, I.~Frenkel, M.~Khovanov; A categorification of the Temperley-Lieb algebra and Schur 
quotients of $U(\mathfrak{sl}_2)$ via projective and Zuckerman functors. Selecta Math. (N.S.) {\bf 5} (1999), no. 
2, 199--241. 
\bibitem[BGG]{BGG} J.~Bernstein, I.~Gelfand, S.~Gelfand; A certain category of ${\mathfrak g}$-modules. 
Funkcional. Anal. i Prilo{\v z}en. {\bf 10} (1976), no. 2, 1--8. 
\bibitem[BG]{BG} J.~Bernstein, S.~Gelfand; Tensor products of 
finite- and infinite-dimensional representations of semisimple Lie 
algebras.  Compositio Math.  {\bf 41}  (1980), no. 2, 245--285. 
\bibitem[CR]{CR} J.~Chuang, R.~Rouquier; Derived equivalences for 
symmetric groups and $\mathfrak{sl}_2$-ca\-te\-go\-ri\-fi\-ca\-ti\-on. 
Ann. of Math.  (2) {\bf 167} (2008), no. 1, 245--298. 
\bibitem[Du]{Du} J.~Du; Kazhdan-Lusztig bases and isomorphism theorems for $q$-Schur algebras. 
Kazhdan-Lusztig theory and related topics (Chicago, IL, 1989), 121--140,
Contemp. Math., {\bf 139}, Amer. Math. Soc., Providence, RI, 1992.
\bibitem[FKS]{FKS} I.~Frenkel, M.~Khovanov, C.~Stroppel; A categorification of finite-dimensional irreducible 
representations of quantum $\mathfrak{sl}_2$ and their tensor products. Selecta Math. (N.S.) {\bf 12} (2006), 
no. 3-4, 379--431.
\bibitem[Gr]{Gr} J.~Green; On the structure of semigroups. Ann. of Math. (2) {\bf 54}, (1951). 163--172. 
\bibitem[Hi]{Hi} H.~Hiller; Geometry of Coxeter groups. Research Notes in Mathematics, {\bf 54}. Pitman 
(Advanced Publishing Program), Boston, Mass.--London, 1982.
\bibitem[Hu]{Hu} J.~Humphreys; Representations of semisimple Lie 
algebras in the BGG category $\mathcal{O}$. Graduate Studies in 
Mathematics, {\bf 94}. American Mathematical Society, Providence, RI, 2008.
\bibitem[Kn]{Kn} D.~Knuth; Permutations, matrices, and generalized Young tableaux. Pacific J. Math. {\bf 234} 
(1970), 709--727. 
\bibitem[La]{La} A.~Lauda; A categorification of quantum ${\mathfrak sl}(2)$. Adv. Math. {\bf 225} (2010), no. 6, 
3327--3424.
\bibitem[McL]{McL} S.~Mac Lane; Categories for the working mathematician. 
Second edition. Graduate Texts in Mathematics, {\bf 5}. Springer-Verlag, New York, 1998.
\bibitem[Mat]{Mathas} A.~Mathas; Iwahori-Hecke algebras and Schur algebras of the 
symmetric group. University Lecture Series, {\bf 15}. American Mathematical Society, Providence, RI, 1999.
\bibitem[Ma]{Ma} V.~Mazorchuk; Lectures on algebraic categorification,
Preprint arXiv:1011.0144, to appear in {\AA}rhus lecture notes series.
\bibitem[MM]{MM} V.~Mazorchuk, V.~Miemietz; Cell $2$-representations of finitary
$2$-categories; Compositio Math. {\bf 147} (2011), 1519--1545.
\bibitem[MS1]{MS2} V.~Mazorchuk, C.~Stroppel; Categorification of 
(induced) cell modules and the rough structure of generalised 
Verma modules. Adv. Math. {\bf 219} (2008), no. 4, 1363--1426. 
\bibitem[MS2]{MS3} V.~Mazorchuk, C.~Stroppel; A combinatorial approach 
to functorial quantum $\mathfrak{sl}_k$ knot invariants. Amer. J. Math. 
{\bf 131} (2009), no. 6, 1679--1713.
\bibitem[Os]{Os} V.~Ostrik; Tensor ideals in the category of 
tilting modules.  Transform. Groups  {\bf 2}  (1997),  no. 3, 279--287. 
\bibitem[Ro1]{Ro0} R.~Rouquier; Categorification of the 
braid groups, Preprint arXiv:math/0409593. 
\bibitem[Ro2]{Ro} R.~Rouquier; 2-Kac-Moody algebras. Preprint
arXiv:0812.5023. 
\bibitem[So]{So} W.~Soergel; The combinatorics of Harish-Chandra bimodules. 
J. Reine Angew. Math. {\bf 429} (1992), 49--74. 
\bibitem[Vi]{Vi} O.~Viro; Hyperfields for Tropical Geometry I. Hyperfields and 
dequantization. Preprint arXiv:1006.3034.
\end{thebibliography}
\end{document}